\documentclass[mnsc,nonblindrev]{informs3_hide}
\OneAndAHalfSpacedXI 

\usepackage[english]{babel}
\usepackage[autostyle, english = american]{csquotes}
\MakeOuterQuote{"}
\usepackage[colorlinks,linkcolor=blue, citecolor=blue]{hyperref}
\usepackage[normalem]{ulem}

\usepackage{cleveref}
\crefname{subsection}{section}{subsections}
\usepackage{eqnarray}
\usepackage{algorithm,algpseudocode}
\usepackage{amsmath, bbm, enumitem, xparse, bm, lipsum}
\usepackage{amssymb}

\newcommand{\eps}{\epsilon}
\newcommand{\veps}{\varepsilon}

\newcommand{\prim}{{\mathrm{Prime}}}
\newcommand{\dual}{{\mathrm{Dual}}}
\newcommand{\calA}{{\mathcal{A}}}

\newcommand{\calX}{{\mathcal{X}}}

\newcommand{\calF}{{\mathcal{F}}}
\newcommand{\calI}{{\mathcal{I}}}
\newcommand{\calJ}{{\mathcal{J}}}

\newcommand{\calH}{{\mathcal{H}}}

\newcommand{\calY}{{\mathcal{Y}}}
\newcommand{\order}{\mathcal{O}}
\newcommand{\otil}{\widetilde{\mathcal{O}}}
\newcommand{\E}{\mathbb{E}}
\newcommand{\R}{\mathbb{R}}
\newcommand{\wt}{\widetilde}

\newcommand{\Rad}{\mathrm{Rad}}

\newcommand{\VI}{V^{\mathrm{Prime}}}
\newcommand{\hatVII}{\wh{V}^{\mathrm{Prime}}_{\calI}}
\newcommand{\VD}{V^{\mathrm{Dual}}}

\newcommand{\bv}{\bm{v}}
\newcommand{\by}{\bm{y}}

\newcommand{\ba}{\bm{a}}
\newcommand{\bh}{\bm{h}}

\newcommand{\bmu}{\bm{\mu}}

\newcommand{\bx}{\bm{x}}

\newcommand{\wh}[1]{\widehat{#1}}

\usepackage[dvipsnames]{xcolor}
\newcommand{\Authornote}[3]{{\leavevmode\color{#2}\sf\small$<${#1: #3}$>$}}

\newcommand{\mznote}[1]{\Authornote{MZ}{cyan}{#1}}

\usepackage{xparse}

\NewDocumentEnvironment{myproof}{o}
{\IfNoValueTF{#1}{\paragraph{{Proof.} }} {\paragraph{{#1.} }} }
{\hfill$\Halmos$}

\usepackage{natbib,bbm,multirow,multicol}
 \bibpunct[, ]{(}{)}{,}{a}{}{,}
 \def\bibfont{\small}

\TheoremsNumberedThrough
\ECRepeatTheorems

\EquationsNumberedThrough    
\allowdisplaybreaks

\begin{document}

\RUNAUTHOR{Jiang, Zhang}

\RUNTITLE{Instance-dependent Guarantees for Bilinear Saddle Point Problem}

\TITLE{
\Large An LP-Based Approach for Bilinear Saddle Point Problem with Instance-dependent Guarantee and Noisy Feedback
}

\ARTICLEAUTHORS{%
\AUTHOR{$\text{Jiashuo Jiang}^{\dag}$, ~~$\text{Mengxiao Zhang}^{\ddag}$}

\AFF{$\dag$~ Department of Industrial Engineering and Decision Analytics, Hong Kong University of Science and Technology  \\
$\ddag~$Department of Business Analytics, Tippie College of Business, University of Iowa
}
}

\ABSTRACT{
In this work, we study the sample complexity of obtaining a Nash equilibrium (NE) estimate in two-player zero-sum matrix games with noisy feedback. Specifically, we propose a novel algorithm that repeatedly solves linear programs (LPs) to obtain an NE estimate with bias at most $\veps$ with a sample complexity of $\order\left(\frac{m_1 m_2}{\veps\min\{\delta^2,\sigma_0^2,\sigma^3\}} \log\frac{m_1 m_2}{\veps}\right)$
for general $m_1 \times m_2$ game matrices, where $\sigma$, $\sigma_0$, $\delta$ are some problem-dependent constants. To our knowledge, this is the first instance-dependent sample complexity bound for finding an NE estimate with $\veps$ bias in general-dimension matrix games with noisy feedback and potentially non-unique equilibria. Our algorithm builds on recent advances in online resource allocation and operates in two stages: (1) identifying the support set of an NE, and (2) computing the unique NE restricted to this support. Both stages rely on a careful analysis of LP solutions derived from noisy samples. 
}

\KEYWORDS{Learning in Games, Minimax Optimization, Linear Program, Instance-dependent Guarantees}

\maketitle

\section{Introduction}\label{sec: intro}

Minimax optimization~\citep{v1928theorie} is a fundamental problem in machine learning and game theory, with applications including adversarial training~\citep{goodfellow2014explaining} and robust optimization~\citep{rahimian2019distributionally}. A canonical example of a minimax problem is the two-player zero-sum matrix game, also known as the bilinear saddle-point problem, defined as $\min_{\bx}\max_{\by}\bx^\top A\by$, where $A$ is the payoff matrix, and $(\bx,\by)$ are the strategies of the two players lying within the simplex. The goal is to compute a \emph{Nash Equilibrium (NE)}, a strategy pair in which neither player can improve their expected payoff by deviating from their strategy.  

When the payoff matrix $A$ is fully known, computing an NE reduces to solving a pair of primal and dual linear programs or an equivalent saddle-point formulation. This full-information setting is polynomial-time solvable and well understood: one can compute the game value and an equilibrium strategy profile by solving an LP. When the game is of large scale, iterative approaches become essential to ensure an efficient computation of an NE. A typical approach is via {regret minimization}, where both players employ no-regret learning algorithms to sequentially update their strategies. Specifically, \citet{freund1999adaptive} shows that if two players adopts algorithms such as Hedge or Online Gradient Descent (OGD), the average iterate converges to an $\order(1/\sqrt{T})$-approximate NE, where $T$ is the total number of iterates. Later, there is a surge of works~\citep{daskalakis2011near,rakhlin2013online,syrgkanis2015fast} showing that with an optimistic variant of these algorithms, the convergence rate can be improved to $\mathcal{O}(1/T)$. 

Compared to the full information setting, a much harder learning environment is when each player picks an action and only receives a \emph{noisy} observation of $A$ on the \emph{chosen action pair}. This indeed includes many real-world applications which we discuss as follows.



\paragraph{Application 1: Dueling bandit and preference learning}

Dueling bandits~\citep{yue2012k} are a class of online learning problems that naturally form a two-player zero-sum game, in which the expected utility matrix can be formed as $A\in[0,1]^{m_1\times m_2}$ with $m_1=m_2$. Each time, the learner selects two actions $i,j\in[m_1]$ to duel and observes a binary outcome indicating which action is preferred in that comparison. Specifically, the probability of action $i$ winning over action $j$ is represented by an unknown payoff matrix entry $A_{ij}$, and binary feedback is stochastic under this underlying preference.

Dueling bandits in fact appear in many diverse real-world applications. For instance, in clinical trials, two treatments can be compared by presenting subsets of patients with either treatment and observing binary preference feedback, efficiently determining the superior option with minimal patient exposure~\citep{sui2018advancements}. In recommendation systems, users presented with pairs of products or movies provide binary preference feedback, enabling quick convergence to preferred items with fewer comparative tests~\citep{zoghi2014relative}. Similarly, search engine rankings optimized via dueling comparisons can efficiently identify preferable result rankings from noisy user click data. Another important application is in online A/B testing~\citep{o2021matrix,quin2024b} with limited feedback, commonly used in e-commerce platforms. Instead of measuring absolute performance of individual variants, the platform compares two website layouts, pricing schemes, or advertisement creatives and receive only a binary signal. This dueling setup allows for more sample-efficient learning and avoids assumptions on cardinal rewards, making it suitable for high-traffic, real-time systems with noisy preference data.
A more recent and significant example of dueling bandits is Reinforcement Learning with Human Feedback (RLHF), widely used in fine-tuning large language models, where human evaluators indicate preferences between two model-generated outputs. These pairwise comparisons form the noisy binary feedback and guide the model to learn a policy that aligns with human judgment~\citep{ouyang2022training}.


\paragraph{Application 2: Market making in financial markets}
Financial market making provides another significant application for two-player zero-sum games under noisy and bandit feedback~\citep{vayanos2001strategic,varsos2023noisy}. Market making involves two opposing players: the market maker, who provides liquidity by quoting bid and ask prices, and traders who seek to exploit profitable trades by selecting optimal trade execution strategies. Each player's actions lead to uncertain outcomes influenced by stochastic price movements, liquidity variations, and unpredictable external events. The market maker aims to minimize losses and maintain profitability, while traders strive to maximize their gains through informed or strategic trading. Learning optimal strategies through limited, noisy observations is essential for achieving competitive advantages in these adversarial financial interactions.

\paragraph{Application 3: Pricing and bidding competition in revenue management} {Zero-sum matrix games are useful tools in revenue management for modeling competitive or adversarial decision-making situations. For example, suppose there are two hotels, located in the same area, competing for the same group of customers. Each hotel wants to set a room price from a set of pricing strategies in order to compete with the other hotel's pricing strategy. This is a classic case of strategic decision-making and can be modeled as a matrix game (see for example \citep{baum1995empirical, dev1989strategic}), where each entry of the game matrix represents the possible outcome based on the pricing choices of both hotels. The outcome can be stochastic due to uncertain customer demand, which introduces noise when we make observations over matrix entries. Such a matrix game formulation helps hotels optimize strategically over pricing in a competitive market.}


\paragraph{Application 4: Attacker-defender games in blockchain security}
Beyond dueling bandits, blockchain security provides another compelling domain for learning Nash equilibria under noisy, bandit feedback. Blockchain networks can often be modeled as two-player zero-sum games between attackers, aiming to disrupt or compromise the system, and defenders, striving to maintain the integrity and security of the network. Actions taken by attackers, such as launching double-spending attacks, mining pool attacks, or manipulating consensus mechanisms, result in probabilistic outcomes influenced by factors like network latency, node honesty, and random block validation processes. Defenders, conversely, select protective strategies such as dynamic difficulty adjustments, node validation schemes, or network partitioning techniques, which similarly yield uncertain results due to inherent system stochasticity and evolving adversarial capabilities. Learning effective strategies under such noisy bandit feedback, where only outcomes of specific attacks or defenses are observable, is crucial for robust blockchain operation and resilience against sophisticated adversaries.

Despite its wide applications, less is known about learning NE under this more challenging bandit and noisy feedback environment.
\citet{maiti2023instance} considers this problem with unique NE when $m_2=2$ and studies two different notions of approximate NE. Specifically, they show that when $m_1=2$, with $\wt{\Theta}(\min\{\frac{1}{\veps^2},\max\{\frac{1}{\Delta_{\min}^2},\frac{1}{\veps D}\}\})$ samples, an $\veps$-good NE can be computed with probability at least $1-\delta$ where $\Delta_{\min}$ and $D$ are some problem-dependent constants and $\veps$-good NE means that the game value corresponding to the obtained strategy is at most $\veps$ away from the minimax value. For a stronger notion of $\veps$-sub-optimality-gap NE, they show that a worse minimax sample complexity bound of $\wt{\Theta}(\min\{\frac{1}{\veps^2},\max\{\frac{1}{\Delta_{\min}^2},\frac{c}{\veps^2 D^2}\}\})$ where $c$ is another problem-dependent constant. \footnote{We use $\otil(\cdot)$ and $\widetilde{\Theta}(\cdot)$ to suppress all logarithmic dependencies. \citet{maiti2023instance} uses the notion of $\epsilon$-NE in their paper. We use the notion of $\veps$-sub-optimality-gap NE in order to match the definition used in this paper. See \Cref{sec:pre} for definitions.} When $m_1>2$, \citet{maiti2023instance} further proposes an algorithm finding the NE supported on $2$ actions for the row player, reducing the problem to the $2\times 2$ case. Recently, \citet{ito2025instance} shows that when the problem has a unique pure Nash Equilibrium (PSNE), if two players both apply Tsallis-INF~\citep{JMLR:v22:19-753}, the exact PSNE can be computed using $\order(c'\log(1/\eps))$ samples with probability at least $1-\eps$ for some problem-dependent constant $c'$.\footnote{Throughout the paper, $\veps$ denotes the bias (error) from the NE, while $\eps$ denotes the probability of a rare event.}

While \citet{maiti2023instance} has shown that $\wt{\Omega}(\min\{\frac{1}{\veps^2},\frac{c}{\veps^2D^2}\})$ instance-dependent sample complexity is unavoidable in order to achieve $\veps$-sub-optimality-gap NE in general, it remains unclear whether a better instance-dependent sample complexity can be achieved if we consider finding an estimate of NE with $\veps$ bias, which is a weaker notion of NE compared to what has been considered previously. Therefore, this paper aims to investigate the following question

\begin{center}
    \emph{How to achieve better instance-dependent sample complexity for computing an estimate of Nash Equilibrium with $\veps$ bias in two-player zero-sum matrix games with noisy and bandit feedback?}
\end{center}

\subsection{Main Results and Contributions}
Inspired by the recent advance of online resource allocation~(e.g. \cite{vera2021bayesian, li2022online, jiang2022degeneracy, ma2024optimal}), we design a practically efficient algorithm based on resolving linear programs (LP) achieving an estimate NE with $\veps$ bias with sample complexity $\order\left(\frac{m_1 m_2}{\veps\min\{\delta^2,\sigma_0^2,\sigma^3\}} \log\frac{m_1 m_2}{\veps}\right)$, where $\delta$, $\sigma$, $\sigma_0$ are problem-dependent constants.\footnote{All problem-dependent constants are formally defined in later sections.} 

Specifically, in \Cref{sec:support-identification}, we introduce our first component, which is to identify the support of an NE. Specifically, we first sample each entry of matrix $N/m$ times and construct an empirical LP formulation of the game. Our algorithm then iteratively computes the estimated support of the NE by restricting the strategy to progressively smaller supports. Concretely, at each step, we select an entry from the current support and compare the objective value of the empirical LP in two cases: one where the strategy remains unrestricted and one where the selected entry is removed. If the two objective values are sufficiently close, this indicates that the selected entry can be safely removed while still ensuring the existence of an NE. Otherwise, we terminate the process and output the current support. With a high probability, this guarantees to find the support of an NE when $N$ exceeds certain constant related to some instance-dependent constant $\delta>0$ defined in \Cref{sec:instance-dependent-id}. {We also develop the corresponding suboptimality gap of the identified support given a finite sample size $N$, which will be useful to derive a $\delta$-independent guarantee for our algorithm.} 

After obtaining the support of an NE, we show in \Cref{sec: sample_complexity} how to find the NE on this support. To achieve an instance-dependent sample complexity, our algorithm iteratively solves the LP based on the historical samples with an adaptive choice of the constraint slackness, which is inspired by the recent advances in solving online resource allocation that achieves logarithmic regret via LP resolving~(e.g. \cite{vera2021bayesian, li2022online, jiang2022degeneracy, ma2024optimal}). We first show in \Cref{sec: instance-dependent} our algorithm achieves a instance-dependent sample complexity of $\order(\frac{m}{\delta^2\veps}\log\frac{m}{\delta\veps})$, then complementing with an $\order(\frac{m}{\veps^2}\log\frac{m}{\veps})$ $\delta$-independent sample complexity guarantee in \Cref{sec: instance-independent}.\footnote{We omit some other problem-dependent constants for conciseness. These dependencies are explicitly written in the theorem statement.} Combining both results, we show our final sample complexity guarantee in \Cref{sec:Combine}.



\subsection{Other Related Works}
The literature on learning in game is broad. There is a line of works considering the sample complexity of finding NE for matrix games. When the observations are deterministic, \citet{maiti2023query} proposed an inefficient algorithm that identifies all Nash Equilibria by querying \(\mathcal{O}(nk^5\log^2n)\) entries of the matrix, where \( k \) denotes the size of the support across all Nash Equilibria and $A\in \R^{n\times n}$. For Pure Strategy Nash Equilibria (PSNE), a special class of NE where each player's strategy is restricted to a single action, \citet{dallant2024finding} developed a deterministic and efficient algorithm requiring only \(\mathcal{O}(n)\) samples. This was subsequently improved by \citet{dallant2024optimal}, who introduced a randomized algorithm that further reduced the runtime complexity while maintaining the same sample complexity.

When the observations are noisy, \citet{zhou2017identify} proposed a lower-upper-confidence-bound (LUCB)-based algorithm~\citep{kalyanakrishnan2012pac} for identifying PSNE with probability at least $1-\epsilon$, achieving a sample complexity of $\mathcal{O}(\log(1/\epsilon))$. \citet{maiti2024near} later improved this result by refining problem-dependent constants to achieve better sample efficiency. When the game does not necessarily have a PSNE, when $m_1=m_2=2$, \citet{maiti2023instance} shows an $\otil(\min\{\frac{1}{\veps^2},\max\{\frac{1}{\Delta_{\min}^2},\frac{c}{\veps^2 D^2}\}\}\cdot\log(1/\epsilon))$ sample complexity to find an $\veps$-sub-optimality-gap NE with probability at least $1-\epsilon$, together with a matching lower bound up to logarithmic factors. However, we remark that this does not violate our $\otil(\frac{1}{\veps})$ sample complexity bound, since we aim to find an estimate of NE with $\veps$ bias instead of finding an $\epsilon$-sub-optimality-gap NE with high probability. Transforming our result to an $\epsilon$-sub-optimality-gap NE with high probability requires multiple runs of our algorithm.


There is another related line of work which aims to find the NE of a matrix game via regret minimization. Specifically, \citep{freund1999adaptive} shows that if both players adopt an algorithm with individual regret bound $R$ over $T$ rounds, then the average-iterate for each player forms an $\frac{R}{T}$-approximate NE. Since $R=\Theta(\sqrt{T})$ in the worst case, this leads to an $\mathcal{O}(1/\sqrt{T})$-approximate NE. Later, a line of works \citep{daskalakis2011near,rakhlin2013optimization,syrgkanis2015fast} show that with full gradient feedback, if both players apply certain optimistic online learning algorithms, the average-iterate converges to NE with a faster rate. A recent work \citep{ito2025instance} shows that under noisy bandit feedback, if both players apply Tsallis-INF~\citep{JMLR:v22:19-753}, then the expected average-iterate converges to an $\mathcal{\widetilde{O}}(1/\sqrt{T})$-Nash Equilibrium, with a faster $\order(\log T/T)$ instance-dependent rate if the game has a unique PSNE. Besides average-iterate convergence, there are also works studying the last-iterate convergence \citep{daskalakis2018last,wei2020linear,hsieh2021adaptive,anagnostides2022last,cai2022tight,cai2024fast,ito2025instance}, which is more preferable in many applications.  There are also works considering broader games beyond normal form games including convex-concave games~\citep{abernethy2018faster}, strongly monotone games~\citep{ba2025doubly,jordan2025adaptive}, and extensive-form games~\citep{farina2019online,lee2021last}.

Our algorithm design is inspired by the recent advances in near-optimal algorithms for the online resource allocation problem, which has been extensively studied, with diverse applications modeled through various linear program (LP) formulations. Representative examples include the secretary problem \citep{ferguson1989solved}, online knapsack \citep{arlotto2020logarithmic}, network revenue management \citep{gallego1997multiproduct}, network routing \citep{buchbinder2009online}, and online matching \citep{mehta2007adwords}. 
Two primary input models are commonly considered in online LP research: i) the stochastic model, where each constraint column and objective coefficient is drawn independently from an unknown distribution, and ii) the random permutation model, where inputs arrive in a uniformly random order \citep{molinaro2014geometry, agrawal2014dynamic, kesselheim2014primal, gupta2014experts}.
Under a standard non-degeneracy assumption, logarithmic regret bounds have been established for problems such as quantity-based network revenue management \citep{jasin2012re, jasin2014reoptimization}, general online LPs \citep{li2022online}, and convex resource allocation \citep{ma2024optimal}. More recent work has relaxed this assumption (e.g. \cite{bumpensanti2020re, vera2021bayesian, banerjee2024good, jiang2022degeneracy, wei2023constant, jiang2024achieving, ao2025learning, xie2025benefits}), leading to improved theoretical guarantees under broader and more realistic conditions. Our algorithm is new compared to the existing ones and we also relax the non-degeneracy assumption.
\section{Preliminary}\label{sec:pre}
We consider the bilinear saddle point problem defined as follows
\begin{equation}\label{eqn:Formulation}
    \min_{\bm{x}\in\mathcal{X}=\Delta_{m_1}} \max_{\bm{y}\in\mathcal{Y}=\Delta_{m_2}} \bx^\top A\by,
\end{equation}
where $\Delta_n=\{\bx\in\R^n: \sum_{i=1}^n\bx_i=1,\bx_i\geq 0,\forall i\in[n]\}$ denotes the $(n-1)$-dimensional simplex and $A\in[-1,1]^{m_1\times m_2}$ is the payoff matrix. According to the celebrated minimax theorem~\citep{v1928theorie}, we know that $\min_{\bx\in\calX}\max_{\by\in\calY}\bx^\top A\by = \max_{\by\in\calY}\min_{\bx\in\calX}\bx^\top A\by$. Define $\calX^*\times\calY^*$ as the set of Nash Equilibria where $\calX^*=\argmin_{\bx\in\calX}\max_{\by\in\calY}\bx^\top A\by$ and $\calY^*=\argmax_{\by\in\calY}\min_{\bx\in\calX}\bx^\top A\by$.
In the following, we introduce two definitions that measure of the closeness between a strategy $(x,y)$ and the Nash Equilibria set $\calX^*\times\calY^*$.

\begin{definition}\label{def:eps-NE}
    We call a pair of strategy $(\bx,\by)\in\Delta_{m_1}\times\Delta_{m_2}$ an \emph{$\veps$-close NE} if $\argmin_{\bx^*\in\calX^*}\|\bx-\bx^*\|_2\leq \veps$ and $\argmin_{\by^*\in\calY}\|\by-\by^*\|_2\leq \veps$.
\end{definition}

\begin{definition}\label{def:eps-duality} Define the sub-optimality gap of $\bx\in\calX$ and $\by\in\calY$ as $\max_{\by'\in\Delta_{m_2}}\bx^\top A\by' - \max_{\by'\in\Delta_{m_2}}{x^{*\top}} A\by'$ and $\min_{\bx'\in\Delta_{m_1}}\bx{'^\top} A\by^* - \min_{\bx'\in\Delta_{m_1}}{x^{'\top}} A\by$ (independent fo the choice of $x^*\in\calX^*$ and $y^*\in\calY$).
    We call $\bx\in\calX$ ($\by\in\calY$) an $\veps$-sub-optimality-gap NE if $\bx$'s ($\by$'s) sub-optimality gap is no more than $\veps$.
\end{definition}
Since we assume that $A\in[-1,1]^{m_1\times m_2}$, we know that an $\veps$-close NE implies an $\veps$-sub-optimality-gap NE, while the reverse is not true. 

In this paper, we consider the setting where only noisy bandit feedback is available. Specifically, the learner can only query an entry $(i,j)$ of $A$ and observe a sample $A_{i,j}+\eta\in[-1,1]$ where $\eta$ is a zero-mean noise.

\paragraph{Other notations} 
For a vector $\bv \in \mathbb{R}^d$, let $v_i$ denote its $i$-th entry, and for a subset $S \subseteq [d]$, let $\bv_S$ denote the sub-vector of $\bv$ containing the entries indexed by $S$. Denote $\mathbf{e}^{n}$ as the all-one vector in $n$-dimensional space and $\mathbf{0}$ as the all-zero vector in an appropriate dimension. 
We also denote 
$\bm{h}_{n}$ as the one-hot vector with the $n$-th position being $1$ and other positions being $0$ in an appropriate dimension. Given two vectors $\mathbf{u},\bv\in \R^d$, we say $\mathbf{u} \succeq \bv$ ($\mathbf{u}\preceq \bv$) if $u_i \geq v_i$ ($u_i\leq v_i$) for all $i \in [d]$.
For a matrix $M \in \mathbb{R}^{n_1 \times n_2}$ and two sets $S_1 \subseteq [n_1]$, $S_2 \subseteq [n_2]$, let $M_{S_1,:} \in \mathbb{R}^{|S_1| \times n_2}$ denote the sub-matrix containing the rows of $M$ indexed by $S_1$. Similarly, let $M_{:,S_2} \in \mathbb{R}^{n_1 \times |S_2|}$ denote the sub-matrix containing the columns of $M$ indexed by $S_2$, and let $M_{S_1, S_2} \in \mathbb{R}^{|S_1| \times |S_2|}$ denote the one containing the entries with the row indices in $S_1$ and the column indices in $S_2$. We also let $m=m_1m_2$ for notational convenience.

\subsection{Reformulation as Linear Programming}

In this section, we present a reformulation of the bilinear saddle-point problem defined in \Cref{eqn:Formulation}. We show that in order to solve \Cref{eqn:Formulation}, we can focus on two linear programs, which can be regarded as the primal-dual form to each other. We then exploit the specific structures of the linear programming reformulations to derive our algorithms. 
Specifically, for a fixed $\bx$, the inner maximization problem over $\by$ can be viewed as a linear program, the dual of which can be written as 
\begin{equation}
\min_{\mu\in\mathbb{R}}  ~\mu  ~~~\mbox{s.t.} ~\mu\cdot\bm{e}^{m_2}\succeq A^\top\bx.    
\end{equation}
Further minimizing over $\bx\in\mathcal{X}$ reaches the following ``primal'' linear program.\footnote{With a slight abuse of notation, we use the same symbol to denote both the LP and its optimal objective, depending on the context.}
\begin{equation}\label{eqn:primal}
    \VI=\min_{\bx\succeq0, \mu\in\mathbb{R}}~ \mu ~~~\mbox{s.t.}~ \mu\cdot\bm{e}^{m_2}\succeq A^\top\bx, ~(\bm{e}^{m_1})^\top\bx=1.
\end{equation}
According to the celebrated minimax theorem, solving the problem in \Cref{eqn:Formulation} is equivalent to solving $\max_{\by\in\calY}\min_{\bx\in\calX}\bx^\top A\by$. Then, following a similar process, we obtain the ``dual'' linear programming that computes $\by$.
\begin{equation}\label{eqn:dual}
    \VD=\max_{\by\succeq0, \nu\in \R} ~\nu  ~~~\mbox{s.t.}~\nu\cdot\bm{e}^{m_1} \preceq A\by, ~(\bm{e}^{m_2})^\top\by=1.
\end{equation}
Direct calculation shows that LP in \Cref{eqn:primal} and LP in \Cref{eqn:dual} are primal-dual to each other. Moreover, in order to solve \Cref{eqn:Formulation}, classic LP analysis (e.g. Theorem 16.5 in \citep{lecture16}) shows that it suffices to find an optimal solution $\bx^*$ to LP \Cref{eqn:primal} and an optimal solution $\by^*$ to LP \Cref{eqn:dual}, such that $\bx^*$ and $\by^*$ are the corresponding primal-dual solutions. 
\begin{lemma}\label{lem:LPformulation}
For any optimal solution to the primal LP \Cref{eqn:primal}, denoted by $(\bx^*, \bmu^*)$, and the corresponding optimal dual solution to LP \Cref{eqn:dual}, denoted by $(\by^*, \bm{\nu}^*)$,  $(\bx^*, \by^*)$ is an optimal solution to \Cref{eqn:Formulation}.
\end{lemma}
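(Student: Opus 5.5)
The plan is to show directly that $(\bx^*,\by^*)$ is a saddle point of $\bx^\top A\by$ over $\calX\times\calY$. This uses only the feasibility of the two optimal solutions and strong LP duality between \Cref{eqn:primal} and \Cref{eqn:dual}. Once that is done, membership $\bx^*\in\calX^*$ and $\by^*\in\calY^*$ follows from the minimax theorem.

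\textbf{Step 1 (duality).} Both LPs are feasible: any $\bx\in\Delta_{m_1}$ with $\mu=\max_j (A^\top\bx)_j$ is feasible for the primal, and similarly for the dual. Both objectives are bounded, since $A\in[-1,1]^{m_1\times m_2}$ forces $\mu\ge -1$ and $\nu\le 1$. Strong duality therefore gives $\mu^*=\nu^*$, where $\mu^*,\nu^*$ denote the optimal objective values.

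\textbf{Step 2 (sandwich).} Primal feasibility means $A^\top\bx^*\preceq \mu^*\bm{e}^{m_2}$. Hence for every $\by\in\Delta_{m_2}$,
\[
\bx^{*\top}A\by\le \mu^*\,(\bm{e}^{m_2})^\top\by=\mu^*.
\]
Dual feasibility means $A\by^*\succeq \nu^*\bm{e}^{m_1}$. Hence for every $\bx\in\Delta_{m_1}$,
\[
\bx^\top A\by^*\ge \nu^*.
\]
Evaluating both inequalities at the pair $(\bx^*,\by^*)$ gives $\nu^*\le \bx^{*\top}A\by^*\le\mu^*$, so all three quantities coincide by Step 1. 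It follows that
\[
\bx^\top A\by^*\ \ge\ \bx^{*\top}A\by^*\ \ge\ \bx^{*\top}A\by \qquad \text{for all } \bx\in\calX,\ \by\in\calY,
\]
which is the saddle-point property.

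\textbf{Step 3 (conclusion).} From Step 2,
\[
\max_{\by}\bx^{*\top}A\by=\mu^*=\min_{\bx}\max_{\by}\bx^\top A\by .
\]
The second equality holds because, for any $\bx\in\Delta_{m_1}$, the pair $(\bx,\max_j(A^\top\bx)_j)$ is feasible for \Cref{eqn:primal}, so $\mu^*\le\max_{\by}\bx^\top A\by$. Thus $\bx^*\in\calX^*$. Symmetrically, $\min_{\bx}\bx^\top A\by^*=\nu^*=\max_{\by}\min_{\bx}\bx^\top A\by$, so $\by^*\in\calY^*$. Hence $(\bx^*,\by^*)$ solves \Cref{eqn:Formulation}.

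The only point needing care is the phrase ``corresponding optimal dual solution.'' I read it as any optimal solution of \Cref{eqn:dual}, since the argument above never couples the two beyond equality of optimal values. So there is no real obstacle; the key step is Step 1's appeal to strong duality, which needs feasibility and boundedness, both immediate here.
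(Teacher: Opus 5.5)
Your proof is correct and follows the paper's approach. The paper gives no argument of its own and simply invokes classical LP duality (Theorem 16.5 of the cited lecture notes); your strong-duality-plus-feasibility sandwich is that classical argument written out, and your reading of ``corresponding'' as any optimal dual solution is fine, since the saddle-point set is the product $\calX^*\times\calY^*$.
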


In the following, we first present in \Cref{sec:support-identification} a method for identifying the support of an NE, using $\otil(\frac{m}{\delta^2})$ samples, where $\delta>0$ is a problem-dependent constant defined in \Cref{sec:instance-dependent-id}. Then, in \Cref{sec: sample_complexity}, we demonstrate how to compute the NE supported on the identified support.

\section{Saddle Point Support Identification}\label{sec:support-identification}

In this section, we develop methods to identify the \emph{support} for an optimal solution $\bx^*$ to the primal LP \Cref{eqn:primal}. 
Following standard LP theory, we know that when solving an LP, we can restrict to the corner points of the feasible region, which is also referred to as \textit{basic solution}. 
The following lemma shows that there always exists two index sets, denoted as $\mathcal{I}^{*'}$ and $\mathcal{J}^{*'}$ such that one optimal solution to the LP $\VI$ can be fully characterized by the sets $\mathcal{I}^{*'}$ and $\mathcal{J}^{*'}$ as the solution to a set of linear equations.
\begin{theorem}\label{lem:Basis}
There exists an index set $\mathcal{I}^{*'}\subset [m_1]$ and an index set $\mathcal{J}^{*'}\subset [m_2]$, such that $|\mathcal{I}^{*'}|=|\mathcal{J}^{*'}|=d$, for some integer $d\leq \min\{ m_1, m_2 \}$. Also, for the given sets $\mathcal{I}^{*'}$ and $\mathcal{J}^{*'}$, there exists an optimal solution $(\bm{x}^*, \mu^*)$ to the LP in \Cref{eqn:primal} {such that $(\bm{x}^*, \mu^*)$ is the unique solution to the following linear system}
\begin{equation}\label{eqn:Lsystem1}
    A_{\mathcal{I}^{*'}, \mathcal{J}^{*'}}^\top \bm{x}^*_{\mathcal{I}^{*'}} = \mu^*\cdot\bm{e}^{|\mathcal{J}^{*'}|}\text{~~and~~}
    (\bm{e}^{m_1})^\top\bx^* = 1,
\end{equation}
and
$\bx^*_{(\mathcal{I}^{*'})^c} = \bm{0}$,
where $(\mathcal{I}'^{*'})^{c}=[m_1]\setminus\mathcal{I}^{*'}$, and $(\mathcal{J}^{*'})^{c}=[m_2]\setminus\mathcal{J}^{*'}$ denote the complementary sets.
\end{theorem}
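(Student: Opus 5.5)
We will take a basic optimal solution of the primal LP \Cref{eqn:primal} and read off the two index sets from its basis. Write \Cref{eqn:primal} in standard form with slack variables $\bm{s}\succeq 0$:
\[
A^\top\bx+\bm{s}=\mu\,\bm{e}^{m_2},\qquad (\bm{e}^{m_1})^\top\bx=1 .
\]
The free variable $\mu$ can be written as the difference of two nonnegative variables, or handled directly, since $\mu$ is bounded on the feasible region ($A\in[-1,1]^{m_1\times m_2}$ gives $\mu\geq -1$ at any feasible point). The feasible region is nonempty and the objective is bounded below, so an optimal solution exists. Moreover, the polyhedron contains no line: $\bx$ lives in the simplex, and $\bs$ is determined by $\bx$ and $\mu$. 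Hence an optimal vertex (basic feasible solution) $(\bx^*,\mu^*,\bs^*)$ exists.

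At a vertex, the active constraints have full column rank in the variables $(\bx,\mu)$. Let
\[
\calI_0=\{i:x^*_i>0\},\qquad \calJ_0=\{j:(A^\top\bx^*)_j=\mu^*\}.
\]
Being a vertex of $\{(\bx,\mu):\bx\succeq 0,\ A^\top\bx\preceq\mu\bm{e},\ \bm{e}^\top\bx=1\}$ means the following: $(\bx^*,\mu^*)$ is the unique solution of the system
\[
x_i=0\ (i\notin\calI_0),\qquad A_{:,j}^\top\bx=\mu\ (j\in\calJ_0),\qquad \bm{e}^\top\bx=1 .
\]
Equivalently, the $(|\calJ_0|+1)\times(|\calI_0|+1)$ matrix
\[
M=\begin{pmatrix} A_{\calI_0,\calJ_0}^\top & -\bm{e}\\ (\bm{e}^{|\calI_0|})^\top & 0\end{pmatrix}
\]
has full column rank $|\calI_0|+1$. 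So $|\calJ_0|\geq|\calI_0|$, and we may choose a subset $\calJ^{*'}\subseteq\calJ_0$ with $|\calJ^{*'}|=|\calI_0|$ such that the row-submatrix of $M$ indexed by $\calJ^{*'}$ together with the last row is square and nonsingular. We then set $\calI^{*'}=\calI_0$ and $d=|\calI_0|$. Since $\calJ^{*'}\subseteq[m_2]$, we get $d\leq m_1$ and $d\leq m_2$.

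With this choice, $(\bx^*,\mu^*)$ satisfies \Cref{eqn:Lsystem1} together with $\bx^*_{(\calI^{*'})^c}=\bm 0$. Uniqueness follows because, once $\bx_{(\calI^{*'})^c}=\bm 0$ is imposed, the remaining system in $(\bx_{\calI^{*'}},\mu)$ has the nonsingular square coefficient matrix just selected.

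The main obstacle is the row-selection step, which guarantees that we can pick exactly $d$ columns of $A$ while keeping $M$ square and invertible. This needs two things.
\begin{itemize}
\item \textbf{The normalization row must be retained.} Suppose the full-rank square submatrix could only be formed by dropping the row $\bm{e}^\top\bx=1$. That row is an equality constraint of the LP, so we may always include it first. We then extend it to a basis of the row space using rows from $\calJ_0$; this extension is possible by the basis exchange property, because the full matrix $M$ has rank $|\calI_0|+1$.
\item \textbf{Degeneracy must be handled.} If $\bx^*$ is degenerate ($|\calJ_0|$ larger than needed), the selection is simply not unique, which is harmless.
\end{itemize}
A minor subtlety is the case $d=0$. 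This case is impossible, because $\bm{e}^\top\bx^*=1$ forces $\calI_0\neq\emptyset$, so $d\geq1$.
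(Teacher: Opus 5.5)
Your proof is correct and follows essentially the same route as the paper's. You take a basic (vertex) optimal solution, let $\calI^{*'}$ be its positive support, note that the binding-constraint matrix together with the normalization row has full column rank $|\calI^{*'}|+1$, and extract a nonsingular square row-submatrix that keeps the normalization row. Two of your steps are tidier versions of the paper's: working with the active constraints directly in $(\bx,\mu)$-space replaces its detour through the simplex basis of the slack-variable standard form, and extending the nonzero normalization row to a basis of the row space using rows from your binding set $\calJ_0$ replaces the exchange argument in the paper's Claim.
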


In order to identify the optimal basis $\mathcal{I}'$ and $\mathcal{J}'$ from noisy observations, we first sample each entry $A_{i,j}$ a number of times and construct an empirical game matrix $\wh{A}$ with each entry the empirical average of the $N/m$ observations, and construct the primal LP of \Cref{eqn:primal} using $\wh{A}$. Next, we successively shrink the size of $x$-player's support based on the objective value $\wh{V}_{\calI}^{\prim}$ defined in \Cref{eqn:Izero}. Specifically, $\wh{V}_{\calI}^{\prim}$ is defined as the empirical primal LP with non-zero values only on $\calI$ and we initialize the support set to be $\calI=[m_1]$. At each time, we sequentially pick each $i\in\calI$ and compare the objective value of $\wh{V}_{\calI}^{\prim}$ with $\wh{V}_{\calI\backslash\{i\}}^{\prim}$. If these two quantities are the same, this means that there exists an NE that is not supported on $i$ and we set $\calI\leftarrow\calI\backslash\{i\}$; otherwise, we keep $i$ in $\calI$. In this way, we obtain a support set $\wh{\calI}^*$ after iterating over all $i\in[m_1]$.

\begin{equation}\label{eqn:Izero}
\begin{aligned}
\VI_{\mathcal{I}}=& \min&&\mu && \wh{V}^{\mathrm{Prime}}_{\mathcal{I}}= && \min&&\mu \\
&~~ \mbox{s.t.} &&\mu\cdot\bm{e}^{m_2}\succeq A^\top\bm{x} &&  &&~~\mbox{s.t.} &&\mu\cdot\bm{e}^{m_2}\succeq\wh{A}^\top\bm{x}\\
& &&(\bm{e}^{m_1})^\top\bm{x}=1 && && &&(\bm{e}^{m_1})^\top\bm{x}=1\\
& &&\bm{x}_{\mathcal{I}^c}=\mathbf{0} && && &&\bm{x}_{\mathcal{I}^c}=\mathbf{0}\\
& &&\bm{x}\succeq\mathbf{0}, \mu\in\mathbb{R}, && && &&\bm{x}\succeq\mathbf{0}, \mu\in\mathbb{R}.
\end{aligned}
\end{equation}

After identifying the set $\wh{\calI}^*$, we also need to find the set $\wh{\calJ}^*$, which can then be used to solve the linear equation in \Cref{eqn:Lsystem1} to compute the optimal solution $\bx^*$. Specifically, based on $\wh{\calI}^*$ computed in the previous phase, we define the dual LP $\wh{V}_{\wh{\calI}^*,\calJ}^{\dual}$ in \Cref{eqn:Jzero} (as well as the dual LP based on true matrix given by $\VD_{\wh{\mathcal{I}}^*, \mathcal{J}}$ in \Cref{eqn:Jzero}) in which we only include the rows of $\wh{A}$ whose indices are in $\wh{\calI}^*$ and set the dual variables outside $\calJ$ to be $0$. 
We apply a similar approach to find the active constraints for the support $\wh{\calI}^*$. Specifically,  we initialize $\calJ=[m_2]$ and successively eliminate an element $j\in\calJ$ if $\wh{V}_{\wh{\calI}^*,\calJ\backslash\{j\}}$ equals to $\wh{V}_{\wh{\calI}^*, \calJ}^{\dual}$ and $(\wh{\calI}^*, \calJ\backslash\{j\})$ contains a basis, which is equivalent to checking whether the matrix $\begin{bmatrix}
\wh{A}_{\wh{\calI}^*, \calJ\backslash\{j\}} & -\bm{e}^{|\calI|} \\
(\bm{e}^{|\calJ|-1})^\top & 0
\end{bmatrix}$ is full rank or not. 
Finally, we obtain $\wh{\calJ}^*$ after iterating over all $j\in[m_2]$, where we stop as long as $|\wh{\calJ}^*| = |\wh{\calI}^*|$. The pseudo code of the algorithm is shown in \Cref{alg:Idenbasis}. Note that it is possible that there are multiple optimal basis and we show later that our approach essentially
identifies one particular optimal basis, since our goal is to find one NE of the game. In the following, we provide both instance-dependent and worst-case sample complexity guarantees.

\begin{equation}\label{eqn:Jzero}
\begin{aligned}
\VD_{\wh{\mathcal{I}}^*, \mathcal{J}}=& \max&&\nu && \wh{V}^{\mathrm{Dual}}_{\wh{\mathcal{I}}^*, \mathcal{J}}= && \max&&\nu \\
&~~ \mbox{s.t.} &&\nu\cdot\bm{e}^{|\wh{\mathcal{I}}^*|}\preceq A_{\wh{\mathcal{I}}^*, :}\bm{y} &&  &&~~\mbox{s.t.} &&\nu\cdot\bm{e}^{|\wh{\mathcal{I}}^*|}\preceq\wh{A}_{\wh{\mathcal{I}}^*, :}\bm{y}\\
& &&(\bm{e}^{m_2})^\top\by=1 && && &&(\bm{e}^{m_2})^\top\by=1 \\
& &&\bm{y}_{\mathcal{J}^c}=\mathbf{0} && && &&\bm{y}_{\mathcal{J}^c}=\mathbf{0}\\
& &&\bm{y}\succeq0, \nu\in\mathbb{R}, && && &&\bm{y}\succeq0, \nu\in\mathbb{R}.
\end{aligned}
\end{equation}

\begin{algorithm}[ht!]
\caption{Support Identification}
\label{alg:Idenbasis}
\begin{algorithmic}[1]
\State \textbf{Input:} 
the failure probability $\eps$ and the matrix $\wh{A}$ with each entry being the sample average of $N'$ samples.

\State Denote by $\mathcal{I}_0$ the row index set of $\wh{A}$ and $\mathcal{J}_0$ the column index set of $\wh{A}$.

\State Initialize $\mathcal{I}$ as $\mathcal{I}_0$ and $\calJ$ as $\mathcal{J}_0$.

\State Compute the objective value of $\wh{V}^{\mathrm{Prime}}_{\calI}$ as in \Cref{eqn:Izero}. Set this value to be $V$.
\For{$i\in\mathcal{I}_0$}
\State Let $\mathcal{I}'=\mathcal{I}\backslash\{i\}$ and compute the value of $\wh{V}^{\mathrm{Prime}}_{\mathcal{I}'}$.
\If{$V=\wh{V}^{\mathrm{Prime}}_{\mathcal{I}'}$} 
\State $\mathcal{I}=\mathcal{I}'.$
\EndIf
\EndFor
\For{$j\in\mathcal{J}_0$}
\State Let $\mathcal{J}'=\mathcal{J} \backslash\{j\}$ and compute the value of $\wh{V}^{\mathrm{Dual}}_{\mathcal{I}, \mathcal{J}'}$
\State Compute the smallest singular value of the matrix $\begin{bmatrix}
\wh{A}_{\calI, \calJ'} & -\bm{e}^{|\calI|} \\
(\bm{e}^{|\calJ'|})^\top & 0
\end{bmatrix}$, denoted as $\wh{\sigma}_{\calI, \calJ'}$.

\If{$V=\wh{V}^{\mathrm{Dual}}_{\mathcal{I}, \mathcal{J}'}$ and $\wh{\sigma}_{\calI, \calJ'}> |\calI||\calJ'|\cdot\Rad(N'/m, \eps/m)$ with $\Rad(N'/m, \eps/m)=\sqrt{\frac{m\cdot\log(2m/\eps)}{2N'}}$.} 
\State $\mathcal{J}=\mathcal{J}'.$
\EndIf
\State \textbf{Break} if $|\calJ| = |\calI|$.
\EndFor
\State \textbf{Output}: the sets of indices $\wh{\mathcal{I}}^*=\mathcal{I}$ and $\wh{\mathcal{J}}^*=\mathcal{J}$.
\end{algorithmic}
\end{algorithm}

\subsection{Instance-Dependent Constant Sample Complexity Guarantee}\label{sec:instance-dependent-id}

In this section, we show that \Cref{alg:Idenbasis} can successfully detect one optimal support index sets with a high probability with the number of samples a constant with respect to certain instance-dependent quantities. Specifically, we define $\delta>0$ as follows.
\begin{definition}\label{def:delta}
    Define $$\delta_1\triangleq\min_{\mathcal{I}\subseteq[m_1]}\{\VI_{\mathcal{I}}-\VI: \VI_{\mathcal{I}}-\VI>0\}$$ to be the minimum non-zero primal gap where $\VI$ is defined in \Cref{eqn:primal} and $\VI_{\mathcal{I}}$ is defined in \Cref{eqn:Izero}. Define $$\delta_2\triangleq\min_{\mathcal{I}, \mathcal{J}}\{\VD_{\mathcal{I},[m_2]} - \VD_{\mathcal{I}, \mathcal{J}}: \VD_{\mathcal{I},[m_2]} - \VD_{\mathcal{I}, \mathcal{J}} >0, \VD_{\mathcal{I},[m_2]}=\VI\}$$ to be the minimum non-zero dual gap to the optimal objective where $\VD_{\mathcal{I}, \mathcal{J}}$ is defined in~\Cref{eqn:Jzero}. Define $\delta \triangleq \min\{\delta_1,\delta_2\}$.
\end{definition}
Specifically, $\delta$ measures the minimum non-zero gap to the optimal objective values (for both the primal and the dual) when the value on a support of the NE is set to be $0$.

Since we need to check the full-rankness of a matrix in line 14 of \Cref{alg:Idenbasis}, we also need a parameter reflecting the smallest possible singular value. We denote by $\mathcal{F}$ the collection of all possible support $(\calI, \calJ)$ that could potentially form a basis of $\VI$. To be specific, $\mathcal{F}$ contains all $(\calI, \calJ)\subset[m_1]\times[m_2]$ such that the matrix $
\begin{bmatrix}
A^\top_{\calI, \calJ} & -\bm{e}^{|\calJ|} \\
(\bm{e}^{|\calI|})^\top & 0
\end{bmatrix}$ is full-rank. Formally, we define \begin{align}\label{eqn:F}
    \calF = \left\{(\calI,\calJ)\in[m_1]\times[m_2]: \begin{bmatrix}
A^\top_{\calI, \calJ} & -\bm{e}^{|\calJ|} \\
(\bm{e}^{|\calI|})^\top & 0
\end{bmatrix} \text{~is full-rank}\right\}.
\end{align}
Then, for any $(\calI, \calJ)\in\mathcal{F}$, we denote by $\sigma_{\calI, \calJ}$ the smallest singular value of the matrix $
\begin{bmatrix}
A^\top_{\calI, \calJ} & -\bm{e}^{|\calJ|} \\
(\bm{e}^{|\calI|})^\top & 0
\end{bmatrix}$. From the definition of $\mathcal{F}$, we know that $\sigma_{\calI, \calJ}>0$ for any $(\calI, \calJ)\in\mathcal{F}$. We
provide the following parameter definition that reflects how hard it is to identify whether the matrix is full-rank or not.
\begin{definition}\label{def:sigma0}
Define
\begin{equation}\label{eqn:Sigma0}
\sigma_0=\min_{(\calI, \calJ)\in\mathcal{F}}\left\{\frac{\sigma_{\calI, \calJ}}{2|\calI||\calJ|}: \sigma_{\calI, \calJ}>0\right\}
\end{equation}
to be the smallest singular value of the matrix $
\begin{bmatrix}
A^\top_{\calI, \calJ} & -\bm{e}^{|\calJ|} \\
(\bm{e}^{|\calI|})^\top & 0
\end{bmatrix}$ for all $(\calI, \calJ)\in\mathcal{F}$.
\end{definition}

The next theorem shows that when we input \Cref{alg:Idenbasis} with an empirical matrix $\wh{A}$ such that the number of samples for each entry is at least $\Omega(\frac{1}{\sigma_0^2}+\frac{1}{\delta^2})$, then with a high probability, $\wh{\calI}^*$ and $\wh{\calJ}^*$ output by \Cref{alg:Idenbasis} satisfies \Cref{lem:Basis}. 

\begin{theorem}\label{thm:Infibasis2}
For any $\eps>0$, and a fixed $N \geq \left(\frac{1}{\sigma_0^2}+\frac{1}{\delta^2} \right)\cdot 2m\log(2m/\epsilon)$, if we input \Cref{alg:Idenbasis} with the failure probability $\eps$ and an empirical matrix $\wh{A}$ with each entry the average of $\frac{N}{m}$ noisy observations, then, with probability at least $1-\eps$, the outputs $\wh{\mathcal{I}}^*$ and $\wh{\mathcal{J}}^*$ of \Cref{alg:Idenbasis} satisfy the conditions described in \Cref{lem:Basis}.
\end{theorem}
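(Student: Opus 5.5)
Conditioned on a single high-probability event, the plan is to show that every comparison in \Cref{alg:Idenbasis} made on the empirical matrix $\wh{A}$ agrees with the same comparison made on the true matrix $A$. Let $\mathcal{E}$ be the event that $|\wh{A}_{ij}-A_{ij}|\le \Rad(N/m,\eps/m)=\sqrt{m\log(2m/\eps)/(2N)}$ for all $(i,j)$. Each entry is an average of $N/m$ bounded observations, so Hoeffding's inequality and a union bound over the $m$ entries give $\Pr(\mathcal{E})\ge 1-\eps$. The assumption $N\ge(\frac{1}{\sigma_0^2}+\frac{1}{\delta^2})2m\log(2m/\eps)$ gives $\Rad\le \frac{1}{2}\min\{\delta,\sigma_0\}$. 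For the value comparisons I would read the test ``$V=\wh V$'' as equality up to a tolerance of order $\Rad$; exact equality is a measure-zero event under noise, and I expect the paper's proof implicitly uses such a threshold.

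\textbf{Step 1: LP values are stable under perturbation.} For any $\calI$, $|\wh{V}^{\prim}_{\calI}-\VI_{\calI}|\le \Rad$. Indeed, for every feasible $\bx\in\Delta$ we have $\max_j |(\wh A^\top \bx)_j-(A^\top\bx)_j|\le \max_{ij}|\wh A_{ij}-A_{ij}|$, and the optimal value is a min over $\bx$ of a max over $j$. The same argument gives $|\wh V^{\dual}_{\calI,\calJ}-\VD_{\calI,\calJ}|\le\Rad$ for the dual.

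\textbf{Step 2: the primal phase.} I would show by induction that the current set $\calI$ always satisfies $\VI_{\calI}=\VI$. Suppose removing $i$ leaves the true value unchanged. Then by Step 1 the two empirical values differ by at most $2\Rad$, and the algorithm removes $i$. Suppose instead the true value strictly increases. By \Cref{def:delta} the increase is at least $\delta_1\ge\delta$, so the empirical values differ by at least $\delta-2\Rad>0$, and $i$ is kept. The exact tolerance is chosen so that these two cases separate. At termination, $\wh\calI^*$ is minimal, meaning that removing any further index strictly increases the value.

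\textbf{Step 3: the dual phase.} Here $\VD_{\wh\calI^*,[m_2]}=\VI$, since $\wh\calI^*$ supports an optimal $\bx$ and LP duality applies. The same $\delta_2$-separation then shows that a column $j$ passes the value test exactly when $\VD_{\wh\calI^*,\calJ\setminus\{j\}}=\VI$. For the rank test, Weyl's inequality bounds the change in singular values of the bordered matrix by its spectral perturbation, which is at most $|\calI||\calJ'|\Rad$ (using the Frobenius norm, or a cruder entrywise bound). If the true matrix is in $\calF$, then $\sigma_{\calI,\calJ'}\ge 2|\calI||\calJ'|\sigma_0$, so $\wh\sigma>|\calI||\calJ'|\Rad$ because $\Rad<\sigma_0$. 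If the true matrix is singular, then $\wh\sigma\le|\calI||\calJ'|\Rad$ and the test fails. So both tests reproduce their population versions exactly.

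\textbf{Step 4: conclusion and main obstacle.} It remains to show the population procedure outputs sets that satisfy \Cref{lem:Basis}, and I expect this to be the main difficulty. Two things must be established. First, the column elimination with full-rank checks must actually reach $|\calJ|=|\wh\calI^*|$. Second, the resulting square bordered system must have the optimal $\bx^*$ (supported on $\wh\calI^*$) as its unique solution. For the first point, I would take the dual optimal $\by^*$ of the restricted LP and argue that a basic optimal dual solution uses at most $|\wh\calI^*|$ columns. Columns outside some optimal basis can therefore be removed without changing the value while preserving full rank. For uniqueness, minimality of $\wh\calI^*$ forces every optimal $\bx$ on $\wh\calI^*$ to have full support. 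Complementary slackness then makes the constraints in $\wh\calJ^*$ tight, and full rank gives uniqueness. Handling degenerate ties, where a column removal preserves the value but breaks basis structure, is the delicate part. I would address it via the rank check together with a careful choice of the removal order.
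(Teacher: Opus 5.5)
There is a genuine gap, and it starts with how you reinterpret the test in line 7 of \Cref{alg:Idenbasis}.

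\textbf{The test is not a measure-zero event.} That test compares two \emph{empirical} values, $\wh V^{\prim}_{\calI}$ and $\wh V^{\prim}_{\calI\setminus\{i\}}$, both computed from the same $\wh A$. They coincide exactly for every $i$ outside the support of some empirical optimal solution on $\calI$. So exact equality is the typical outcome, not a measure-zero event. The paper's proof uses the test exactly as written: it concludes that $\wh\calI^*$ is an optimal support of the empirical LP. Replacing the test with a tolerance means you are analysing a different algorithm.

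\textbf{The tolerance version fails at the stated sample size.} When the true value is unchanged, the two empirical values can differ by up to $2\Rad$. When it increases, they can differ by as little as $\delta-2\Rad$. A threshold guaranteed to separate these two cases therefore needs $\Rad<\delta/4$. The hypothesis $N\ge 2m\log(2m/\eps)/\delta^2$ gives only $\Rad\le\delta/2$.

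\textbf{Step 4 is left open.} Your Step 4 is where the content of \Cref{lem:Basis} actually lies: reaching $|\calJ|=|\wh\calI^*|$ with a uniquely solvable square system. It remains a plan. Choosing the removal order carefully is also not available, because the algorithm fixes the order.

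\textbf{The missing idea: a one-sided transfer.} The empirical run does not need to reproduce the population run. With the exact test, $\wh\calI^*$ deterministically satisfies $\wh V^{\prim}_{\wh\calI^*}=\wh V^{\prim}$. It is also minimal: an index kept early stays non-removable later, because $\wh V^{\prim}_{\calI}$ is monotone in $\calI$. Your Step 1 then gives $\VI_{\wh\calI^*}\le \wh V^{\prim}+\Rad\le \VI+2\Rad$. Any nonzero gap is at least $\delta_1\ge\delta$, so $\VI_{\wh\calI^*}=\VI$ once $2\Rad<\delta$. The same argument with $\delta_2$ gives $\VD_{\wh\calI^*,\wh\calJ^*}=\VD_{\wh\calI^*,[m_2]}$. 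This needs only $\Rad\le\delta/2$, which is why the paper gets the stated $N$.

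\textbf{The cardinality facts are proved on the empirical LP.} No noise enters there.
\begin{itemize}
\item The bound $|\wh\calJ^*|\ge|\wh\calI^*|$ follows from the break rule together with $m_2\ge|\wh\calI^*|$. If $m_2<|\wh\calI^*|$, applying \Cref{lem:Basis} to the rows $\wh\calI^*$ would exhibit a removable row.
\item The bound $|\wh\calJ^*|\le|\wh\calI^*|$ is by contradiction. Applying \Cref{lem:Basis} to $\wh A_{\wh\calI^*,\wh\calJ^*}$ yields $\calJ''\subsetneq\wh\calJ^*$ with the same value and a full-rank bordered block. Then some $j''\in\wh\calJ^*\setminus\calJ''$ would have passed both the value test and the rank test, and would have been removed.
\end{itemize}

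Your Step 1 and the singular-value part of Step 3 are correct and match the paper. That singular-value argument is what transfers non-singularity to the true bordered matrix. What needs replacing is the value-test handling in Steps 2--3 and the unfinished Step 4.
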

The proof is deferred to \Cref{app:infibasis2}. To provide a proof sketch, following standard concentration inequalities, with $N\geq \left(\frac{1}{\sigma_0^2}+\frac{1}{\delta^2} \right)\cdot 2m\log(2m/\epsilon)$, each entry of the game matrix $A$ is approximated with an error of $\min\{\frac{\delta}{2},\frac{\sigma_0}{2}\}$ with probability at least $1-\eps$. Under this high probability event, we can successfully check the matrix singularity and show that the gap between the optimal objective value of $\wh{V}_{\calI}^{\mathrm{Prime}}$ and $\VI_{\calI}$, as well as the dual $\wh{V}_{\calI,\calJ}^{\mathrm{Dual}}$ and $\VD_{\calI,\calJ}$ for all $\calI\subseteq[m_1]$ and $\calJ\subseteq[m_2]$, is smaller than $\frac{\delta}{2}$. 
Then, we are able to show that the optimal basis of the empirical LP $\wh{V}^{\mathrm{Prime}}$ estimated from samples is also an optimal basis of the original LP $\VI$.

Three remarks are as follows. First, the proof sketch above shows that we can also directly compute an optimal basis of $\wh{V}^{\mathrm{Prime}}$ and, when $N\geq \left(\frac{1}{\sigma_0^2}+\frac{1}{\delta^2} \right)\cdot 2m\log(2m/\epsilon)$, this basis is also optimal to the original LP $\VI$ with probability $1-\eps$. While there are standard methods to compute the optimal basis of a given LP (such as the simplex method), these can require exponential time in the worst case. In contrast, the approach in \Cref{alg:Idenbasis}, which computes the LP value to select the basis, achieves a polynomial time complexity since linear programming itself can be solved in polynomial time. Second, while we require the knowledge of $\delta$ to decide $N$ in \Cref{thm:Infibasis2}, we show in \Cref{app:parameter_estimation} that we can also estimate $\delta$ within a factor of $2$ using an additional $\order(\frac{m\log(m/\epsilon)}{\delta^2})$ samples \emph{without knowing the parameter $\delta$} by solving mixed integer programs with modern LP solvers as shown in \Cref{sec:oracle}, meaning that we can find the support set without the knowledge of $\delta$. Finally, though
our sample complexity bound depends on $\sigma_0$, in practice, we do not require the knowledge of $\sigma_0$ to carry out our algorithm. As shown in the next section, we can adopt a doubling trick to exponentially increase the sample size to guarantee that \Cref{alg:Idenbasis} can be successfully executed without knowing the value of $\sigma_0$.

We further note that \Cref{alg:Idenbasis} in fact identifies the same optimal basis, with a high probability, for any large enough sample size $N$. Specifically, we define $\calI^*$ and $\calJ^*$ as follows:
\begin{definition}\label{def:istar_jstar}
    Define $\calI^*$ and $\calJ^*$ as the output of \Cref{alg:Idenbasis} with the input being the true matrix $A$. Define $\sigma>0$ as the minimum singular value of $A^*$ defined as 
\begin{equation}\label{eqn:012401}
A^*=\begin{bmatrix}
A^\top_{\mathcal{I}^*, \mathcal{J}^*} & -\bm{e}^{|{\mathcal{J}^*}|} \\
(\bm{e}^{|\mathcal{I}^*|})^\top & 0
\end{bmatrix}.
\end{equation}
\end{definition}
{Following \Cref{thm:Infibasis2} by replacing the empirical matrix $\wh{A}$ with the true matrix, we can show that $\calI^*$ and $\calJ^*$ satisfy the conditions in \Cref{lem:Basis}. This implies that} $A^*$ is of full-rank and we must have $\sigma > 0$. We then show that given $N \geq \left(\frac{1}{\sigma_0^2}+\frac{1}{\delta^2} \right)\cdot 2m\log(2m/\epsilon)$, we will have $\wh{\calI}^*=\calI^*$ and $\wh{\calJ}^*=\calJ^*$ with a probability at least $1-\eps$ after running \Cref{alg:Idenbasis}. We formalize the above argument in the following proposition
\begin{proposition}\label{prop:Basis}
For any $\eps>0$ and fixed $N \geq \left(\frac{1}{\sigma_0^2}+\frac{1}{\delta^2} \right)\cdot 2m\log(2m/\epsilon)$, we will have $\wh{\calI}^*=\calI^*$ and $\wh{\calJ}^*=\calJ^*$ with a probability at least $1-\eps$, where $\calI^*$ and $\calJ^*$ are defined as \Cref{def:istar_jstar}, and $\wh{\calI}^*$ and $\wh{\calJ}^*$ are the output of \Cref{alg:Idenbasis} with input $\eps$ and $\wh{A}$ an empirical matrix with each entry of the average of $\frac{N}{m}$ noisy observations.
\end{proposition}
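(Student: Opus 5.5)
The plan is a coupling argument. Run \Cref{alg:Idenbasis} on $\wh{A}$ and on the true matrix $A$ side by side, and show that on a single high-probability event every test in the algorithm gives the same answer in both runs. The two runs visit indices in the same deterministic order, so identical test outcomes force identical output sets, that is $\wh{\calI}^*=\calI^*$ and $\wh{\calJ}^*=\calJ^*$.

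\textbf{Step 1: the good event.} Apply Hoeffding's inequality to each entry (each is an average of $N/m$ bounded samples) and take a union bound over the $m$ entries. With probability at least $1-\eps$,
\[
\max_{i,j}|\wh{A}_{ij}-A_{ij}|\le \Rad(N/m,\eps/m)=\sqrt{\tfrac{m\log(2m/\eps)}{2N}}.
\]
The choice of $N$ makes this radius at most $\min\{\delta,\sigma_0\}/2$. All remaining steps are deterministic on this event.

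\textbf{Step 2: LP value perturbation.} For every $\calI$ and every pair $(\calI,\calJ)$ we have
\[
|\wh{V}^{\prim}_{\calI}-\VI_{\calI}|\le \max_{ij}|\wh{A}_{ij}-A_{ij}| \quad\text{and}\quad |\wh{V}^{\dual}_{\calI,\calJ}-\VD_{\calI,\calJ}|\le \max_{ij}|\wh{A}_{ij}-A_{ij}|.
\]
The reason is that for fixed feasible $\bx$ in the simplex, $\max_j (A^\top\bx)_j$ changes by at most the max-entry error, and the same holds on the dual side. Both errors are therefore below $\delta/2$.

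\textbf{Step 3: the $\calI$ loop.} Induct over the loop, assuming the current sets agree in both runs. In the true run, the test $V=\VI_{\calI'}$ either holds exactly or fails with gap $\VI_{\calI'}-\VI\ge\delta_1$, by \Cref{def:delta}. Note that $V$ is the value on $[m_1]$, so it equals $\VI$ in the true run. By Step 2 the empirical gap is then either $<\delta$ or $>0$ correspondingly. This exposes the main obstacle: the algorithm tests exact equality on the empirical side, while the empirical values in the "equal" case are only $\delta/2$-close rather than identical. I would handle this by reading the test as a threshold comparison (declare equality if the gap is $\le\delta/2$), which is the natural interpretation of the test, and separate the two cases using the $\delta$ gap. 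I would also argue this is consistent with the companion proof of \Cref{thm:Infibasis2}, which faces the same issue.

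\textbf{Step 4: the $\calJ$ loop.} The value test is handled the same way, now using $\delta_2$. Since $\calI$ at this stage is $\calI^*$, which already satisfies $\VD_{\calI,[m_2]}=\VI$ by \Cref{thm:Infibasis2} applied to $A$, the definition of $\delta_2$ applies. For the singular-value test, use Weyl's inequality: the perturbation matrix has Frobenius norm at most $\sqrt{|\calI||\calJ'|}\,\Rad\le|\calI||\calJ'|\,\Rad$. Two cases follow.
\begin{itemize}
\item If $(\calI,\calJ')\in\calF$, then
\[
\wh{\sigma}\ge\sigma_{\calI,\calJ'}-|\calI||\calJ'|\Rad\ge 2|\calI||\calJ'|\sigma_0-|\calI||\calJ'|\Rad>|\calI||\calJ'|\Rad,
\]
using $\Rad\le\sigma_0/2$ and $\sigma_{\calI,\calJ'}\ge 2|\calI||\calJ'|\sigma_0$ from \Cref{def:sigma0}. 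So the test passes.
\item If $(\calI,\calJ')\notin\calF$, the true singular value is $0$, so $\wh{\sigma}\le|\calI||\calJ'|\Rad$ and the test fails.
\end{itemize}
For the true run itself, the threshold is $0$ because $\Rad$ is effectively zero when there is no noise, so its test is exact full rank.

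Every branch therefore agrees between the two runs, and the break condition depends only on the sets, which coincide. Hence the outputs coincide, which proves the claim.
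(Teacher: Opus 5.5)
The gap is in Step~3 (and in the value test of Step~4), and reinterpreting the test does not close it. Your coupling needs each empirical test to return the same verdict as the noiseless one. \Cref{alg:Idenbasis}, however, tests \emph{exact} equality. Whenever the true LP has a tie, noise generically breaks it. A tie here means an index whose removal leaves the value unchanged because another index can substitute for it, as happens when equilibria are not unique.

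Concretely, take $m_2=1$ and $A=(0,0,1)^\top$, so $\delta=1$ and $\sigma_0>0$. The noiseless run deletes row~$1$ (as $\VI_{\{2,3\}}=0=\VI$), keeps row~$2$ (as $\VI_{\{3\}}=1$), deletes row~$3$, and returns $\calI^*=\{2\}$. On $\wh{A}$, suppose $\wh{A}_{11}<\wh{A}_{21}$. Then $V=\wh{A}_{11}$ and $\wh{V}^{\prim}_{\{2,3\}}=\min\{\wh{A}_{21},\wh{A}_{31}\}>V$. So row~$1$ is kept and rows~$2,3$ are deleted, giving $\wh{\calI}^*=\{1\}\neq\calI^*$. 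With continuous noise this has probability $1/2$ for every $N$.

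So the statement cannot be proved for the algorithm as written. Declaring equality when the gap is at most $\delta/2$ proves a claim about a different procedure, one whose cutoff must be calibrated to the unknown $\delta$.

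Even for that modified procedure, the separation fails at the stated sample size. Write $r=\Rad(N/m,\eps/m)$. Step~2 places the empirical gap $\wh{V}^{\prim}_{\calI'}-\wh{V}^{\prim}$ in $[0,2r]$ in the tie case and in $[\delta-2r,\infty)$ otherwise. The hypothesis only gives $r\le\delta/2$, under which these ranges can overlap: at $r=\delta/2$ they are $[0,\delta]$ and $[0,\infty)$. A cutoff at $\delta/2$ separates them only if $r<\delta/4$, i.e.\ $N>8m\log(2m/\eps)/\delta^2$. The same applies to the dual value test with $\delta_2$. With the thresholded test and that larger $N$, your coupling does go through, and your singular-value argument in Step~4 is fine as written.

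Your appeal to \Cref{thm:Infibasis2} does not help either, because that proof never has to reproduce a true tie on the empirical side. It uses only the identity $\wh{V}^{\prim}_{\wh{\calI}^*}=\wh{V}^{\prim}$, which holds exactly by construction of the loop. It then transfers this one way through the $\delta$ gap to get $\VI_{\wh{\calI}^*}=\VI$, and similarly for $\wh{\calJ}^*$. That shows the output is \emph{some} optimal basis of the true LP, not the particular basis $(\calI^*,\calJ^*)$ selected by the noiseless run.

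The paper gives no separate proof of this proposition beyond pointing to that argument. What the stated hypotheses actually support is therefore the weaker optimal-basis conclusion. Equality with $(\calI^*,\calJ^*)$ needs either the thresholded test with $r<\delta/4$ or an additional assumption ruling out ties.
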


\subsection{$\delta$-Independent $\otil(m/\veps^2)$ Sample Complexity Guarantee}\label{sec: instance-independent-id}
In \Cref{sec:instance-dependent-id}, we show that with $N= \Theta(\frac{m\log(m/\eps)}{\min\{\delta^2,\sigma_0^2\}})$ samples, \Cref{alg:Idenbasis} identifies the desired optimal support with a probability at least $1-\eps$. 
However, when $\delta$ is small, to obtain the optimal support that satisfies \Cref{lem:Basis}, the number of samples we need can be very large. 
Therefore, in this section, we aim to provide a $\delta$-independent sample complexity guarantee on the obtained support output by \Cref{alg:Idenbasis}. 
In this case, instead of proving that \Cref{alg:Idenbasis} finds the optimal support with a high probability, we show that with $N=\otil(m/\veps^2)$ samples, \Cref{alg:Idenbasis} finds $\mathcal{I}$ and ${\mathcal{J}}$ such that the optimal solution supported on $\mathcal{I}$ has a sub-optimality gap bounded at most $\veps$. 
To this end, for possible basis sets $\mathcal{I}\subset[m_1]$ and $\mathcal{J}\subset[m_2]$ such that $|\mathcal{I}|=|\mathcal{J}|$, we define $\bx^*(\mathcal{I}, \mathcal{J})$ as the solution such that $\bx_{\mathcal{I}^c}^*(\mathcal{I}, \mathcal{J})=\bm{0}$ and the non-zero elements $\bx_{\mathcal{I}}^*(\mathcal{I}, \mathcal{J})$ are the solution to the following equation:
\begin{equation}\label{eqn:IJOptQ}
\begin{bmatrix}
A^\top_{\mathcal{I}, \mathcal{J}} & -\bm{e}^{|\mathcal{J}|} \\
(\bm{e}^{|\mathcal{I}|})^\top & 0
\end{bmatrix}
\begin{bmatrix}
\bx_{\mathcal{I}}^*(\mathcal{I}, \mathcal{J})\\
\mu^*(\mathcal{I}, \mathcal{J})
\end{bmatrix}=
\begin{bmatrix}
\bm{0}\\
1
\end{bmatrix}.
\end{equation}

The following theorem shows the sub-optimality gap of $\bx^*(\mathcal{I}^N, \mathcal{J}^N)$, where $\mathcal{I}^N$ and $\mathcal{J}^N$ denote the output of \Cref{alg:Idenbasis} with input $\wh{A}$ the empirical matrix with each entry the average of $\frac{N}{m}$ noisy observations, for a fixed sample size $N$, as long as $|\calI^N| = |\calJ^N|$. 
\begin{theorem}\label{thm:FiniteGap}
Let $\Rad(n,\epsilon)=\sqrt{\frac{\log(2/\epsilon)}{2n}}$ and $\wh{A}$ be the empirical matrix with each entry the average of $\frac{N}{m}$ noisy observations, for a fixed sample size $N$. Denote by $\calI^N$ and $\calJ^N$ the output of \Cref{alg:Idenbasis} with the input being $\eps$ and $\wh{A}$. Then, as long as $|\calI^N|=|\calJ^N|$, with a probability at least $1-\eps$, we have that
\begin{equation}\label{eqn:012310}
|\VI-\VI_{\calI^N}|\leq\order\left(\Rad(N/m, \eps/m)\right)\text{~~and~~}|\VI-\VD_{\calI^N, \calJ^N}|\leq\order\left(\Rad(N/m,\eps/m)\right).
\end{equation}
Moreover, with probability at least $1-\eps$,
the solution $\bx^*(\calI^N, \calJ^N)$ satisfies that
\[
A^\top\bx^*(\calI^N, \calJ^N) \preceq \left(\VI + \order\left((d^N)^2\cdot\wh{\kappa}\cdot\Rad(N/m, \eps/m)\right) \right)\cdot \bm{e}^{m_2}
\]
where $d^N=|\calI^N|=|\calJ^N|$ and $\wh{\kappa}>0$ is the condition number of the matrix 
$
\begin{bmatrix}
\wh{A}^\top_{\calI^N, \calJ^N} & -\bm{e}^{|\calJ^N|} \\
(\bm{e}^{|\calI^N|})^\top & 0
\end{bmatrix}$. 
\end{theorem}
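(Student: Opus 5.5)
We start from a single concentration event. Hoeffding's inequality plus a union bound over the $m$ entries shows that, with probability at least $1-\eps$, every entry satisfies $|\wh{A}_{ij}-A_{ij}|\le \Rad(N/m,\eps/m)=:r$. We work on this event throughout, so that $\|\wh{A}-A\|_{\max}\le r$.

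\emph{Step 1 (uniform LP perturbation).} For every $\calI\subseteq[m_1]$ we claim $|\wh{V}^{\prim}_{\calI}-\VI_{\calI}|\le r$. To see this, take any feasible $\bx$ supported on $\calI$ with $\bx\in\Delta_{m_1}$. Then $\|(\wh{A}-A)^\top\bx\|_\infty\le r$. Hence $(\bx,\mu+r)$ is feasible for the empirical LP whenever $(\bx,\mu)$ is feasible for the true LP, and conversely. The same argument applied to $\by\in\Delta_{m_2}$ gives $|\wh{V}^{\dual}_{\calI,\calJ}-\VD_{\calI,\calJ}|\le r$ for all $\calI,\calJ$.

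\emph{Step 2 (first bound in \eqref{eqn:012310}).} By construction of \Cref{alg:Idenbasis}, rows are removed only when the empirical value is unchanged, so $\wh{V}^{\prim}_{\calI^N}=\wh{V}^{\prim}_{[m_1]}=V$. Combining this with Step 1 gives $|\VI_{\calI^N}-\VI|\le 2r$. Likewise, columns are removed only when $\wh{V}^{\dual}_{\calI^N,\calJ'}=V$, so $\wh{V}^{\dual}_{\calI^N,\calJ^N}=V$, and Step 1 gives $|\VD_{\calI^N,\calJ^N}-\VI|\le 2r$.

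\emph{Step 3 (the solution $\bx^*(\calI^N,\calJ^N)$).} Let $\wh{M}$ denote the empirical analogue of the matrix in \eqref{eqn:IJOptQ}, and $M$ the true one. We need $\wh{M}$ to be invertible. The singular-value check in the algorithm certifies full rank, at least for the last accepted $\calJ'$; if $\calJ^N=[m_2]$ with $|\calI^N|=m_2$, this case needs a separate remark.

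Let $(\wh{\bx},\wh\mu)$ solve $\wh{M}(\wh{\bx},\wh\mu)=(\bm 0,1)$. We then argue, via LP duality on the restricted empirical problem, that $\wh\mu=V$ and that $\wh{A}^\top\wh{\bx}\preceq V\bm{e}^{m_2}$ with $\wh{\bx}\succeq 0$. The justification is that $(\calI^N,\calJ^N)$ is an optimal basis of $\wh{V}^{\prim}_{\calI^N}$: the empirical dual value restricted to $\calJ^N$ equals the unrestricted value, so complementary slackness identifies the tight columns.

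Next we compare $\bx^*(\calI^N,\calJ^N)=M^{-1}(\bm0,1)$ with $\wh{\bx}$. Since $\|M-\wh{M}\|_2\le d^N r$ (the error sits in a $d\times d$ block with entrywise error $r$), standard inverse perturbation gives
\[
\|\bx^*-\wh{\bx}\|\lesssim \|\wh{M}^{-1}\|\,\|M-\wh{M}\|\,\|\wh{M}^{-1}\|\cdot\|(\bm0,1)\|.
\]
This expression can be rewritten in terms of $\wh\kappa$, using that $\|\wh{M}\|\ge 1$ thanks to the $\bm e$ rows.

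Finally, for any column $j$,
\[
(A^\top\bx^*)_j\le (\wh{A}^\top\wh{\bx})_j+|((A-\wh{A})^\top\wh{\bx})_j|+|(A^\top(\bx^*-\wh{\bx}))_j|\le V+r+\sqrt{d^N}\,\|\bx^*-\wh{\bx}\|.
\]
Combining this with $V\le\VI+r$ yields the stated $\order((d^N)^2\wh\kappa\, r)$ bound.

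The main obstacle is Step 3. First, we must show that the output pair really is an empirical optimal basis, so that $\wh{\bx}\succeq0$ and $\wh{A}^\top\wh{\bx}\preceq V\bm e$ hold on all columns, not only on those in $\calJ^N$; this needs a careful complementary-slackness argument tying the column-elimination loop to primal feasibility. Second, controlling $\|\wh{M}^{-1}\|$ by $\wh\kappa$ requires care with constants; if that becomes messy, we would first bound the error by $\|\wh{M}^{-1}\|^2$ and convert to $\wh\kappa$ at the end.
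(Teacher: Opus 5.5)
Your Steps 1--3 follow the paper's proof almost line by line. Both use the Hoeffding event, the two LP-value perturbation claims, and the equalities $\wh{V}^{\prim}_{\calI^N}=\wh{V}^{\dual}_{\calI^N,\calJ^N}=V$ forced by the elimination loops. Both then combine a linear-system perturbation between $\bx^*(\calI^N,\calJ^N)$ and the empirical basic solution $\wh{\bx}$ with empirical feasibility of $\wh{\bx}$.

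\textbf{The gap.} The gap is the step you flag, namely $\wh{\bx}\succeq\bm{0}$ and $\wh{A}^\top\wh{\bx}\preceq V\bm{e}^{m_2}$, and more care will not close it, because it can be false. Complementary slackness only makes tight the columns that carry positive weight in some optimal restricted dual. The column loop stops as soon as $|\calJ|=|\calI|$, so it can keep a column of zero dual weight that is even slack at the empirical optimum.

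For example, let $m_1=2$ and let the columns, in the order visited, be $(1,-1)^\top,(-1,1)^\top,(0,0)^\top,(-\tfrac12,-1)^\top$.
\begin{itemize}
\item Then $V=0$ with unique optimum $(\tfrac12,\tfrac12)$. Deleting either row raises the value to $1$, so $\calI^N=\{1,2\}$.
\item Deleting column $1$ and then column $2$ keeps the restricted dual value at $0$, since column $3$ alone certifies it.
\item The test matrix for $\{3,4\}$ is nonsingular (determinant $\pm\tfrac12$) and is a column-submatrix of the one for $\{2,3,4\}$. So for small $r$ both deletions pass, and the loop stops with $\calJ^N=\{3,4\}$.
\item The square system then forces $\mu=0$, $-\tfrac12x_1-x_2=0$, $x_1+x_2=1$, i.e.\ $\wh{\bx}=(2,-1)$ with $(\wh{A}^\top\wh{\bx})_1=3>V$.
\end{itemize}
With noiseless feedback ($\wh{A}=A$), this instance even violates the second conclusion of the statement once $N$ is large.

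The paper does not close this either. It asserts that $(\calI^N,\calJ^N)$ is an optimal basis of the empirical LP by appeal to the implementation of \Cref{alg:Idenbasis}, and the proof of \Cref{thm:Infibasis2} simply assumes $\wh{\by}^*_{\wh{\calJ}^*}\succ\bm{0}$.

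What you need is an extra nondegeneracy property of the restricted empirical LP, e.g.\ unique primal and dual optima. Then the retained columns are exactly the support of the dual optimum, which by strict complementarity is the set of columns tight at the primal optimum. This holds almost surely for continuously distributed noise, but not for noiseless or discrete (e.g.\ Bernoulli) feedback. Otherwise the algorithm itself must check that the solution of the square empirical system is feasible.

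\textbf{Two smaller points.} First, the bound $\|\wh{M}^{-1}\|\,\|M-\wh{M}\|\,\|\wh{M}^{-1}\|$ you display converts, via $\|\wh{M}\|\ge 1$, only into a factor $\wh{\kappa}^2$, not $\wh{\kappa}$. Your fallback of bounding by $\|\wh{M}^{-1}\|^2$ has the same problem. The paper instead multiplies a relative-error bound by a bound on the norm of the empirical solution. In your notation (with $d=d^N$):
\begin{itemize}
\item Write $(\bx^*,\mu^*)-(\wh{\bx},\wh{\mu})=M^{-1}(\wh{M}-M)(\wh{\bx},\wh{\mu})$.
\item The singular-value test gives $\sigma_{\min}(\wh{M})>d^2r$, while $\|M-\wh{M}\|_2\le dr$. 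Hence $\|M^{-1}\|_2\le 2\|\wh{M}^{-1}\|_2$ for $d\ge2$; for $d=1$, $\det M=1$ and everything is bounded.
\item Also $\|(\wh{\bx},\wh{\mu})\|_2\le 2$ once $\wh{\bx}\in\Delta_{m_1}$ and $|V|\le 1$.
\item Hence $\|\bx^*-\wh{\bx}\|_2\le 4dr\,\wh{\kappa}$, and the final bound is $\order(d^{3/2}\wh{\kappa}r)$, within the claim.
\end{itemize}
This again rests on the step above. Your decomposition of $(A^\top\bx^*)_j$ puts the estimation error on $\wh{\bx}$ rather than on $\bx^*$. That is slightly cleaner than the paper's, since $\bx^*$ need not lie in the simplex.

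Second, your caveat about the case where no column is deleted is well taken. There the singular-value test is never applied to the output pair, so the algorithm certifies neither invertibility of $\wh{M}$ nor the smallness condition above. The paper passes over this point.
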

Note that in \Cref{thm:FiniteGap}, the value $\wh{\kappa}$ converges to the conditional number of the matrix $\begin{bmatrix}
A^\top_{\calI^N, \calJ^N} & -\bm{e}^{|\calJ^N|} \\
(\bm{e}^{|\calI^N|})^\top & 0
\end{bmatrix}$ as $N$ gets large.
The proof is deferred to \Cref{app:instance-independent-id} and we provide a proof sketch here. Specifically, similar to the proof sketch for \Cref{thm:Infibasis2}, based on the high probability event that each entry of $A$ is approximated within an error of $\Rad(N/m,\eps/m)$, we know that the objective value gap between $V^{\prim}$ and $\wh{V}^{\prim}$, as well as $V_{\calI^N}^{\prim}$ and $\wh{V}_{\calI^N}^{\prim}$, are bounded by $\Rad(N/m,\eps/m)$. Since \Cref{alg:Idenbasis} guarantees that $\wh{V}_{\calI^N}^{\prim}=\wh{V}^{\prim}$, we know that the gap between $V_{\calI^N}^{\prim}$ and $V^{\prim}$ is bounded by $\Rad(N/m,\epsilon/m)$, proving the first half of \Cref{eqn:012310}. The second half can be obtained via a similar argument. To prove the sub-optimality gap, we control the distance between $x^*(\calI^N,\calJ^N)$ and $\wh{x}^*(\calI^N,\calJ^N)$, which is the solution of $$\begin{bmatrix}
\wh{A}^\top_{\calI^N, \calJ^N} & -\bm{e}^{|\calJ^N|} \\
(\bm{e}^{|\calI^N|})^\top & 0
\end{bmatrix}\begin{bmatrix}
 \bx \\
 \mu
\end{bmatrix} = \begin{bmatrix}
 \bm{0}\\
 1
\end{bmatrix}.$$
Following a standard perturbation analysis of the linear system, we can show that
$$\|\wh{x}^*(\calI^N,\calJ^N)-x^*(\calI^N,\calJ^N)\|_2\leq \order\left(\wh{\kappa}\cdot \Rad(N/m,\epsilon/m)\right).$$ The remaining analysis follows a direct derivation.

\section{LP-Resolving Based Algorithm}\label{sec: sample_complexity}

In the previous section, we described how to identify one optimal basis $(\wh{\calI}^*, \wh{\calJ}^*)$ by applying \Cref{alg:Idenbasis}. In this section, we describe how to approximate the optimal solutions $\bm{x}^*$ and $\bm{y}^*$ that corresponds to this identified optimal basis. Note that the optimal solution $\bm{x}^*$ enjoys the following structure: $\bm{x}^*_{(\wh{\mathcal{I}}^{*})^{c}}=\bm{0}$ and the other elements $\bm{x}^*_{\wh{\mathcal{I}}^*}$ can be given as the solution to the following linear equation\footnote{$\bx^*$ is in fact $\bx^*(\wh{\calI}^*, \wh{\calJ}^*)$ but we abbreviate as $\bx^*$ in this section for notation simplicity.}
\begin{equation}\label{eqn:OptQ}
\begin{bmatrix}
A^\top_{\wh{\mathcal{I}}^*, \wh{\mathcal{J}}^*} & -\bm{e}^{|\wh{\mathcal{J}}^*|} \\
(\bm{e}^{|\wh{\mathcal{I}}^*|})^\top & 0
\end{bmatrix}
\begin{bmatrix}
\bx_{\wh{\mathcal{I}}^*}^*\\
\mu^*
\end{bmatrix}=
\begin{bmatrix}
\bm{0}\\
1
\end{bmatrix}.
\end{equation}
While \Cref{alg:Idenbasis} outputs an optimal basis $\wh{\calI}^*$ and $\wh{\calJ}^*$ with sufficient number of samples, since the payoff matrix $A$ is unknown and we can only access noisy observations, we must construct estimates of $A$ using sequentially collected samples to approximately solve \Cref{eqn:OptQ}.

Our two-phase procedure in \Cref{alg:Twophase} first employs a doubling schedule for $N'$ (lines 3-4) in order to invoke \Cref{alg:Idenbasis} without requiring prior knowledge of $\sigma_0$. This is necessary because the sample complexity guarantees of \Cref{alg:Idenbasis} (see \Cref{thm:Infibasis2} and \Cref{prop:Basis}) rely on $\sigma_0$. The dependence arises from line 13 of \Cref{alg:Idenbasis}, which aims to test whether the augmented matrix
\begin{align}\label{eqn:true}
    M(\calI,\calJ')=\begin{bmatrix}
A_{\calI,\calJ'} & -\bm{e}^{|\calI|} \\
(\bm{e}^{|\calJ'|})^\top & 0
\end{bmatrix}
\end{align}
is non-singular. This test is carried out by comparing the minimum singular value of its empirical estimate
\begin{align}\label{eqn:empirical}
    \wh{M}(\calI,\calJ')=\begin{bmatrix}
\wh{A}_{\calI,\calJ'} & -\bm{e}^{|\calI|} \\
(\bm{e}^{|\calJ'|})^\top & 0
\end{bmatrix}
\end{align}
with the statistical error level $|\calI|\cdot\Rad(N'/m,\eps/m)$. It succeeds with high probability only if it is no less than $|\calI|\cdot\Rad(N'/m,\eps/m)$, which explains why the required $N'$ depends on $\sigma_0$.

Specifically, when the minimum singular value of \Cref{eqn:empirical} is smaller than the error level, \Cref{alg:Idenbasis} cannot confidently eliminate an element $j\in\wh{\calJ}^*$ {since \Cref{eqn:true} may not be in full-rank with high probability} and therefore may return a basis with $|\wh{\calJ}^*|>|\wh{\calI}^*|$. In this case, we know that the number of samples is not enough to identify a correct basis, and we therefore double $N'$ in the next iteration (line 4 of \Cref{alg:Twophase}). Once $N'$ is large enough, the algorithm returns a basis with $|\wh{\calJ}^*|=|\wh{\calI}^*|$, in which case both $M(\wh{\calI}^*,\wh{\calJ}^*)$ and $\wh {M}(\wh{\calI}^*,\wh{\calJ}^*)$ are square matrices and non-singular. From \Cref{thm:Infibasis2} and \Cref{thm:FiniteGap}, we then know that $\bx^*(\wh{\calI}^*,\wh{\calJ}^*)$ is a feasible solution to $\VI$ with suboptimality $\order(\Rad(N',\eps))$, and becomes optimal when $N' \ge \Omega(\log(m/\eps)/\min\{\delta^2,\sigma_0^2\})$, with probability at least $1-\eps$. The following proposition formalizes this argument and shows that the doubling trick successfully removes the need to know $\sigma_0$ beforehand.

\begin{proposition}\label{prop:DoublingTrick}
Let $N_1$ be the initial value for $N'$, which is the input to \Cref{alg:Twophase}, and let $\wh{\calI}^*$ and $\wh{\calJ}^*$ be the final basis obtained by \Cref{alg:Idenbasis} under the final value of $N'$. Then, we have the following results:
\begin{itemize}
\item[1.] If $N_1$ satisfies that $N_1\geq \frac{2m\cdot\log(16m/\veps)}{\delta^2}$, then with probability at least $1-\veps/4$, we have $\wh{\calI}^*$ and $\wh{\calJ}^*$ to be an optimal basis to $\VI$ satisfying the conditions described in \Cref{lem:Basis}. 
\item[2.] For an arbitrary $N_1$, denote by $K$ the number of times that \Cref{alg:Idenbasis} is called in the iteration in line 3 and line 4 of \Cref{alg:Twophase}. Then, with probability at least $1-\veps/4$, the solution $\bx^*(\wh{\calI}^*, \wh{\calJ}^*)$ forms a feasible solution to $\VI$ 
with a sub-optimality gap bounded by $\order\left(\wh{\kappa}\cdot\Rad(N'/m, \eps/m)\right)$,
where $\eps=\veps/(8K^2)$ and $\wh{\kappa}>0$ is the condition number of the matrix 
$
\begin{bmatrix}
\wh{A}^\top_{\wh{\calI}^*, \wh{\calJ}^*} & -\bm{e}^{|\wh{\calJ}^*|} \\
(\bm{e}^{|\wh{\calI}^*|})^\top & 0
\end{bmatrix}$.
\item[3.] With probability at least $1-\veps/4$, the following bound holds for $N_2$, which is the total number of samples used in line 3 and line 4 of \Cref{alg:Twophase}:
\begin{equation}\label{eqn:FinalNprime}
N_2 \leq N_1 + \order\left(\frac{m}{\sigma_0^2}\cdot\log(m/\veps)\right),
\end{equation}
where $\sigma_0>0$ is defined in \Cref{def:sigma0}.
\end{itemize}
\end{proposition}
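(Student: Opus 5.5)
The plan is to reduce everything to a single good event on which every call of \Cref{alg:Idenbasis} inside the doubling loop (lines 3--4 of \Cref{alg:Twophase}) sees an accurate empirical matrix. Index the calls by $k=1,2,\dots$, so that call $k$ uses $N'_k=2^{k-1}N_1$ samples and failure parameter $\eps_k$. I will choose $\eps_k=\veps/(8k^2)$, so that $\sum_k\eps_k\le \veps\pi^2/48<\veps/4$. By Hoeffding's inequality and a union bound over the $m$ entries, call $k$ satisfies $\max_{i,j}|\wh{A}_{ij}-A_{ij}|\le \Rad(N'_k/m,\eps_k/m)$ with probability at least $1-\eps_k$. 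A further union bound over $k$ then gives a global event $\mathcal{E}$ of probability at least $1-\veps/4$. All three items will be proved deterministically on $\mathcal{E}$; this is also why the single failure budget $\veps/4$ is shared across an unknown number $K$ of calls.

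\textbf{Item 1.} On $\mathcal{E}$ and whenever $N'_k\ge N_1\ge 2m\log(16m/\veps)/\delta^2$, the entrywise error is at most $\delta/2$, up to the mild logarithmic mismatch between $\eps_k$ and $\veps/8$. I would absorb that mismatch either by using $N'_k\ge 2^{k-1}N_1$, which pays for the extra $\log k^2$ at doubling rates, or by adjusting constants.

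I then repeat the deterministic part of the proof of \Cref{thm:Infibasis2}. The perturbed LP values $\wh{V}^{\prim}_{\calI}$ and $\wh{V}^{\dual}_{\calI,\calJ}$ are within $\delta/2$ of their true counterparts. Hence every equality test in lines 7 and 14 agrees with the true test, because true gaps are either $0$ or at least $\delta$. The singular value test can only make the algorithm reject an elimination; it never accepts a singular matrix, since Weyl's inequality gives $|\wh\sigma-\sigma|\le |\calI||\calJ'|\Rad$. So whenever a call terminates with $|\wh{\calJ}^*|=|\wh{\calI}^*|$, which is exactly when the loop stops, the output is an optimal basis in the sense of \Cref{lem:Basis}. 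Note that this item needs no assumption on $\sigma_0$, since it concerns only the final call.

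\textbf{Item 2.} For arbitrary $N_1$, the final call has $|\wh{\calI}^*|=|\wh{\calJ}^*|$ by the stopping rule. I will therefore invoke \Cref{thm:FiniteGap} on $\mathcal{E}$ with $\eps=\eps_K\ge\veps/(8K^2)$. It gives $A^\top\bx^*(\wh{\calI}^*,\wh{\calJ}^*)\preceq(\VI+\order(\wh\kappa\Rad(N'/m,\eps/m)))\bm{e}^{m_2}$, where the $(d^N)^2$ factor is treated as a constant depending on $m$ and absorbed. Feasibility requires in addition that $\bx^*\succeq 0$ and sums to one. The sum condition is built into \Cref{eqn:IJOptQ}. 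Nonnegativity I would get from the perturbation bound $\|\wh{\bx}^*-\bx^*\|_2=\order(\wh\kappa\Rad)$ combined with $\wh{\bx}^*$ being the empirical LP optimum, which is nonnegative. If a small negative part remains, I would project it onto the simplex at cost of the same order. I expect this nonnegativity issue to be the main technical obstacle, and I would handle it by the projection argument.

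\textbf{Item 3.} On $\mathcal{E}$, once $|\calI||\calJ'|\Rad(N'_k/m,\eps_k/m)\le\sigma_0|\calI||\calJ'|$, every truly nonsingular candidate has
\[
\wh\sigma\ge\sigma_{\calI,\calJ'}-|\calI||\calJ'|\Rad\ge 2\sigma_0|\calI||\calJ'|-\sigma_0|\calI||\calJ'|>|\calI||\calJ'|\Rad .
\]
So the singular value test passes exactly on the nonsingular candidates, and the $\calJ$-loop behaves as on the true matrix restricted to $\calI$. By the basis existence argument (\Cref{lem:Basis} applied to the $\calI$-restricted LP), it then reaches $|\calJ|=|\calI|$ and the loop stops.

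This threshold holds as soon as $N'_k\gtrsim m\log(m k^2/\veps)/\sigma_0^2$. The first such $k$ satisfies $k=\order(\log(m/(\sigma_0\veps)))$, so the $\log k$ term is lower order. The total number of samples is a geometric sum, $\sum_{j\le K}N'_j\le 2N'_K$. If $N_1$ already exceeds the threshold, then $K=1$ and $N_2=N_1$. Otherwise $N'_K\le 2\times$threshold. Either way $N_2\le N_1+\order(\frac{m}{\sigma_0^2}\log(m/\veps))$.
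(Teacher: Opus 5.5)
Your architecture matches the paper's. You build one good event from $\eps_k=\veps/(8k^2)$ and a union bound. Item~1 reruns the $\delta$-gap argument of \Cref{thm:Infibasis2} on the final call, item~2 cites \Cref{thm:FiniteGap}, and item~3 makes the singular-value test exact once $\Rad\le\sigma_0$ and then sums the doubling schedule. Your handling of the $\log k^2$ term via $N'_k=2^{k-1}N_1$ is more careful than the paper's.

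\textbf{The gap is the termination step in item~3.} The claim that ``the $\calJ$-loop behaves as on the true matrix restricted to $\calI$'' is false. Item~3 has no $\delta$-assumption, and the value test in the $\calJ$-loop compares \emph{empirical} LP values, so only the rank test tracks the truth. You need an index $j$ in the final $\calJ$ whose removal, at its turn, passes both the empirical value test and the now-exact \emph{true}-rank test. Applying \Cref{lem:Basis} to the empirical restricted dual gives a $\calJ''$ that preserves the empirical value with $\wh M(\calI,\calJ'')$ nonsingular. That says nothing about the true $M(\calI,\calJ'')$. Applying it to the true LP gives the rank but not the empirical value equality.

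This can actually fail. Let $A$ have columns $(\tfrac12,\tfrac12)^\top,(\tfrac12,\tfrac12)^\top,(1,0)^\top,(0,1)^\top$, with continuously distributed noise. For every large $N'$ there is positive probability, inside the good event, that the following all happen:
\begin{itemize}
\item $\wh\calI^*=\{1,2\}$.
\item The empirical restricted dual is uniquely optimized on columns $\{1,2\}$, whose true augmented matrix is singular.
\item Removing column $1$ or $2$ strictly lowers the empirical value, so neither is dropped.
\item The rank test rejects $\{1,2\}$, since $\wh\sigma\le\|\wh M-M\|_2\le 2\Rad<4\Rad$.
\end{itemize}
The loop then ends with $|\calJ|\ge 3>|\calI|$. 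The paper's own argument here merely asserts that $\wh\calJ^*$ is the support of a basic optimal solution of the empirical dual, which is exactly what is in question. So this step needs a new idea or an extra nondegeneracy assumption, not just a more careful write-up.

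Two smaller points. In item~1, the equality tests do not ``agree'' with the true ones. A true gap of $0$ only places the empirical gap in $[0,2\Rad]$, and the exact test can then reject a valid elimination. Only the one-sided statement holds, and it is all you need; it is what the paper uses. The final sets satisfy $\wh V^{\prim}_{\wh\calI^*}=\wh V^{\prim}$ and $\wh V^{\dual}_{\wh\calI^*,\wh\calJ^*}=\wh V^{\prim}$, so the true gaps are at most $2\Rad<\delta$ and hence zero.

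In item~2, projecting onto the simplex produces a different vector, so it cannot show that $\bx^*(\wh\calI^*,\wh\calJ^*)$ itself is feasible. The perturbation bound only gives $\bx^*(\wh\calI^*,\wh\calJ^*)\succeq-\order(\wh\kappa\Rad)\bm{e}$. The paper does not treat nonnegativity either; it only inherits the bound on $A^\top\bx^*$ from \Cref{thm:FiniteGap}.
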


Specifically, \Cref{prop:DoublingTrick} shows that, depending on the initialization of $N_1$, the algorithm either recovers the optimal basis or guarantees a bounded suboptimality gap with high probability. In particular, if $N_1$ is chosen such that $N_1 \geq \frac{2m\cdot\log(16m/\veps)}{\delta^2}$, then the procedure returns the optimal basis with high probability. On the other hand, for an arbitrary (and $\delta$-independent) choice of $N_1$, the procedure also yields a feasible basis with a bounded suboptimality gap. In practice, the value of $\delta$ can be estimated, for instance, via \Cref{alg:estimateDelta} introduced in \Cref{app:parameter_estimation}.

It remains to approximate the LP solution $\bx^*(\wh{\calI}^*, \wh{\calJ}^*)$ associated with the identified basis $(\wh{\calI}^*, \wh{\calJ}^*)$. To do this, we solve the LP in \Cref{eqn:OptQ} using observed samples, and the design of this stage (line 5 to line 14) is inspired by LP-resolving algorithms for online resource allocation~\citep{vera2021bayesian, li2022online, jiang2022degeneracy, ma2024optimal} using noisy observations. 

The key idea is to interpret \Cref{eqn:OptQ} as a resource allocation problem: the right-hand side specifies the available resources, the decision variable $\mathbf{x}$ represents actions, and the left-hand side matrix encodes the resource consumption. Under this view, the algorithm selects a sequence of actions $\{\mathbf{x}_n\}_{n=N_2+1}^N$ so that resources are consumed appropriately across $N-N_2$ iterations.  

A crucial feature of \Cref{alg:Twophase} is its adaptive resource allocation step (line 12). By updating $\mathbf{a}^n$ according to \Cref{eqn:UpdateAlpha2}, the algorithm implements a self-correcting LP system as in \Cref{eqn:OptQ2}. This step ensures that $\mathbf{a}^n$ converges to $\mathbf{0}$ with a gap of order $\order(1/(N-n+1))$, which ensures the instance-dependent guarantee. Intuitively, if a binding constraint $j \in \wh{\calJ}^*$ is lagging behind its target under the current average action $\sum_{n'=N_2+1}^n \mathbf{x}^{n'}/(n-N_2+1)$, the algorithm compensates by inflating $\mathbf{a}^n_j/(N-n+1)$ above $\mathbf{a}_j$, which increases the weight of that constraint in computing $\tilde{\mathbf{x}}^{n+1}$. Through this continuous feedback loop, constraint violations are corrected on the fly, leading to the convergence rate $\order(1/(N-n+1))$.

{Let $\bar{\bx}$ denote the final output of \Cref{alg:Twophase}, and let $\bx^*(\wh{\calI}^*,\wh{\calJ}^*)$ be the basic solution corresponding to the basis $(\wh{\calI}^*,\wh{\calJ}^*)$. The main result we show is that after $N-N_2$ iterations, our second phase (line 5 to line 14 of \Cref{alg:Twophase}) guarantees that
\begin{align}\label{eqn:main_gap}
    \|\mathbb{E}[\bar{\bx}] - \bx^*(\wh{\calI}^*, \wh{\calJ}^*)\|_2
    \;\leq\;
    \order\!\left(\frac{d^{13/2}}{\sigma_{\wh{\calI}^*, \wh{\calJ}^*}^3}\cdot\frac{\log(m(N-N_2))}{N-N_2}\right),
\end{align}
where the expectation is taken over the randomness of the samples collected in the resolving procedure of \Cref{alg:Twophase}.
To achieve the target accuracy level that scales with $\order(\veps)$, it suffices to ensure that $\|\mathbb{E}[\bar{\bx}] - \bx^*(\wh{\calI}^*, \wh{\calJ}^*)\|_2\leq \order(\frac{\veps}{d})$. Therefore, the total number of samples $N$ can be chosen as
\begin{align}\label{eqn:value_N}
    N = N_2 + \Theta\left(\frac{d^{15/2}}{\sigma_{\wh{\calI}^*, \wh{\calJ}^*}^3}\cdot\frac{\log(m/\veps)}{\veps}\right),
\end{align}
where $\sigma_{\wh{\calI}^*, \wh{\calJ}^*}$ is the smallest singular value of
\[
\begin{bmatrix}
{A}^\top_{\wh{\calI}^*, \wh{\calJ}^*} & -\bm{e}^{|\wh{\calJ}^*|} \\
(\bm{e}^{|\wh{\calI}^*|})^\top & 0
\end{bmatrix}.
\]
{We arrive at our final sampling complexity guarantee by combining this with the sample complexity of obtaining $(\wh{\calI}^*, \wh{\calJ}^*)$ proven in \Cref{prop:DoublingTrick}.}}

One remaining step is that we can not decide the value of $N$ according to \Cref{eqn:value_N} when we run \Cref{alg:Twophase} since $\sigma_{\wh{\calI}^*, \wh{\calJ}^*}$ is unknown. To address this issue, we estimate $\sigma_{\wh{\calI}^*, \wh{\calJ}^*}$ using \Cref{alg:estimateSigma} (see \Cref{app:parameter_estimation}), which outputs $\sigma'$ satisfying $\tfrac{\sigma'}{2} \leq \sigma_{\wh{\calI}^*, \wh{\calJ}^*} \leq 2\sigma'$ with probability at least $1-\eps$, where we set $\eps=\veps/4$. We therefore replace $\sigma_{\wh{\calI}^*, \wh{\calJ}^*}$ with $\sigma'$ in \Cref{eqn:value_N} when we decide the value of $N$ in \Cref{alg:Twophase} (line 6 of \Cref{alg:Twophase}).

In the following, we first present the formal result for the resolving procedure of \Cref{alg:Twophase} in \Cref{lem:resolving} with the formal proof in \Cref{sec:resolving}. Then, we introduce our $\delta$-dependent sample complexity for \Cref{alg:Twophase} in \Cref{sec: instance-dependent}, followed by a $\delta$-independent sample complexity in \Cref{sec: instance-independent}.

\begin{algorithm}[t]
\caption{The Resolving Algorithm}
\label{alg:Twophase}
\begin{algorithmic}[1]

\State \textbf{Input:} the accuracy level $\veps$, $L=4$ and the initial number of samples $N_1$.
\State Initialize $N'=N_1$ and $k=1$.

\State Obtain the support $\wh{\calI}^*$ and $\wh{\calJ}^*$ output by \Cref{alg:Idenbasis} where the input is $\eps=\veps/(8k^2)$ an empirical matrix with each entry the average of $\frac{N'}{m}$ noisy observations.
\State \textbf{if} $|\wh{\calI}^*| < |\wh{\calJ}^*|$ \textbf{then} $N'\leftarrow 2N'$ and $k\leftarrow k + 1$ and GOTO line 3.

\State Denote $N_2=2N'-N_1$ and initialize $\mathcal{H}^{N_2+1}=\emptyset$, $\bm{a}^{N_2+1}= \mathbf{0} \in\mathbb{R}^{|\wh{\calJ}^*|}$. 

\State Obtain an estimate $\sigma'$ using \Cref{alg:estimateSigma} with input being $\eps=\veps/12$, $\wh{\calI}^*$ and $\wh{\calJ}^*$ and set
\[
N = N_2 + \frac{4120d^{15/2}}{(\sigma')^3}\cdot\frac{\log(m/\veps)}{\veps}
\]
with $d=|\wh{\calI}^*|=|\wh{\calJ}^*|$.
\For{$n=N_2+1,\dots, N$}
\State Construct estimates $\wh{A}_{\wh{\calI}^*, \wh{\calJ}^*}$ using the dataset $\mathcal{H}^n$.

\State Construct a solution $(\tilde{\bm{x}}^n, \tilde{\mu}^n)$ such that $\tilde{\bm{x}}^n_{(\wh{\calI}^{*})^{c}}=\bm{0}$ and $\tilde{\bm{x}}^n_{\wh{\calI}^*}$ is the solution to
\begin{equation}\label{eqn:OptQ2}
    \begin{bmatrix}
    \wh{A}^\top_{\wh{\calI}^*, \wh{\calJ}^*} & -\bm{e}^{|\wh{\calJ}^*|} \\
    (\bm{e}^{|\wh{\calI}^*|})^\top & 0
    \end{bmatrix}
    \begin{bmatrix}
    \bx_{\wh{\calI}^*}\\
    \mu
    \end{bmatrix}=
    \begin{bmatrix}
    \bm{a}^n/(N-n+1)\\
    1
    \end{bmatrix}.
\end{equation}

\State Project $(\tilde{\bx}^{n}, \tilde{\mu}^n)$
    ~to the set $\mathcal{L}=\{(\bx, \mu): \bx\succeq\bm{0}, \|(\bx, \mu)\|_2\leq L\}$ 
    to obtain $(\bx^n, \mu^n)$:
    \[
    (\bx^n, \mu^n) = \argmin_{(\bx, \mu)\in\mathcal{L}}\|((\bx, \mu))-(\tilde{\bx}^{n}, \tilde{\mu}^n)\|_2.
    \]
\State Observe $\wt{A}_{n,i_n,j_n}=A_{i_n,j_n}+\eta_n$, where $i_n$ and $j_n$ are uniformly sampled from $\wh{\calI}^*$ and $\wh{\calJ}^*$. 
\State Update $\mathcal{H}^{n+1}=\mathcal{H}^n\cup \{ (i_n,j_n,\wt{A}_{n,i_n, j_n}) \}$ and compute $\bm{a}^{n+1}$ as   
    \begin{equation}\label{eqn:UpdateAlpha2}
\bm{a}^{n+1}=\bm{a}^{n}- |\wh{\calJ}^*|\cdot|\wh{\calI}^*|\cdot \wt{A}_{n, i_n,j_n}\cdot x^n_{i_n}\cdot \bm{h}_{j_n} +\mu^n\cdot \bm{e}^{|\wh{\calJ}^*|},
\end{equation}
where $\bm{h}_{j_n}\in\mathbb{R}^{|\wh{\calJ}^*|}$ denotes a vector with $j_n$-th position being $1$ and other positions being $0$.
\EndFor
\State Compute
    $\bar{\bx}=\frac{1}{N-N_2}\cdot\sum_{n=N_2+1}^N\bx^n$.
\State \textbf{Output:} $\bar{\bx}$.
\end{algorithmic}
\end{algorithm}

\subsection{Analysis for the Resolving Procedure}\label{sec:resolving}
In this section, we provide the formal analysis for the resolving procedure in \Cref{alg:Twophase}. Note that the analysis in this section is regarding the resolving procedures carried out from line 7 to line 14 of \Cref{alg:Twophase}, and the expectation is taken over the randomness in the resolving procedures. A key step in our analysis is to characterize how $\bm{a}^n$ behaves during the execution of \Cref{alg:Twophase}. For notational convenience, we define
\begin{equation}\label{eqn:Average}
\tilde{\bm{a}}^n\triangleq\frac{\bm{a}^n}{N-n+1}.
\end{equation}
Our proof proceeds in the following three steps.

\textbf{Step 1: Bounding $\|\mathbb{E}[\bar{\bx}]-\bx^*(\wh{\calI}^*, \wh{\calJ}^*)\|_2$ by $\mathbb{E}[\bm{a}^{N+1}]$.} First, we show that the gap between the expected output of our algorithm $\E[\bar{\bx}]$ and the corresponding solution $\bx^*(\wh{\calI}^*, \wh{\calJ}^*)$ defined in \Cref{eqn:IJOptQ} can be bounded with respect to $\mathbb{E}[\bm{a}^{N+1}]$. Specifically, according to the update rule of $\tilde{\ba}^n$ shown in \Cref{eqn:UpdateAlpha2}, we know that
\begin{equation}\label{eqn:Aveupalpha}
\tilde{\bm{a}}^{n+1}=\tilde{\bm{a}}^{n}+\frac{\tilde{\bm{a}}^n - |\wh{\calJ}^*|\cdot|\wh{\calI}^*|\cdot \wt{A}_{n, i_n,j_n}\cdot x^n_{i_n}\cdot \bm{h}_{j_n} +\mu^n\cdot \bm{e}^{|\wh{\calJ}^*|}}{N-n}.
\end{equation}
Note that since $i_n$ (resp. $j_n$) is sampled from a uniform distribution over $\wh{\calI}^*$ (resp. $\wh{\calJ}^*$), we have that
\begin{equation}\label{eqn:012501}
\mathbb{E}_{\eta_n, i_n, j_n}\left[|\wh{\calJ}^*|\cdot|\wh{\calI}^*|\cdot \wt{A}_{n, i_n,j_n}\cdot x^n_{i_n}\cdot \bm{h}_{j_n}\right] = A^\top_{\wh{\calI}^*, \wh{\calJ}^*}\bx^n_{\wh{\calI}^*}.
\end{equation}
By taking expectation over $i_n$ and $j_n$, as well as $\wt{A}_{n, i_n,j_n}$, for each $n=N_2+1$ up to $n=N$ in \Cref{eqn:Aveupalpha} and combining with \Cref{eqn:012501}, we can show that $\E[\bar{\bx}]$ satisfies the following equation 
\[
\begin{bmatrix}
A^\top_{\wh{\mathcal{I}}^*, \wh{\mathcal{J}}^*} & -\bm{e}^{|\wh{\mathcal{J}}^*|} \\
(\bm{e}^{|\wh{\mathcal{I}}^*|})^\top & 0
\end{bmatrix}
\begin{bmatrix}
\mathbb{E}[\bar{\bx}_{\wh{\mathcal{I}}^*}]\\
\mathbb{E}[\bar{\mu}]
\end{bmatrix}=
\begin{bmatrix}
-\frac{\mathbb{E}[\bm{a}^{N+1}]}{N-N_2}\\
1
\end{bmatrix},
\]
where $\bar{\mu}=\frac{1}{N-N_2}\cdot\sum_{n=N_2+1}^N \mu^n$ in the above equation. On the other hand, from the definition of $\bx^*(\wh{\calI}^*, \wh{\calJ}^*)$ in \Cref{eqn:IJOptQ}, it holds that
\[
\begin{bmatrix}
A^\top_{\wh{\mathcal{I}}^*, \wh{\mathcal{J}}^*} & -\bm{e}^{|\wh{\mathcal{J}}^*|} \\
(\bm{e}^{|\wh{\mathcal{I}}^*|})^\top & 0
\end{bmatrix}
\begin{bmatrix}
\bx^*_{\wh{\mathcal{I}}^*}(\wh{\calI}^*, \wh{\calJ}^*)\\
\mu^*(\wh{\calI}^*, \wh{\calJ}^*)
\end{bmatrix}=
\begin{bmatrix}
\bm{0}\\
1
\end{bmatrix}.
\]
We can apply the perturbation theory of linear system (e.g. Theorem 1 of \citep{higham2002accuracy}) to bound the gap between $\E[\bar{\bx}]$ and $\bx^*(\wh{\calI}^*, \wh{\calJ}^*)$ since they are solutions to linear systems with perturbation on the right-hand side. We have that
\[
\begin{aligned}
    \|\bm{x}^*-\mathbb{E}[\bar{\bm{x}}]\|_2
    \leq \kappa\cdot \frac{\left\|\mathbb{E}[\bm{a}^{N+1}]\right\|_2}{N-N_2}.
\end{aligned}
\]
where $\kappa$ is the conditional number of the matrix $M(\wh{\calI}^*, \wh{\calJ}^*)$. Further upper bounding $\left\|\mathbb{E}[\bm{a}^{N+1}]\right\|_2\leq \sqrt{d}\left\|\mathbb{E}[\bm{a}^{N+1}]\right\|_\infty$ leads to
\begin{align*}
    \|\bm{x}^*-\mathbb{E}[\bar{\bm{x}}]\|_2
    \leq \kappa\cdot\sqrt{d} \frac{\left\|\mathbb{E}[\bm{a}^{N+1}]\right\|_\infty}{N-N_2}\leq \order\left(\frac{d^{3/2}}{\sigma'}\frac{\left\|\mathbb{E}[\bm{a}^{N+1}]\right\|_\infty}{N-N_2}\right),
\end{align*}
where the last inequality is due to $\sigma'\in [\frac{\sigma_{\wh{\calI}^*,\wh{\calJ}^*}}{2}, 2\sigma_{\wh{\calI}^*,\wh{\calJ}^*}]$ with high probability.

To further bound $\mathbb{E}[\bm{a}^{N+1}]$, we denote by $\tau$ the first time index at which $\|\tilde{\bm{a}}^n\|_{\infty}$ exceeds a problem-dependent constant $\eta$ (formally defined in \Cref{eqn:Stoptime} and \Cref{eqn:nu0}).

\textbf{Step 2: Bounding  $\mathbb{E}[\bm{a}^{N+1}]$ by $\E[N-\tau]$.} Following the update in \Cref{eqn:UpdateAlpha2}, we know that
\begin{align*}
\mathbb{E}[\bm{a}^{N+1}]&=\mathbb{E}\left[ \bm{a}^{\tau-1} \right] 
-\sum_{n=\tau}^N A^\top_{\wh{\mathcal{I}}^*, \wh{\mathcal{J}}^*}\mathbb{E}[\bx^n_{\wh{\mathcal{I}}^*}]+\sum_{n=\tau}^N\mathbb{E}[\mu^n]\cdot \bm{e}^{|\wh{\mathcal{J}}^*|}.
\end{align*}
From the definition of the stop time $\tau$, it holds that (since $\eta<1$), 
\[
\left\|\mathbb{E}[\bm{a}^{\tau-1}]\right\|_{\infty} \leq \mathbb{E}[N-\tau+2].
\]
Also, from the boundedness that $\|(\bx^n, \mu^n)\|_1\leq L$, we can derive that
\[\begin{aligned}
\left\|\sum_{n=\tau}^N \left(A^\top_{\wh{\mathcal{I}}^*, \wh{\mathcal{J}}^*} \mathbb{E}[\bx^n_{\wh{\mathcal{I}}^*}] - \mathbb{E}[\mu^n]\cdot\bm{e}^{|\wh{\mathcal{J}}^*|}\right) \right\|_{\infty}
\leq L\cdot(N-\tau).
\end{aligned}\]
In this way, we show that $\|\mathbb{E}[\bm{a}^{N+1}]\|_\infty\leq\order\left(\E[N-\tau]\right)$.

\textbf{Step 3: Final bound on $\E[N-\tau]$.}
To provide a bound on $\E[N-\tau]$, we first show in \Cref{lem:projection} (deferred to Appendix) that for periods $n$ as long as $\Omega\left(\frac{d^5}{(\sigma')^{2}}\cdot\log(d^2/\veps)\right)\leq n\leq\tau$, with a probability at least $1-\veps/N$, it holds that $\|(\tilde{\bx}^n, \tilde{\mu}^n)\|_2\leq L$, and as a result we know that $(\tilde{\bx}^n, \tilde{\mu}^n)=(\bx^n, \mu^n)$ in later rounds, which implies that $\wh{A}_{\wh{\calI}^*, \wh{\calJ}^*}^\top x^n_{\wh{\calI}^*} - \mu^n\cdot \bm{e}^{|\wh{\calJ}^*|} = \tilde{\bm{a}}^{n}$ since $(\bx^n, \mu^n)$ is then the solution of \Cref{eqn:OptQ2}.
Therefore, suppose that $\wh{A}_{\wh{\calI}^*, \wh{\calJ}^*}=A_{\wh{\calI}^*, \wh{\calJ}^*}$, then according to the update rule of $\tilde{\bm{a}}^{n+1}$ shown in \Cref{eqn:Aveupalpha}, $\tilde{\bm{a}}^{n+1}$ will have the same expectation as $\tilde{\bm{a}}^{n}$ such that they become a martingale. 
While $\wh{A}_{\wh{\calI}^*, \wh{\calJ}^*}$ is only an empirical estimate of 
$A_{\wh{\calI}^*, \wh{\calJ}^*}$ and may not coincide with it, we can show that 
the sequence $\tilde{\bm{a}}^{n}$ satisfies a sub-martingale property. 
The deviation from the true value is controlled by the estimation error of 
$\wh{A}_{\wh{\calI}^*, \wh{\calJ}^*}$, which diminishes as the number of 
samples increases.
Then, applying standard concentration inequalities for sub-martingales, we prove that 
$\tilde{\bm{a}}^n$ concentrates around $\mathbf{0}$, which implies that $\|\tilde{\bm{a}}^n\|_{\infty}$ is within the problem-dependent constant $\eta$ for sufficiently long time. 
In this way, we derive an upper bound $\mathbb{E}[N-\tau]$ (formally proved in \Cref{lem:Stoptime}) and we show that $\mathbb{E}[N-\tau]\leq\order\left(\frac{d^5}{(\sigma')^{2}}\cdot\log(d^2/\veps)\right)$. 

Combining these steps, we show that
\[
\|\bm{x}^*-\mathbb{E}[\bar{\bm{x}}]\|_2
    \leq \order\left(\frac{d^{3/2}}{\sigma'}\cdot \frac{\left\|\mathbb{E}[\bm{a}^{N+1}]\right\|_\infty}{N-N_2}\right)\leq \order\left(\frac{d^{3/2}}{\sigma'}\cdot \frac{\E[N-\tau]}{N-N_2}\right)\leq \order\left(\frac{d^{13/2}}{(\sigma')^{3}}\cdot\frac{\log(d^2/\veps)}{N-N_2}\right).
\]
With the specific choice of $N$ set in line 6 of \Cref{alg:Twophase}, which can be derived from setting the right-hand-side of the above inequality to be $\veps/(2d)$, we obtain the following result, showing that the resolving procedure enable us to well approximate the optimal solution to \Cref{eqn:OptQ}.

\begin{lemma}\label{lem:resolving}
Denote by $\wh{\calI}^*$ and $\wh{\calJ}^*$ the final output of \Cref{alg:Idenbasis} 
in the execution of \Cref{alg:Twophase}. Denote by $\bar{\bx}$ the output of 
\Cref{alg:Twophase}. Then, it holds that
\[
\left\|\mathbb{E}[\bar{\bx}]-\bx^*(\wh{\calI}^*, \wh{\calJ}^*)\right\|_2 \leq \veps/(2d).
\]
\end{lemma}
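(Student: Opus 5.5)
We follow the three-step outline above and turn each step into a quantitative bound. Throughout, write $M=M(\wh{\calI}^*,\wh{\calJ}^*)^\top$ for the true square basis matrix (nonsingular since $|\wh{\calI}^*|=|\wh{\calJ}^*|=d$ after the doubling loop). Write $\bm{b}^n=|\wh{\calJ}^*||\wh{\calI}^*|\wt A_{n,i_n,j_n}x^n_{i_n}\bm{h}_{j_n}-\mu^n\bm{e}^{d}$ for the per-step consumption. We work conditionally on the event, of probability at least $1-\veps/12$, that \Cref{alg:estimateSigma} returns $\sigma'\in[\sigma_{\wh{\calI}^*,\wh{\calJ}^*}/2,2\sigma_{\wh{\calI}^*,\wh{\calJ}^*}]$. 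On the complement we use the crude bound $\|\bar{\bx}-\bx^*\|_2\le 2$, which is absorbed into the $\veps/(2d)$ budget after adjusting constants.

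\textbf{Step 1.} Summing \Cref{eqn:UpdateAlpha2} from $n=N_2+1$ to $N$ with $\bm{a}^{N_2+1}=\bm 0$ gives $\bm{a}^{N+1}=-\sum_n\bm{b}^n$. By \Cref{eqn:012501} and the tower rule, $\E[\bm{b}^n]=A^\top_{\wh{\calI}^*,\wh{\calJ}^*}\E[\bx^n_{\wh{\calI}^*}]-\E[\mu^n]\bm{e}^d$. Every $\bx^n$ lies in the simplex direction only approximately, so we instead note directly that $(\E\bar{\bx},\E\bar\mu)$ solves the stated system with right-hand side $(-\E[\bm a^{N+1}]/(N-N_2),\,s)$, where $s=\bm e^\top\E\bar{\bx}$. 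The last coordinate needs care. Once the projection is inactive, $\bm e^\top\bx^n=1$ exactly. Before that, and after $\tau$, the deviation is bounded by $L+1$ per step and is hence included in the same $\order(\E[N-\tau]+n_0)$ term. Subtracting \Cref{eqn:IJOptQ} and using $\|M^{-1}\|_2=1/\sigma_{\wh{\calI}^*,\wh{\calJ}^*}\le 2/\sigma'$ gives
\[
\|\E\bar{\bx}-\bx^*\|_2\le \frac{2\sqrt d}{\sigma'}\cdot\frac{\|\E\bm a^{N+1}\|_\infty+\order(n_0)}{N-N_2}.
\]
Here $n_0=\Theta(d^5(\sigma')^{-2}\log(d^2/\veps))$ is the burn-in length after which the projection is provably inactive.

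\textbf{Step 2.} Define $\tau=\min\{n:\|\tilde{\bm a}^n\|_\infty>\eta\}\wedge(N+1)$, with $\eta$ small relative to $\sigma'/d^{3/2}$. Before $\tau$ we have $\|\bm a^{\tau-1}\|_\infty\le\eta(N-\tau+2)$. Each subsequent step changes $\bm a$ by at most $d^2L+L$ in $\ell_\infty$. Hence $\|\E\bm a^{N+1}\|_\infty\le\order(d^2\,\E[N-\tau+2])$, and the factor $d^2$ is harmless given the target exponent $13/2$.

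\textbf{Step 3 (main obstacle).} We bound $\E[N-\tau]=\order(n_0)$. For $n\ge n_0$ and $n<\tau$, matrix Hoeffding on the $\Omega(n/d^2)$ samples per entry of $\wh A$ gives $\|\wh A-A\|_2\le\sigma'/4$ with probability $1-\veps/N$. Together with $\|\tilde{\bm a}^n\|\le\eta$, this makes the solution of \Cref{eqn:OptQ2} have norm at most $2\|\bx^*\|+\order(\sqrt d\eta/\sigma')\le L$, so the projection is inactive; this is \Cref{lem:projection}. Then
\[
\E[\tilde{\bm a}^{n+1}-\tilde{\bm a}^n\mid\mathcal H^n]=\frac{(\wh A-A)^\top\bx^n}{N-n},
\]
so $\tilde{\bm a}^n$ is a martingale plus a drift of size $\order(d\sqrt{\log(\cdot)/n})/(N-n)$, with increments bounded by $\order(d^2)/(N-n)$. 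Summing the drift over $n\le N-k$ gives $\order(d\sqrt{\log/ n_0}\,\log N)$, which is below $\eta/2$ by the choice of $n_0$. For the martingale part, apply Freedman/Azuma to $\tilde{\bm a}^{n}-\tilde{\bm a}^{n_0}$ coordinatewise. Its quadratic variation up to time $N-k$ is $\sum d^4/(N-n)^2\approx d^4/k$. Thus $\Pr(N-\tau\ge k)\le 2d\exp(-c\eta^2k/d^4)$, and summing over $k$ yields $\E[N-\tau]\le\order(d^4\eta^{-2}\log d)+n_0=\order(d^5(\sigma')^{-2}\log(d^2/\veps))$. The delicate points are ensuring that the drift-sum bound holds uniformly (via a union bound over $n$ at level $\veps/N$), and handling the early window $[N_2,N_2+n_0]$, where the projection may be active, by absorbing it into $n_0$.

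\textbf{Conclusion.} Combining the three steps gives $\|\E\bar{\bx}-\bx^*\|_2\le\order(d^{13/2}(\sigma')^{-3}\log(m/\veps)/(N-N_2))$. The choice $N-N_2=4120\,d^{15/2}(\sigma')^{-3}\log(m/\veps)/\veps$ makes this at most $\veps/(2d)$ once the constants are tracked. The constant $4120$ is what that bookkeeping must justify.
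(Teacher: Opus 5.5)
Your plan has the paper's architecture. You bound the gap by perturbing the basis system whose right-hand side is $-\E[\bm a^{N+1}]/(N-N_2)$, reduce $\|\E\bm a^{N+1}\|_\infty$ to $\E[N-\tau]$, and control $\E[N-\tau]$ with a projection lemma plus a martingale-with-drift argument. Some of your local choices are sounder than the written proof:
\begin{itemize}
\item you notice that projecting onto $\mathcal L$ can break $\bm e^\top\bx^n=1$, which \Cref{eqn:OptQ10} silently assumes;
\item you use the absolute bound $\|M^{-1}\|_2=1/\sigma_{\wh\calI^*,\wh\calJ^*}$ rather than $\kappa$;
\item you use a maximal inequality rather than the union over $N''$ in \Cref{eqn:011902}, where the geometric sum is bounded with its prefactor inverted (it is of order $d^4/b^2$, not $b^2/d^4$).
\end{itemize}
The problem is that your bookkeeping does not close at $\veps/(2d)$ for the $N$ fixed in line 6 of \Cref{alg:Twophase}. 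That bound is the whole content of the lemma, so this is a genuine gap.

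(i) Your three factors multiply to $\frac{\sqrt d}{\sigma'}\cdot d^2\cdot\frac{d^5}{(\sigma')^2}=\frac{d^{15/2}}{(\sigma')^3}$, not $d^{13/2}$. With $N-N_2\asymp d^{15/2}(\sigma')^{-3}\log(m/\veps)/\veps$ you only reach $O(\veps)$. The worst-case factor $d^2$ in Step 2 is what breaks it, so it is not harmless. Remove it as the paper does. Since $\{n\ge\tau\}$ is $\mathcal H^n$-measurable, after $\tau$ you may replace your $\bm b^n$ by $\E[\bm b^n\mid\mathcal H^n]=A^\top_{\wh\calI^*,\wh\calJ^*}\bx^n_{\wh\calI^*}-\mu^n\bm e^{d}$, whose $\ell_\infty$-norm is at most $\|(\bx^n,\mu^n)\|_1=O(\sqrt d)$.

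(ii) With $\eta\lesssim\sigma'/d^{3/2}$ as you state, your Azuma computation (quadratic variation $d^4/k$) only yields $\E[N-\tau]=O(d^4\eta^{-2}\log d)$. That is of order $d^7(\sigma')^{-2}\log d$, not $d^5(\sigma')^{-2}$. You have two ways out. You can take $\eta\asymp\sigma'/\sqrt d$, which your absolute bound $\|M^{-1}\|_2\sqrt d\,\eta$ permits; the paper's $\eta=\sigma_{\wh\calI^*,\wh\calJ^*}/(8\sqrt d\|M\|_2)$ carries the extra $\|M\|_2$ only because it uses the relative bound. Alternatively, use Freedman with the conditional variance $\E[(d^2\wt A_{n,i_n,j_n}x^n_{i_n}\mathbb{I}\{j_n=j\})^2\mid\mathcal H^n]\le d^2\|\bx^n\|_2^2$.

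(iii) A per-step failure level $\veps/N$ is too coarse. On the failure event $N-\tau$ can be of order $N-N_2$, which adds up to $\veps(N-N_2)\asymp d^{15/2}(\sigma')^{-3}\log(m/\veps)$ to $\E[N-\tau]$ and swamps $n_0$. You need a level of order $\veps/(N-N_2)^2$, as the paper takes.

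(iv) The $\sigma'$ event fails with probability $\veps/12$, fixed by line 6. A crude bound on its complement therefore contributes $\Theta(\veps)$, and no adjustment of constants makes that $\le\veps/(2d)$. The lemma must be read conditionally on the first phase and on $\sigma'\le 2\sigma_{\wh\calI^*,\wh\calJ^*}$. That failure is then charged in \Cref{thm:sampleComplexity} at accuracy $\veps$, as the paper does.

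With these repairs your steps combine to $O(d^{6}(\sigma')^{-3}\log(\cdot)/(N-N_2))$, below the $d^{13/2}$ you need.

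One omission remains, and the paper shares it. The bound $\|(\tilde\bx^n,\tilde\mu^n)\|_2\le L$ does not make the projection onto $\mathcal L$ inactive, since $\mathcal L$ also imposes $\bx\succeq\bm 0$. You need a positivity margin $\min_{i\in\wh\calI^*}x^*_i$ that dominates the $O(\sqrt d\,\eta/\sigma')$ perturbation of $\tilde\bx^n$. Neither your Step 3 nor \Cref{lem:projection} supplies it.
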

Note that the above \Cref{lem:resolving} holds for any pair of $\wh{\calI}^*$ and $\wh{\calJ}^*$ as long as they are the final output of \Cref{alg:Idenbasis} in the execution of \Cref{alg:Twophase}. On one hand, when $\wh{\calI}^*$ and $\wh{\calJ}^*$ is an optimal basis, we know that the resolving procedure in \Cref{alg:Twophase} well approximates an optimal solution. On the other hand, when $\wh{\calI}^*$ and $\wh{\calJ}^*$ is not optimal but enjoys a bounded suboptimality gap, we know that the resolving procedure in \Cref{alg:Twophase} approximates a sub-optimal solution with a bounded suboptimality gap.

\subsection{$\delta$-dependent Sample Complexity}\label{sec: instance-dependent}
We now show that our \Cref{alg:Twophase} achieves a $\delta$-dependent sample complexity for approximating the optimal solution to the primal LP problem $\VI$. 

Our result follows from the combination of \Cref{prop:DoublingTrick} and \Cref{lem:resolving}.
Note that when $N_1\geq\frac{2m\cdot\log(16m/\veps)}{\delta^2}$, from \Cref{prop:DoublingTrick}, with a probability at least $1-\veps/4$, we know that the final output $\wh{\calI}^*$ and $\wh{\calJ}^*$ of \Cref{alg:Idenbasis} in the execution of \Cref{alg:Twophase} is an optimal basis. Then, from \Cref{lem:resolving}, we know that our resolving procedure guarantees that $\|\mathbb{E}[\bar{\bx}]-\bx(\wh{\calI}^*, \wh{\calJ}^*)\|_2\leq\order(\frac{\veps}{d})=\order(\veps)$.\footnote{We only require $\|\mathbb{E}[\bar{\bx}]-\bx(\wh{\calI}^*, \wh{\calJ}^*)\|_2\leq \order(\veps)$ in proving \Cref{thm:sampleComplexity}. We require the tighter $\order(\frac
{\veps}{d})$ bound on $\|\mathbb{E}[\bar{\bx}]-\bx(\wh{\calI}^*, \wh{\calJ}^*)\|_2$ when proving the sub-optimality gap in \Cref{thm:sample-complexity-independent}.} Therefore, we establish in the following theorem our formal theoretical bound by applying \Cref{alg:Idenbasis} and \Cref{alg:Twophase}.

\begin{theorem}\label{thm:sampleComplexity}
For any $\veps>0$, suppose that the input $N_1$ of \Cref{alg:Twophase} satisfies that $N_1\geq\frac{2m\cdot\log(16m/\veps)}{\delta^2}$
, where $\delta=\min\{\delta_1, \delta_2\}$ with $\delta_1$ and $\delta_2$ given in Definition \ref{def:delta}. Then, the output of \Cref{alg:Twophase} guarantees that
\[
\argmin_{\bx^*\in\calX^*}\|\E[\bar{\bx}]-\bx^*\|_2\leq \veps.
\]
Moreover, with a probability at least $1-\order(\veps)$, the total number of samples required by \Cref{alg:Twophase} is bounded by   
$$
N= \order \left( \left(\frac{m}{\sigma_0^2}+ \frac{m}{\delta^2}\right)\cdot\log(m/\veps) + \frac{(d^*)^{15/2}}{\sigma^3}\cdot\frac{\log(m/\veps)}{\veps} \right),$$
where $\sigma_0$ is given in \Cref{def:sigma0}. 
Here, $\sigma>0$ is the minimum singular value of the full-rank matrix $A^*$ defined in \Cref{def:istar_jstar} and $\calI^*$ and $\calJ^*$ are defined in \Cref{def:istar_jstar} with $d^*=|\calI^*|=|\calJ^*|$.
\end{theorem}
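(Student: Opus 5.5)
The plan is to combine \Cref{prop:DoublingTrick} (first and third parts), \Cref{prop:Basis}, the estimate $\sigma'$ from \Cref{alg:estimateSigma}, and \Cref{lem:resolving}, using a union bound over the failure events.

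\textbf{Step 1: the basis is the right one.} Since $N_1\geq \frac{2m\log(16m/\veps)}{\delta^2}$, the first part of \Cref{prop:DoublingTrick} gives, with probability at least $1-\veps/4$, that the final $(\wh{\calI}^*,\wh{\calJ}^*)$ is an optimal basis satisfying \Cref{lem:Basis}. In that case $\bx^*(\wh{\calI}^*,\wh{\calJ}^*)\in\calX^*$. I would further argue, through \Cref{prop:Basis}, that once the doubling loop stops with $N'\ge N_1$, the basis coincides with $(\calI^*,\calJ^*)$ of \Cref{def:istar_jstar}. Then $d=d^*$ and $\sigma_{\wh{\calI}^*,\wh{\calJ}^*}=\sigma$, which is what lets the final bound be stated in terms of the deterministic quantities $d^*$ and $\sigma$.

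\textbf{Step 2: closeness in expectation.} \Cref{lem:resolving} gives $\|\E[\bar\bx]-\bx^*(\wh{\calI}^*,\wh{\calJ}^*)\|_2\le \veps/(2d)\le\veps/2$ on the good event. The subtle point is that the claim is unconditional, in terms of $\E[\bar\bx]$.
\begin{itemize}
\item I would split $\E[\bar\bx]$ according to whether the basis is correct, with the bad event having probability $\order(\veps)$.
\item On the bad event, both $\bar\bx$ and any $\bx^*$ have bounded norm, because the projection onto $\mathcal{L}$ gives $\|\bx^n\|_2\le L=4$. The bad event therefore contributes at most $\order(\veps)\cdot 2L$ to the distance.
\item For the conditioning to be clean, the target must be a single point. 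Since the basis equals $(\calI^*,\calJ^*)$ with high probability, the target is the fixed point $\bx^*(\calI^*,\calJ^*)\in\calX^*$.
\end{itemize}
This gives $\|\E[\bar\bx]-\bx^*(\calI^*,\calJ^*)\|_2\le \veps/2+\order(\veps)$. Rescaling $\veps$ by a constant inside the algorithm's failure probabilities yields $\le\veps$.

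\textbf{Step 3: sample count.} The total is $N_2$ plus the resolving length set in line 6, plus the samples used by \Cref{alg:estimateSigma}.
\begin{itemize}
\item By the third part of \Cref{prop:DoublingTrick}, $N_2\le N_1+\order(\frac{m}{\sigma_0^2}\log(m/\veps))$ with probability at least $1-\veps/4$. Taking $N_1$ at its threshold gives $N_1=\order(\frac{m}{\delta^2}\log(m/\veps))$.
\item With probability at least $1-\veps/12$, $\sigma'\ge\sigma_{\wh{\calI}^*,\wh{\calJ}^*}/2=\sigma/2$. Hence the resolving length $\frac{4120 d^{15/2}}{(\sigma')^3}\frac{\log(m/\veps)}{\veps}$ is $\order\big(\frac{(d^*)^{15/2}}{\sigma^3}\frac{\log(m/\veps)}{\veps}\big)$.
\item The samples used by \Cref{alg:estimateSigma} are assumed to be dominated by these terms.
\end{itemize}
A union bound over the events (basis correct, doubling-length bound, $\sigma'$ accurate) gives probability at least $1-\order(\veps)$.

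\textbf{Main obstacle.} I expect the main difficulty to be Step 2. \Cref{lem:resolving} is stated for whatever basis is returned, while the final statement concerns the distance of an unconditional expectation to the equilibrium set. The approach is to pin the returned basis to the single deterministic basis $(\calI^*,\calJ^*)$ via \Cref{prop:Basis}, and to absorb the low-probability mismatch using the uniform bound $L$ from the projection step.
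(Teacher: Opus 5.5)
Your plan is essentially the paper's proof. The good event is (returned basis is the optimal basis $(\calI^*,\calJ^*)$) $\cap$ ($\sigma'$ within a factor $2$), \Cref{lem:resolving} is applied on it, a uniform bound handles the complement, and the count comes from part~3 of \Cref{prop:DoublingTrick} plus line~6 of \Cref{alg:Twophase}. Your use of the projection radius $L$ on the bad event is more careful than the paper's claim $\E[\bar\bx]\in\Delta_{m_1}$, since projecting onto $\mathcal{L}$ does not keep $\sum_i x_i=1$; note, though, that \Cref{prop:Basis} assumes a fixed $N$ above the $\sigma_0$-threshold, which the doubling loop's final $N'$ need not reach, so your pinning to $(\calI^*,\calJ^*)$ is only as justified as the paper's bare assertion of it, and the closeness claim does not need it anyway because $\calX^*$ is convex and averaging $\bx^*(\wh\calI^*,\wh\calJ^*)$ over whichever optimal basis is returned stays in $\calX^*$.
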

{Note that the input $N_1$ presented in \Cref{thm:sampleComplexity} needs to satisfy the condition that $N_1\geq\frac{2m\cdot\log(16m/\veps)}{\delta^2}$, again depending on the gap $\delta$}. However, neither our \Cref{alg:Idenbasis} nor \Cref{alg:Twophase} requires the knowledge of $\delta$, {and} we can estimate $\delta$ within a factor of $2$ according to \Cref{thm:EstimateDelta}, by solving mixed integer programs with modern LP solvers as shown in \Cref{sec:oracle}. Therefore, we can use the estimate $\delta'$ obtained by \Cref{alg:estimateDelta} to determine the value of $N_1$ to serve as the input to \Cref{alg:Twophase}. Note that from \Cref{thm:EstimateDelta}, obtaining the estimate $\delta'$ requires a number of samples bounded by $\order\left(\frac{m}{\delta^2}\cdot\log(m/\eps)\right)$, which is captured by the sample complexity bound presented in \Cref{thm:sampleComplexity}.

{The bound in \Cref{thm:sampleComplexity} depends on the parameter $\sigma$, the condition number of the matrix $A^*$ in \Cref{eqn:012401}. This dependence is a consequence of our objective and estimation setup: we aim to compute an $\veps$-close NE by approximating the optimal solution $\bx^*$ using samples of the game matrix $A$, so estimation error in $A$ propagates to the solution. The parameter $\sigma$ captures this propagation, as is standard in the sensitivity analysis of linear systems (e.g., \citep{higham2002accuracy}). Therefore, we regard the presence of $\sigma$ as an inherent consequence of adopting LP-resolving algorithms. Similar condition-number dependencies appear in prior work on LP-resolving with instance-dependent guarantees (e.g., \citep{vera2021bayesian, bumpensanti2020re, li2021symmetry, jiang2022degeneracy}), where the constraint-matrix condition number enters regret bounds, which can be translated into sample-complexity guarantees.}



\subsection{$\delta$-Independent Sample Complexity}\label{sec: instance-independent}

In this section, we show that \Cref{alg:Twophase} also achieves a $\delta$-independent sub-optimality gap with $N$ samples, complementing the $\delta$-dependent sample complexity proven in \Cref{sec: instance-independent}. {Note that for a fixed sample size $N$, it is possible that the output of \Cref{alg:Twophase}, $\wh{\calI}^*$ and $\wh{\calJ}^*$, does not equal to $\calI^*$ and $\calJ^*$ defined in \Cref{def:istar_jstar}. To address this issue, we denote by $\mathcal{F}'$ the collection of all possible support $(\calI, \calJ)$ that forms a basis to $\VI$.
To be specific, $\mathcal{F}'$ contains all $(\calI, \calJ)\subset[m_1]\times[m_2]$ such that $|\calI|=|\calJ|$ and the square matrix $M(\calI, \calJ)=
\begin{bmatrix}
A^\top_{\calI, \calJ} & -\bm{e}^{|\calJ|} \\
(\bm{e}^{|\calI|})^\top & 0
\end{bmatrix}$ is non-singular. Formally, we define 
\begin{align}\label{eqn:Fprime}
    \calF' = \left\{(\calI,\calJ)\in[m_1]\times[m_2]: |\calI|=|\calJ|, \begin{bmatrix}
A^\top_{\calI, \calJ} & -\bm{e}^{|\calJ|} \\
(\bm{e}^{|\calI|})^\top & 0
\end{bmatrix} \text{~is non-singular}\right\}.
\end{align}

Then, for any $(\calI, \calJ)\in\mathcal{F}'$, we denote by $d_{\calI, \calJ}=|\calI|=|\calJ|$ and $\sigma_{\calI, \calJ}$ the smallest singular value of the square matrix $M(\calI, \calJ)$. From the definition of $\mathcal{F}'$, we know that $\sigma_{\calI, \calJ}>0$ for any $(\calI, \calJ)\in\mathcal{F}'$. Note that when the number of samples is independent of $\delta$, it can happen that \Cref{alg:Twophase} approximates the solution corresponding to any support $(\calI, \calJ)\in\mathcal{F}'$. We now define the following parameters corresponding to the \textit{worst-case} support $(\calI, \calJ)\in\mathcal{F}'$:}
\begin{equation}\label{eqn:worstcaseIJ}
d_0=\max_{(\calI, \calJ)\in\mathcal{F}'}\{d_{\calI, \calJ}\}.
\end{equation}
We are now ready to state our $\delta$-independent sample complexity in the following theorem.
\begin{theorem}\label{thm:sample-complexity-independent}
Given a fixed $\veps>0$ and any $\alpha>0$, \Cref{alg:Twophase} with input $\veps$ and $N_1=\frac{\alpha\cdot m}{\veps^2}$ guarantees that 
$\mathbb{E}[\bar{\bx}]$ forms a feasible solution to $\VI$ with an sub-optimality gap bounded by
\[
\veps\cdot \left(\frac{1}{2}+\left(1+\frac{16d_0^3}{\sigma_0}\right)\cdot \sqrt{\frac{1}{2\alpha}\cdot\left(\log(16m/\veps)+2\log\log(m/\sigma_0^2)\right)}\right),
\]
where $\sigma_0$ is defined in \Cref{def:sigma0}.
Moreover, with probability at least $1-\veps/4$, the total number of samples needed by \Cref{alg:Twophase} is upper bounded by
 \begin{align}\label{eqn:sample_complexity_delta_ind}
N = \order\left( \frac{\alpha\cdot m}{\veps^2} + \frac{m}{\sigma_0^2}\cdot\log(m/\veps)+ \frac{d_0^{15/2}}{\sigma_0^3}\cdot\frac{\log(m/\veps)}{\veps}\right).
 \end{align}
where $d_0$ is defined in \Cref{eqn:worstcaseIJ}.
\end{theorem}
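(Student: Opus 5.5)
The plan is to combine the three pieces already in place: part 2 of \Cref{prop:DoublingTrick}, which gives a sub-optimality bound for the basic solution $\bx^*(\wh{\calI}^*,\wh{\calJ}^*)$; \Cref{lem:resolving}, which controls $\|\E[\bar{\bx}]-\bx^*(\wh{\calI}^*,\wh{\calJ}^*)\|_2$; and part 3 of \Cref{prop:DoublingTrick} together with line 6 of \Cref{alg:Twophase}, which count samples.

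\textbf{Step 1: sub-optimality of the identified basis.} On the event of part 2, the basic solution $\bx^*(\wh{\calI}^*,\wh{\calJ}^*)$ is feasible with gap $\order(\wh{\kappa}\cdot\Rad(N'/m,\eps/m))$, where $\eps=\veps/(8K^2)$. I will make this explicit in three moves.
\begin{itemize}
\item \emph{Condition number.} The test in line 14 of \Cref{alg:Idenbasis} forces $\wh{\sigma}_{\wh{\calI}^*,\wh{\calJ}^*}>d^2\Rad$. On the concentration event, $(\wh{\calI}^*,\wh{\calJ}^*)\in\calF'$, so $\sigma_{\wh{\calI}^*,\wh{\calJ}^*}\ge 2d^2\sigma_0$. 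Weyl's inequality then gives $\wh{\sigma}\ge \sigma_0 d^2$. The largest singular value is at most $\order(d)$, so $\wh{\kappa}\le \order(d_0/\sigma_0)$ up to polynomial factors in $d_0$. Following the $(d^N)^2\wh{\kappa}$ factor in \Cref{thm:FiniteGap}, I aim for a constant of the form $16d_0^3/\sigma_0$.
\item \emph{Sample size.} Since $N'\ge N_1=\alpha m/\veps^2$, we get $\Rad(N'/m,\eps/m)\le \veps\sqrt{\log(2m/\eps)/(2\alpha)}$.
\item \emph{Number of doublings.} Part 3 gives $K\le \order(\log(m/\sigma_0^2))$. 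Hence $\log(2m/\eps)=\log(16mK^2/\veps)\le \log(16m/\veps)+2\log\log(m/\sigma_0^2)$, which produces the square-root term in the statement.
\end{itemize}
The additive "$1+$" in $(1+16d_0^3/\sigma_0)$ comes from the value gap $|\VI-\VI_{\wh{\calI}^*}|=\order(\Rad)$ in \Cref{eqn:012310}.

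\textbf{Step 2: pass to $\E[\bar{\bx}]$.} By \Cref{lem:resolving}, $\|\E[\bar{\bx}]-\bx^*(\wh{\calI}^*,\wh{\calJ}^*)\|_2\le\veps/(2d)$. Since $A\in[-1,1]^{m_1\times m_2}$ and only the $d$ coordinates in $\wh{\calI}^*$ are nonzero, each entry of $A^\top(\E[\bar{\bx}]-\bx^*)$ is at most $\|\cdot\|_1\le\sqrt d\,\|\cdot\|_2\le \veps/2$. This gives the extra $\veps/2$ term. For feasibility, $\bar{\bx}\succeq 0$ because of the projection in line 10, but its coordinates need not sum to one. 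I will use the bottom row of the linear system from Step 1 of \Cref{sec:resolving}, which gives $\E[\sum_i\bar{x}_i]=1$; if that identity fails under projection, I will renormalize and absorb the error into $\veps/2$. The expectation over the random basis is handled by conditioning on the basis-identification phase, which is independent of the resolving samples, and taking the worst case over $\calF'$.

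\textbf{Step 3: sample count.} The total is $N_2$ plus the resolving length set in line 6. Part 3 of \Cref{prop:DoublingTrick} gives $N_2\le N_1+\order(\frac{m}{\sigma_0^2}\log(m/\veps))$. The estimate from \Cref{alg:estimateSigma} satisfies $\sigma'\ge\sigma_{\wh{\calI}^*,\wh{\calJ}^*}/2\ge d^2\sigma_0$, and $d\le d_0$, so the resolving length is $\order\big(\frac{d_0^{15/2}}{\sigma_0^3}\frac{\log(m/\veps)}{\veps}\big)$. A union bound over the failure events yields probability $1-\veps/4$.

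The main obstacle is Step 1. The gap must be bounded uniformly over whichever basis in $\calF'$ is returned, and the factor $\wh{\kappa}$ must be converted into the deterministic constant $16d_0^3/\sigma_0$ using only the threshold test and $\sigma_0$. I will check carefully that the threshold test guarantees membership in $\calF'$ on the concentration event.
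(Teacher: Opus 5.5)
Your proposal follows the paper's proof essentially step for step. The paper likewise takes the gap of $\bx^*(\wh{\calI}^*,\wh{\calJ}^*)$ from \Cref{thm:FiniteGap} via \Cref{prop:DoublingTrick}, turns $\wh{\kappa}$ into a $\sigma_0$-dependent constant using the line-14 threshold and singular-value perturbation, combines $N'\ge N_1=\alpha m/\veps^2$ with the bound on $K$ to get the square-root factor, adds $\veps/2$ from \Cref{lem:resolving}, and counts samples from part 3 of \Cref{prop:DoublingTrick} and line 6 using $d\le d_0$, $\sigma'\ge\sigma_0$. The differences are cosmetic: you lower-bound $\wh{\sigma}_{\min}\ge d^2\sigma_0$ directly instead of going through $\wh{\kappa}\le 4\kappa'\le 4d'/\sigma'$, and your concern that the projection may break $\sum_i\E[\bar{x}_i]=1$ is a point the paper simply assumes in \Cref{eqn:OptQ10}.
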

In order to choose the $\alpha$ such that the sub-optimality gap is bounded by $\veps$, one can adopt a doubling trick to exponentially increase the value of $\alpha$ and check whether the following inequality holds,
\[
\left(1+\frac{32d^{3}}{\sigma'}\right)\cdot \sqrt{\frac{1}{2\alpha}\cdot\left(\log(16m/\veps) + 2\log\log(4m/\sigma^{'2})\right)}\leq\frac{1}{2},
\]
where $d$ and $\sigma'$ are defined in \Cref{alg:Twophase}.
{While in fact, if one purely seeks a worst-case $\order(1/\veps^2)$ sample complexity, one can also directly estimate the LP $\VI$ and solve it, we present our $\delta$-independent bound in \Cref{thm:sample-complexity-independent} to demonstrate the robustness of our \Cref{alg:Twophase}.


We provide a proof sketch of \Cref{thm:sample-complexity-independent} with the full proof deferred to \Cref{app:sample-complexity-independent}. Note that following \Cref{thm:FiniteGap}, for a fixed sample size $N'$, \Cref{alg:Idenbasis} outputs index sets $\mathcal{I}^{N'}, \mathcal{J}^{N'}$ such that the variable $\bx^*(\mathcal{I}^{N'}, \mathcal{J}^{N'})$, as defined in \Cref{eqn:IJOptQ}, forms a feasible solution to $\VI$ with an suboptimality gap $\widetilde{\order}(1/\sqrt{N'})$. Then, we use the index sets $\mathcal{I}^{N'}, \mathcal{J}^{N'}$ as the input to \Cref{alg:Twophase}. Note that given the index sets $\mathcal{I}^{N'}$ and $\mathcal{J}^{N'}$, the resolving steps in \Cref{alg:Twophase} essentially approximate the solution to the linear equations given in \Cref{eqn:IJOptQ} whose suboptimality gap is $\widetilde{\order}(1/\sqrt{N'})$. This is where the first term in \Cref{eqn:sample_complexity_delta_ind} comes from. Following the same procedure as the proof of \Cref{thm:sampleComplexity}, we can show that after $N'$ steps of the resolving process, the gap between $\mathbb{E}[\bar{\bx}]$ and $\bx^*(\mathcal{I}^{N'}, \mathcal{J}^{N'})$ is upper bounded by $\widetilde{\order}(1/N')$, which {corresponds to the second term in the sample complexity shown in \Cref{eqn:sample_complexity_delta_ind}}.

\section{Combining Two Steps Together}\label{sec:Combine}
Now we analyze the overall performance of our algorithm. Specifically, combining \Cref{thm:Infibasis2} and  \Cref{thm:FiniteGap} for the support identification step and \Cref{thm:sampleComplexity} and \Cref{thm:sample-complexity-independent} for the LP resolving step, we show the sample complexity of finding $\veps$-suboptimality-gap NE in the following theorem.  

\begin{theorem}\label{thm:BestBoth}
For a fixed accuracy level $\veps>0$, denote by $\bar{\bx}$ the output of \Cref{alg:Twophase} with $N$ samples. Then, \Cref{alg:Twophase} guarantees that $\mathbb{E}[\bar{\bx}]$ is a $\veps$-sub-optimality-gap with $N$ bounded by
\[
N = \order\left( \frac{d_0^6\cdot m}{\sigma_0^2\cdot\veps^2}+\frac{d_0^{15/2}}{\sigma_0^3}\cdot\frac{\log(m/\veps)}{\veps}\right)~\text{and}~N=\order\left(\left(\frac{m}{\sigma_0^2}+ \frac{m}{\delta^2}\right)\cdot\log(m/\veps) + \frac{(d^*)^{15/2}}{\sigma^3}\cdot\frac{\log(m/\veps)}{\veps} \right)
\]
with respect to different choices of $N_1$, 
where $d_0$ and $\sigma_0$ are defined in \Cref{eqn:worstcaseIJ}, $\delta=\min\{\delta_1, \delta_2\}$ with $\delta_1$ and $\delta_2$ defined in \Cref{def:delta}, $\sigma>0$ denotes the minimum singular value of $A^*$ defined in \Cref{def:istar_jstar}, and $d^*=|\calI^*|=|\calJ^*|$. 
\end{theorem}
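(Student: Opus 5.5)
The statement combines two regimes, so I will prove it by choosing $N_1$ in two ways and invoking \Cref{thm:sample-complexity-independent} for the first bound and \Cref{thm:sampleComplexity} for the second. In both cases the output $\mathbb{E}[\bar{\bx}]$ must be shown to be an $\veps$-sub-optimality-gap solution. The sample count then follows directly from the bounds in those theorems.

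\textbf{Second bound ($\delta$-dependent).} I take $N_1 = \frac{2m\log(16m/\veps)}{\delta^2}$, or an estimate within a factor of $2$ via \Cref{alg:estimateDelta}, whose cost $\order(\frac{m}{\delta^2}\log(m/\veps))$ is absorbed. \Cref{thm:sampleComplexity} gives $\min_{\bx^*\in\calX^*}\|\mathbb{E}[\bar{\bx}]-\bx^*\|_2\le\veps$. Since $A\in[-1,1]^{m_1\times m_2}$, the map $\bx\mapsto\max_{\by}\bx^\top A\by$ is $1$-Lipschitz in $\ell_1$. Hence the gap is at most $\|\mathbb{E}[\bar{\bx}]-\bx^*\|_1$, which is at most $\sqrt{d}\,\veps$ on the support; it can also be bounded via the sharper $\veps/(2d)$ guarantee of \Cref{lem:resolving} together with $\bx^*=\bx^*(\wh{\calI}^*,\wh{\calJ}^*)$. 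Rescaling $\veps$ by a constant (or by $d$, which only affects logarithmic and polynomial-in-$d$ constants already present) yields an $\veps$-sub-optimality gap. The sample bound is exactly the one in \Cref{thm:sampleComplexity}.

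\textbf{First bound ($\delta$-independent).} I apply \Cref{thm:sample-complexity-independent} with $N_1=\alpha m/\veps^2$. The gap there is $\veps\bigl(\tfrac12 + (1+16d_0^3/\sigma_0)\sqrt{\tfrac{1}{2\alpha}(\log(16m/\veps)+2\log\log(m/\sigma_0^2))}\bigr)$. I choose $\alpha$ so that the square-root term is at most $\tfrac12$, namely
\[
\alpha = 2\left(1+\tfrac{16d_0^3}{\sigma_0}\right)^2\left(\log(16m/\veps)+2\log\log(m/\sigma_0^2)\right) = \order\!\left(\tfrac{d_0^6}{\sigma_0^2}\log(m/\veps)\right).
\]
This makes the gap at most $\veps$. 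Substituting into \Cref{eqn:sample_complexity_delta_ind} gives $\frac{\alpha m}{\veps^2}=\order(\frac{d_0^6 m}{\sigma_0^2\veps^2})$ up to logarithmic factors. The term $\frac{m}{\sigma_0^2}\log(m/\veps)$ is dominated by this one, and the $\frac{d_0^{15/2}}{\sigma_0^3}\frac{\log(m/\veps)}{\veps}$ term is kept as is. In practice $\alpha$ is set by the doubling check after \Cref{thm:sample-complexity-independent}, with $\sigma'$ in place of $\sigma_0$; the factor-$2$ accuracy of $\sigma'$ changes $\alpha$ only by constants.

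\textbf{Main obstacle.} The delicate part is the logarithmic bookkeeping in the first bound. The stated rate $\frac{d_0^6 m}{\sigma_0^2\veps^2}$ hides the $\log(m/\veps)$ factor coming from $\alpha$, so I will either absorb it into the $\order$ (treating it as $\otil$) or note that the paper's convention permits this. A second, smaller issue is that the high-probability events (probability $1-\order(\veps)$) must be reconciled with the statement about $\mathbb{E}[\bar{\bx}]$. I will handle this by conditioning on the good event for the support identification. On the complementary event the contribution to the expectation is bounded, since strategies are bounded and the payoffs lie in $[-1,1]$, so it adds only $\order(\veps)$ to the gap; this is then absorbed by rescaling $\veps$.
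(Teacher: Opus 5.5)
Your proposal is correct and follows the paper's proof essentially step for step. For the first bound you choose $\alpha$ so that the square-root term in \Cref{thm:sample-complexity-independent} is at most $\tfrac12$. For the second you take $N_1\ge \frac{2m\log(16m/\veps)}{\delta^2}$ and convert the $\veps/(2d)$ distance guarantee of \Cref{lem:resolving} on the (then optimal) basis into a sub-optimality gap via a Lipschitz estimate; the paper uses $\|A^\top(\bx^*-\mathbb{E}[\bar{\bx}])\|_\infty\le d^*\|\bx^*-\mathbb{E}[\bar{\bx}]\|_2$ instead of your $\ell_1$ bound, and you should use this \Cref{lem:resolving} route rather than the nearest point of $\calX^*$, which need not be supported on $\wh{\calI}^*$. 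The $\log(m/\veps)$ factor inside $\alpha$ that you flag is also dropped silently in the paper's own proof, so in both arguments the first bound is really an $\otil(\cdot)$ statement.
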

The first term presented in \Cref{thm:BestBoth} corresponds to the $\delta$-independent bound presented in \Cref{thm:sample-complexity-independent} and the second term presented in \Cref{thm:BestBoth} corresponds to the $\delta$-dependent bound presented in \Cref{thm:sampleComplexity}. {We remark again that our algorithm does not require the knowledge of these problem-dependent parameters to achieve the bounds shown in \Cref{thm:BestBoth}. Moreover, in \Cref{app:parameter_estimation}, we present algorithms showing how to approximate the problem parameters using the same sample complexity.} However, if we further know the value of $\delta$ (or have an estimate of the value of $\delta$), then we can select the appropriate value of $N_1$ such that we achieve a sample complexity that is the minimum of the two terms presented in \Cref{thm:BestBoth}, which is formalized in the following corollary. The full proof of \Cref{coro:BestBoth} is deferred to \Cref{app:thm10}.
\begin{corollary}\label{coro:BestBoth}
For a fixed accuracy level $\veps>0$, denote by $\bar{\bx}$ the output of \Cref{alg:Twophase} with $N$ samples. Then, there exists a choice of $N_1$ to serve as input such that \Cref{alg:Twophase} guarantees that $\mathbb{E}[\bar{\bx}]$ enjoys a $\veps$-sub-optimality-gap with $N$ bounded by
\[
N = \order\left(\min\left\{\frac{d_0^{15/2}}{\sigma_0^3}\cdot\frac{\log(m/\veps)}{\veps} + \frac{d_0^6\cdot m}{\sigma_0^2\cdot\veps^2}, \left(\frac{m}{\sigma_0^2}+ \frac{m}{\delta^2}\right)\cdot\log(m/\veps) + \frac{(d^*)^{15/2}}{\sigma^3}\cdot\frac{\log(m/\veps)}{\veps}\right\}\right).
\]
\end{corollary}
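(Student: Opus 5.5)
The plan is a case split on which of the two bounds in \Cref{thm:BestBoth} is smaller, with $N_1$ chosen to activate the better regime. Both bounds in \Cref{thm:BestBoth} come from different values of the single input $N_1$ of \Cref{alg:Twophase}. Since the corollary only asks for the \emph{existence} of a good $N_1$, I would let the choice of $N_1$ depend on instance parameters such as $\delta$ and $\veps$. The statement explicitly allows this, noting that $\delta$ is either known or estimated via \Cref{alg:estimateDelta} and \Cref{thm:EstimateDelta}.

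\textbf{Case 1.} Suppose the $\delta$-dependent bound is the smaller one, i.e.
\[
\left(\tfrac{m}{\sigma_0^2}+\tfrac{m}{\delta^2}\right)\log(m/\veps)+\tfrac{(d^*)^{15/2}}{\sigma^3}\tfrac{\log(m/\veps)}{\veps} \le \tfrac{d_0^{15/2}}{\sigma_0^3}\tfrac{\log(m/\veps)}{\veps}+\tfrac{d_0^6 m}{\sigma_0^2\veps^2}.
\]
Then I set $N_1=\frac{2m\log(16m/\veps)}{\delta^2}$.
\begin{itemize}
\item \Cref{thm:sampleComplexity} gives $\min_{\bx^*\in\calX^*}\|\E[\bar\bx]-\bx^*\|_2\le\veps$.
\item Because $A\in[-1,1]^{m_1\times m_2}$, the map $\bx\mapsto\max_{\by}\bx^\top A\by$ is Lipschitz with constant $\order(1)$ in $\|\cdot\|_2$, up to a $\sqrt{m_2}$ factor. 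So the sub-optimality gap of $\E[\bar\bx]$ is $\order(\veps)$. Rescaling $\veps$ by a constant (or by $\sqrt{m_2}$, which only affects log terms and the $1/\veps$ factor) gives the $\veps$-sub-optimality-gap guarantee.
\item The sample count is the second term in the minimum, with probability $1-\order(\veps)$, by \Cref{thm:sampleComplexity} and part~1 of \Cref{prop:DoublingTrick}.
\end{itemize}

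\textbf{Case 2.} Otherwise, I invoke \Cref{thm:sample-complexity-independent} with $N_1=\alpha m/\veps^2$. I choose $\alpha$ so that the gap bound there is at most $\veps$, which requires
\[
\left(1+\tfrac{16d_0^3}{\sigma_0}\right)\sqrt{\tfrac{1}{2\alpha}\left(\log(16m/\veps)+2\log\log(m/\sigma_0^2)\right)}\le \tfrac12 .
\]
It suffices to take $\alpha=\Theta\bigl((d_0^6/\sigma_0^2)\log(m/\veps)\bigr)$, using $d_0\ge1$, $\sigma_0\lesssim1$, and absorbing the $\log\log$ term. Substituting into \Cref{eqn:sample_complexity_delta_ind}:
\begin{itemize}
\item $\alpha m/\veps^2$ gives the $\frac{d_0^6 m}{\sigma_0^2\veps^2}$ term, up to logs hidden consistently with \Cref{thm:BestBoth}'s first bound.
\item The middle term $\frac{m}{\sigma_0^2}\log(m/\veps)$ is dominated by it.
\item The last term is the $\frac{d_0^{15/2}}{\sigma_0^3}\frac{\log(m/\veps)}{\veps}$ term.
\end{itemize}

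Taking the better of the two cases yields the minimum. The only delicate point is bookkeeping. Case~2 must reproduce exactly the first expression of \Cref{thm:BestBoth} (I would simply cite that theorem's first bound). The choice of case uses unknown parameters, which is fine because the statement is existential in $N_1$. I expect no real obstacle beyond verifying that the $\alpha$ choice makes the $\log\log$ term harmless.
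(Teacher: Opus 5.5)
Your case split, your two choices of $N_1$ ($N_1=2m\log(16m/\veps)/\delta^2$ when the $\delta$-dependent term is smaller, and $N_1=\alpha m/\veps^2$ with $\alpha$ satisfying the displayed inequality otherwise) and your Case~2 are exactly the paper's proof. The $\log(m/\veps)$ factor that your choice $\alpha=\Theta((d_0^6/\sigma_0^2)\log(m/\veps))$ puts into the $\frac{d_0^6 m}{\sigma_0^2\veps^2}$ term is also silently dropped in the paper's \Cref{thm:BestBoth}, so you are consistent with it there.

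The step that fails is the conversion from $\ell_2$-closeness to a sub-optimality gap in Case~1.

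\emph{Why the Lipschitz argument loses a factor.} The map $\bx\mapsto\max_{\by}\bx^\top A\by$ is $1$-Lipschitz in $\|\cdot\|_1$. It is therefore only $\sqrt{m_1}$-Lipschitz in $\|\cdot\|_2$; the relevant dimension is $m_1$, not $m_2$. This factor is tight. Take $A=[\bm{s},-\bm{s}]$ with $\bm{s}$ a balanced sign vector. For small $\veps$, the point $\bx=\frac{1}{m_1}\bm{e}^{m_1}+\frac{\veps}{\sqrt{m_1}}\bm{s}$ is at $\ell_2$-distance exactly $\veps$ from $\calX^*$, yet its gap is $\sqrt{m_1}\,\veps$.

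\emph{Why rescaling does not repair it.} From the $\ell_2$ statement of \Cref{thm:sampleComplexity} alone you only get a $\sqrt{m_1}\veps$ gap. Rescaling $\veps\to\veps/\sqrt{m_1}$ then multiplies $\frac{(d^*)^{15/2}}{\sigma^3}\frac{\log(m/\veps)}{\veps}$ by $\sqrt{m_1}$. That is a polynomial factor in $m$, which the stated $\order(\cdot)$ does not absorb, since this term depends on $d^*$ and not on $m$. The remark after \Cref{def:eps-duality} glosses over the same factor, so you cannot lean on it either.

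\emph{The fix.} Do in Case~1 what you already do in Case~2: cite the second bound of \Cref{thm:BestBoth}. Its proof uses extra structure. On the good event $\wh{\calI}^*=\calI^*$, the error $\E[\bar{\bx}]-\bx^*(\calI^*,\calJ^*)$ is supported on $\calI^*$, and \Cref{lem:resolving} bounds its $\ell_2$-norm by $\veps/(2d^*)$. That target is why line~6 of \Cref{alg:Twophase} uses $d^{15/2}$ rather than $d^{13/2}$. Hence $\|A^\top(\E[\bar{\bx}]-\bx^*)\|_\infty\le\sqrt{d^*}\cdot\veps/(2d^*)\le\veps/2$, with no dependence on $m$. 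The bad event then contributes only $\order(\veps)$, because it has probability $\order(\veps)$ and the gap is $\order(1)$.
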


\subsection{Results for the Dual Player}
In this section, we briefly discuss how we derive an approximate NE for the column player. 
Specifically, we apply the exact same procedure, \Cref{alg:Idenbasis} followed by \Cref{alg:Twophase}, but to the dual LP $\VD$ in \Cref{eqn:dual} instead. We then obtain the output  $\bar{\by}$ of \Cref{alg:Twophase} as the approximation to $\by^*$ for the dual player. Note that all our previous results for the primal LP in \Cref{eqn:primal} still applies to the dual LP. The only difference is that the problem-dependent constants are now defined for the dual LP instead. For example, we can similarly define the LP
\[
\begin{aligned}
\VD_{\calJ}=& \max&&\nu && \VI_{\calI, \calJ}= && \min&&\mu \\
&~~ \mbox{s.t.} &&\nu\cdot\bm{e}^{m_1}\preceq A\bm{y} &&  &&~~\mbox{s.t.} &&\mu\cdot\bm{e}^{|\calJ|}\succeq A_{:,\calJ}^\top\bm{x}\\
& &&(\bm{e}^{m_2})^\top\bm{y}=1 && && &&(\bm{e}^{m_1})^\top\bm{x}=1\\
& &&\bm{y}_{\mathcal{J}^c}=\mathbf{0} && && &&\bm{x}_{\mathcal{I}^c}=\mathbf{0}\\
& &&\bm{y}\succeq\mathbf{0}, \nu\in\mathbb{R}, && && &&\bm{x}\succeq\mathbf{0}, \mu\in\mathbb{R}.
\end{aligned}
\]
Then, 
the problem parameters presented in Definition \ref{def:delta} can be modified into the following:
\begin{definition}\label{def:delta2}
    Define $$\delta^{\mathrm{Dual}}_1\triangleq\min_{\mathcal{J}\subseteq[m_2]}\{\VD-\VD_{\mathcal{J}}: \VD-\VD_{\mathcal{J}}>0\}$$ to be the minimum non-zero primal gap where $\VD$ is defined in \Cref{eqn:dual} and we further restrict $\by_{\mathcal{J}^c}=\bm{0}$ to obtain $\VD_{\mathcal{J}}$ . Define $$\delta^{\mathrm{Dual}}_2\triangleq\min_{\mathcal{I}, \mathcal{J}}\{\VI_{\mathcal{I}, \mathcal{J}}-\VI_{[m_1],\calJ}: \VI_{\mathcal{I}, \mathcal{J}}-\VI_{[m_1],\calJ}>0, \VI_{[m_1], \calJ} = \VI\}$$ to be the minimum non-zero dual gap. 
    Define $\delta^{\mathrm{Dual}} \triangleq \min\{\delta_1^{\mathrm{Dual}},\delta_2^{\mathrm{Dual}}\}$.
\end{definition}
Similarly, we are able to define $\sigma^{\mathrm{Dual}}$, $\sigma_0^{\mathrm{Dual}}$, $d^{\mathrm{Dual}}$, and $d_0^{\mathrm{Dual}}$ correspondingly to the dual system.
Then, following the same procedure as \Cref{thm:sampleComplexity}, we know that by choosing a input sample size $N_1$ to \Cref{alg:Idenbasis} satisfying $N_1=\Theta\left(\frac{m\cdot\log(m/\veps)}{(\delta^{\mathrm{Dual}})^2} \right)$, 
\Cref{alg:Twophase} outputs a solution $\bar{\by}$ such that $\|\by^*-\mathbb{E}[\bar{\by}]\|_2 \leq \veps$, for some optimal dual solution $\by^*$, with a sample complexity bounded by
\[
N=\order\left(\left(\frac{m}{(\sigma_0^{\mathrm{Dual}})^2}+ \frac{m}{(\delta^{\mathrm{Dual}})^2}\right)\cdot\log(m/\veps) + \frac{(d^{\mathrm{Dual}})^{15/2}}{(\sigma^{\mathrm{Dual}})^3}\cdot\frac{\log(m/\veps)}{\veps} \right)
\]
Therefore, applying \Cref{alg:Idenbasis} and \Cref{alg:Twophase} separately to both $x$-player and $y$-player 
leads to the output $(\bar{\bx}, \bar{\by})$ such that $(\mathbb{E}[\bar{\bx}], \mathbb{E}[\bar{\by}])$ is a $\veps$-close NE. A similar $\delta^{\mathrm{Dual}}$-independent sample complexity guarantee
\[
N = \order\left( \frac{(d^{\mathrm{Dual}}_0)^6\cdot m}{(\sigma_0^{\mathrm{Dual}})^2\cdot\veps^2}+\frac{(d_0^{\mathrm{Dual}})^{15/2}}{(\sigma_0^{\mathrm{Dual}})^3}\cdot\frac{\log(m/\veps)}{\veps}\right)
\]
can be obtained by adapting \Cref{thm:FiniteGap} and \Cref{thm:sample-complexity-independent} to the dual LP.
\section{Parameters Estimation}\label{app:parameter_estimation}

In this section, we show how to estimate the unknown problem-dependent parameters mentioned in previous sections. First, we show that the gap $\delta$ related to LP defined in \Cref{def:delta} can be estimated {practically} efficiently via a mixed integer programming oracle using finite samples.
Specifically, for a general LP $V$ of the form:
\[
V = \min~ \bm{c}_0^\top\bx~~~\mbox{s.t.}~A_0\bx\preceq \bm{b}_0, ~\bm{0}\preceq \bx\preceq \bm{1}, 
\]
where the decision variable $\bx\in\mathbb{R}^{m_0}$. For any set $\calI\subset[m_0]$, we also denote by 
\[
V_{\calI} = \min~ \bm{c}_0^\top\bx~~~\mbox{s.t.}~A_0\bx\preceq \bm{b}_0, ~\bx_{\calI}=\bm{0}, ~\bm{0}\preceq \bx\preceq \bm{1}. 
\]
We assume that we have access to an oracle that can solve the following minimum non-zero suboptimality gap problem
\begin{equation}\label{eqn:012701}
    \min_{\mathcal{I}\subset[m_0]} \{ V_{\mathcal{I}} - V: V_{\mathcal{I}} - V > 0  \}
\end{equation}
for a given index set $[m_0]$ and a given LP $V$. We present a concrete algorithm for such oracle in detail in \Cref{sec:oracle}, which is based on solving a mixed integer programming and can be carried out by calling modern LP solvers. 
Based on this oracle, our algorithm is formally presented in \Cref{alg:estimateDelta}, which continuously samples $i_n\in[m_1]$ and $j_n\in[m_2]$ lexicographically and calls the oracle to find the corresponding $\wh{\delta}_1$ and $\wh{\delta}_2$ for the empirical average matrix. Once $\min\{\wh{\delta}_1,\wh{\delta}_2\}$ is sufficiently large compared to $\Rad(n/m, \eps/m)$ as defined in line 8, with a high probability, we can approximate $\delta$ within a constant factor.

\begin{algorithm}[t]
\caption{Algorithm to estimate $\delta$}
\label{alg:estimateDelta}
\begin{algorithmic}[1]
\State \textbf{Input:} a failure probability $\epsilon>0$, an oracle $\calA$ that can solve the problem in \Cref{eqn:012701}.

\For{$n=1, 2, \dots, $ until stopped}
\State Let $ a_n = (n ~\mathrm{mod}~m)+1$ and compute $a_n = j_n\cdot m_1+i_n$ with $i_n$ and $j_n$ are integers and $i_n\in\{1,\dots, m_1\}$ and $j_n\in\{1, \dots, m_2\}$.

\State Take the primal action $i_n$ and the dual action $j_n$, and observe $\wt{A}_{n, i_n,j_n} = A_{i_n, j_n} + \eta_n$.

\State Update $\mathcal{H}^{n+1}=\mathcal{H}^n\cup\{(i_n, j_n, \wt{A}_{n, i_n,j_n}\}$.

\State Construct the empirical mean matrix $\wh{A}$ using $\calH^{n+1}$ and LP $\wh{V}^{\mathrm{Prime}}$.

\State Call $\calA$ with input $\wh{V}^{\mathrm{Prime}}$ to compute 
\begin{equation}\label{eqn:012702}
    \wh{\delta}_1 = \min_{\mathcal{I}\subseteq[m_1]} \{ \wh{V}^{\mathrm{Prime}}_{\mathcal{I}} - \wh{V}^{\mathrm{Prime}}: \wh{V}^{\mathrm{Prime}}_{\mathcal{I}}-\wh{V}^{\mathrm{Prime}}>0  \}
\end{equation}
and compute
\begin{equation}\label{eqn:012703}
    \wh{\delta}_2=\min_{\mathcal{I}\subseteq [m_1], \mathcal{J}\subseteq [m_2]}\{\wh{V}^{\mathrm{Dual}}_{\mathcal{I},[m_2]} - \wh{V}^{\mathrm{Dual}}_{\mathcal{I}, \mathcal{J}}: \wh{V}^{\mathrm{Dual}}_{\mathcal{I},[m_2]} - \wh{V}^{\mathrm{Dual}}_{\mathcal{I}, \mathcal{J}} >0, \wh{V}^{\mathrm{Dual}}_{\mathcal{I},[m_2]}=\wh{V}^{\mathrm{Prime}}\}.
\end{equation}

\State Stop if $\wh{\delta}=\min\{\wh{\delta}_1, \wh{\delta}_2\}\geq 4\cdot\Rad(n/m, \eps/m)$ with $\Rad(n/m, \eps/m)=\sqrt{\frac{m\cdot\log(2m/\eps)}{2n}}$.
\EndFor

\State \textbf{Output:} the estimation $\wh{\delta}$ and the sample set $\mathcal{H}^{n+1}$.
\end{algorithmic}
\end{algorithm}

We provide a brief explanation on why \Cref{alg:estimateDelta} can be used to give an estimate of the gap $\delta$. Note that following standard Hoeffding inequality, for any $\eps>0$, we know that the gap between each element of the matrix $A$ and $\wh{A}$ is upper bounded by $\Rad(n/m, \eps/m)$ with probability at least $1-\eps$, where
\[
\Rad(n/m, \eps/m)=\sqrt{\frac{m\cdot\log(2m/\eps)}{2n}}.
\]
Under this high probability event, we know that the value between $\VI_{\mathcal{I}}$ and $\wh{V}_{\mathcal{I}}^{\mathrm{Prime}}$, as well as $\VD_{\mathcal{I}, \mathcal{J}}$ and $\wh{V}^{\mathrm{Dual}}_{\mathcal{I}, \mathcal{J}}$, for any $\mathcal{I}\subset[m_1]$ and $\mathcal{J}\subset[m_2]$, is upper bounded by $\Rad(n/m,\eps/m)$. This result further implies that
\begin{align*}
    |\delta_1-\wh{\delta}_1|\leq 2\cdot\Rad(n/m,\eps/m) \text{~~and~~}
|\delta_2-\wh{\delta}_2|\leq 2\cdot\Rad(n/m,\eps/m).
\end{align*}
From the above two inequalities, we can derive that
\[
|\delta-\wh{\delta}|\geq 2\cdot\Rad(n/m,\eps/m),
\]
where $\delta=\min\{\delta_1,\delta_2\}$ and $\wh{\delta}=\min\{\wh{\delta}_1,\wh{\delta}_2\}$.
Therefore, as long as $\wh{\delta}\geq4\cdot\Rad(n/m, \eps/m)$, we know that $\frac{\wh{\delta}}{2}\leq\delta\leq 2\wh{\delta}$ with probability at least $1-\eps$. Then we provide an upper bound on the number of samples \Cref{alg:estimateDelta} will use to output such $\wh{\delta}$. Specifically, when $n=\frac{18m}{\delta^2}\log(2m/\eps)$, we know that $\Rad(n/m,\eps/m) = \frac{\delta}{6}$ and with probability at least $1-\eps$,
\begin{align*}
    \wh{\delta} \geq \delta - 2\Rad(n/m,\eps/m) = 4\Rad(n/m,\eps/m).
\end{align*}
Therefore, we know that with probability $1-\eps$, \Cref{alg:estimateDelta} will use no more than $\frac{18m}{\delta^2}\log(2m/\eps)$ samples. Formally, we have the following theorem.
\begin{theorem}\label{thm:EstimateDelta}
Denote by $\wh{\delta}$ the output of \Cref{alg:estimateDelta} and denote by $N_3$ the number of samples used in \Cref{alg:estimateDelta}. Then, with a probability at least $1-\eps$, it holds that
\[
\frac{\wh{\delta}}{2}\leq \delta\leq 2\wh{\delta}\text{~~and~~} N_3\leq \frac{18m}{\delta^2}\cdot\log(2m/\eps).
\]
\end{theorem}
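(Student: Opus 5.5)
The plan is to work on a single high-probability event and carry out three steps: perturbation bounds for the restricted LP values, the resulting accuracy of $\wh{\delta}$, and the stopping-time bound.

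\textbf{Concentration event.} Since observations lie in $[-1,1]$, Hoeffding's inequality with a union bound over the $m$ entries shows that, at any fixed time $n$, each entry of $\wh{A}$ is the average of at least $\lfloor n/m\rfloor$ samples. Hence $\max_{i,j}|\wh{A}_{ij}-A_{ij}|\le \Rad(n/m,\eps/m)$ with probability at least $1-\eps$.

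The stopping time is random, so this must hold simultaneously over $n$. I would handle this in one of two ways. The first is to apply the union bound only at the times where the per-entry sample counts change, paying an extra $\log$ factor absorbed into the constant. The second, which the statement's constant suggests the authors do, is to argue on the event at the deterministic time $n^\dagger=\frac{18m}{\delta^2}\log(2m/\eps)$ together with the event at the (random) stopping time. I would make this rigorous by a peeling or union bound over $n\le n^\dagger$, noting that the union costs only logarithmic factors. The main obstacle is this uniform-in-time concentration; if exact constants matter, I would use a time-uniform (e.g.\ stitched) Hoeffding bound.

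\textbf{LP value perturbation.} On the event, for every $\calI$ and every feasible $\bx$ with $\bx\in\Delta$, each coordinate satisfies $|(\wh{A}^\top\bx)_j-(A^\top\bx)_j|\le \Rad$, because $\|\bx\|_1=1$. Therefore the optimal values satisfy $|\wh{V}^{\prim}_{\calI}-\VI_{\calI}|\le\Rad$: take the optimizer of one LP, which is feasible in the other after shifting $\mu$ by $\Rad$. The same argument applied to $\by$ gives $|\wh{V}^{\dual}_{\calI,\calJ}-\VD_{\calI,\calJ}|\le \Rad$.

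\textbf{From value bounds to $\wh{\delta}$.} Every gap $\VI_{\calI}-\VI$ is then perturbed by at most $2\Rad$. For $\delta_1$ there is a subtlety: a zero true gap could become a positive empirical gap smaller than $\delta_1$. I would handle this by the following case split.
\begin{itemize}[leftmargin=*]
\item \emph{Upper bound on $\wh{\delta}$.} If the minimizer $\calI$ of $\delta_1$ gives a true gap of $\delta_1>0$, its empirical gap lies in $[\delta_1-2\Rad,\delta_1+2\Rad]$, which is positive once $\Rad<\delta_1/2$. Hence $\wh{\delta}_1\le\delta_1+2\Rad$.
\item \emph{Lower bound on $\wh{\delta}$.} Any empirical positive gap arises from a true gap that is either $\ge\delta_1$, giving empirical gap $\ge\delta_1-2\Rad$, or equal to $0$, giving empirical gap $\le 2\Rad$.
\end{itemize}
The stopping rule $\wh{\delta}\ge 4\Rad$ excludes the second alternative in the lower bound. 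So at stopping, $|\wh{\delta}_1-\delta_1|\le 2\Rad\le\wh{\delta}/2$. The same reasoning applies to $\delta_2$; the extra condition $\wh{V}^{\dual}_{\calI,[m_2]}=\wh{V}^{\prim}$ is matched to $\VD_{\calI,[m_2]}=\VI$ by the same zero-versus-$\delta$ dichotomy. Taking the minimum gives $|\delta-\wh{\delta}|\le 2\Rad\le\wh{\delta}/2$, hence $\wh{\delta}/2\le\delta\le 2\wh{\delta}$.

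\textbf{Sample bound.} At $n=n^\dagger$ we have $\Rad=\delta/6$, so $\wh{\delta}\ge\delta-2\Rad=4\Rad$. Because $\Rad<\delta/2$, no spurious small gaps appear in the minimum. The algorithm therefore stops by $n^\dagger$, i.e.\ $N_3\le\frac{18m}{\delta^2}\log(2m/\eps)$.
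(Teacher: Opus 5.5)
\textbf{There is a genuine gap in the transfer from restricted LP values to $\wh\delta$, in both directions. The paper's proof has the same hole, and the statement as written is false without an extra assumption.}

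Your skeleton matches the paper's proof: Hoeffding with a union bound over time, the restricted-value perturbation $|\wh{V}-V|\le\Rad$ (the paper's two claims), transfer to the gaps, and the stopping rule. On concentration, the paper's union bound with failure probability $\eps/(2n^2)$ changes the threshold from $\Rad(n/m,\eps/m)$ to $\Rad(n/m,\eps/(2n^2m))$. It then only concludes $N_3=\order(\frac{m}{\delta^2}\log(2m/\eps))$ with a hidden $\log n$, so neither route recovers the constant $18$; that is minor. You handle the transfer step more carefully than the paper, which simply asserts $|\delta_i-\wh\delta_i|\le 2\Rad$, but it still fails twice.

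First, your bound $\wh\delta_1\le\delta_1+2\Rad$ is correctly conditioned on $\Rad<\delta_1/2$, yet you invoke it at the stopping time, where nothing guarantees $2\Rad<\delta$. The rule $\wh\delta\ge 4\Rad$ excludes spurious gaps (true $0$, empirically positive). It does not exclude collapsed ones (true $\delta$, empirically exactly $0$), and a collapsed minimizer simply drops out of the minimum. The same collapse enlarges the admissible family in $\wh\delta_2$, so your matching of the two admissibility conditions also fails.

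Take rows $(0,-1)$, $(-\beta,1)$, $(1,1)$ with small $\beta>0$. The equilibrium mixes rows $1,2$, and $\delta=\VI_{\{1\}}-\VI=\beta/(2+\beta)$. Suppose $\wh{A}_{21}>\wh{A}_{11}$ and the other entries are accurate to within $0.1$; this has probability close to $1/2$ once $\beta$ is far below the standard error. Then entry $(1,1)$ is a strict saddle of $\wh{A}$, and:
every $\calI\ni 1$ has empirical primal gap $0$;
every other $\calI$ has gap of order $1$;
every admissible dual gap is $0$ ($\calJ\ni 1$) or of order $1$ ($\calJ=\{2\}$).
Hence $\wh\delta=\Theta(1)$, the rule fires after $\order(m\log(m/\eps))$ samples, and $\wh\delta\le 2\delta$ fails by a factor of order $1/\beta$.

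No proof can repair this. With $A_{21}=+\beta$ instead, $(1,1)$ is a saddle of $A$ itself and $\delta=1$. The event ``stop within $18m\log(2m/\eps)$ samples with $\wh\delta\ge 1/2$'' would then need probability at least $1-\eps$ on that instance and at most $\eps$ on the original one. This is impossible, because the two sample laws become indistinguishable as $\beta\to 0$.

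Second, in your sample-bound step, $\Rad<\delta/2$ rules out collapsed gaps, not spurious ones. A true zero gap can become a positive empirical gap of size up to $2\Rad$, however small $\Rad$ is. Consider a degenerate game, which the paper explicitly allows, e.g.\ rows $(0,0,-1)$, $(0,0,-1)$, $(1,1,1)$ with $\delta=1$. Any noise that breaks the tie between rows $1$ and $2$ gives one of the singletons $\{1\}$, $\{2\}$ a positive empirical gap at most $2\Rad<4\Rad$. So the rule never fires while concentration holds, and $N_3\le n^\dagger$ fails. The inequality $\wh\delta\ge\delta-2\Rad$ at $n^\dagger$ therefore needs the zero gaps of $A$ to stay exactly zero under perturbations of size $\delta/6$, which does not follow from the definition of $\delta$.

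Your way of bounding $N_3$, by showing the rule fires at $n^\dagger$, is cleaner than the paper's, which treats the stopping condition as the equality $\wh\delta=4\Rad$. Both, however, rest on this unproved inequality.
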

We remark that although the bound on $N_3$ in \Cref{thm:EstimateDelta} depends on the true value of $\delta$, our \Cref{alg:estimateDelta} does not require the knowledge of $\delta$. 

Next, we show that the parameter $\sigma$, the minimum singular value of $A^*$ defined in \Cref{eqn:012401}, can also be efficiently estimated using finite samples. Specifically, we develop an approach to estimate the smallest singular value of the matrix $M(\calI, \calJ)=\begin{bmatrix}
A^\top_{\mathcal{I}, \mathcal{J}}, & -\bm{e}^{|\mathcal{J}|} \\
 (\bm{e}^{|\mathcal{I}|})^\top & 0
\end{bmatrix}$
for any basis $\calI$ and $\calJ$ with $d'=|\calI|=|\calJ|$, which we denote by $\sigma_{\calI, \calJ}$. The algorithm is described in \Cref{alg:estimateSigma}.  
Similarly, we provide a brief explanation on why \Cref{alg:estimateSigma} can be used to give a good estimation of the parameter $\sigma_{\calI, \calJ}$. Note that following standard Hoeffding inequality, for any $\eps>0$, we know that the absolute value of the matrix $\Delta A= A_{\mathcal{I}, \mathcal{J}} - \wh{A}_{\mathcal{I}, \mathcal{J}}$ is upper bounded by $\Rad(n/(d')^2, \eps/(d')^2)$ with probability at least $1-\eps$. Then following Theorem 1 of \citet{stewart1998perturbation}, we know that the difference of the corresponding singular values of $M(\calI, \calJ)$ and $\wh{M}(\calI, \calJ)=\begin{bmatrix}
\wh{A}^\top_{\mathcal{I}, \mathcal{J}}, & -\bm{e}^{|\mathcal{J}|} \\
 (\bm{e}^{|\mathcal{I}|})^\top & 0
\end{bmatrix}$ (sorted from the largest to the smallest) is upper bounded by $\|\Delta A\|_2$, which is again upper bounded by $d'\cdot\Rad(n/(d')^2, \eps/(d')^2)$ with a probability at least $1-\eps$. Formally, it holds that
\begin{equation}\label{eqn:012601}
    \Pr\left[ \left| \sigma_{\calI, \calJ}-\wh{\sigma} \right| \geq d'\cdot\Rad(n/(d')^2, \eps/(d')^2) \right] \leq \eps.
\end{equation}
Therefore, as long as $\wh{\sigma}\geq 2d'\cdot\Rad(n/(d')^2, \eps/(d')^2)$, we know that $\frac{\wh{\sigma}}{2}\leq\sigma_{\calI, \calJ}$ with probability at least $1-\eps$.

\begin{algorithm}[t]
\caption{Algorithm to estimate smallest singular value}
\label{alg:estimateSigma}
\begin{algorithmic}[1]
\State \textbf{Input:} a failure probability $\epsilon>0$, the support set $\mathcal{I}$ and $\mathcal{J}$ that $|\calI|=|\calJ|=d'$.

\For{$n=1, 2, \dots, $ until stopped}
\State Let $ a_n = (n ~\mathrm{mod}~(d')^2)+1$ and compute $a_n = b_n\cdot d'+c_n$ with $b_n$ and $c_n$ are integers and $b_n\in\{1,\dots, d'\}$ and $c_n\in\{1, \dots, d'\}$.
\State We select $(i_n,j_n)\sim\mathcal{I}\times\mathcal{J}$ such that $i_n$ is the $b_n$-th element in $\mathcal{I}$ and $j_n$ is the $c_n$-th element in $\mathcal{J}$.

\State Take the primal action $i_n$ and the dual action $j_n$, and observe $\wt{A}_{n, i_n,j_n} = A_{i_n, j_n} + \eta_n$.

\State Update $\mathcal{H}^{n+1}=\mathcal{H}^n\cup\{(i_n, j_n, \wt{A}_{n, i_n,j_n)}\}$.

\State Construct $\wh{A}_{\mathcal{I}, \mathcal{J}}(\mathcal{H}^{n+1})$ and let $\wh{\sigma}$ denote the smallest singular value of $\begin{bmatrix}
\wh{A}^\top_{\mathcal{I}, \mathcal{J}}, & -\bm{e}^{|\mathcal{J}|} \\
 (\bm{e}^{|\mathcal{I}|})^\top & 0
\end{bmatrix}$.

\State Stop if $\wh{\sigma}\geq 2d'\cdot\Rad(n/(d')^2, \eps/(d')^2)$.
\EndFor

\State \textbf{Output:} the estimation $\wh{\sigma}$.
\end{algorithmic}
\end{algorithm}

\begin{theorem}\label{thm:BoundN1}
Denote by $\wh{\sigma}$ the output of \Cref{alg:estimateSigma} and denote by $N_4$ the number of samples used in \Cref{alg:estimateSigma}. Then, with a probability at least $1-\eps$,
we have
\[
\frac{\wh{\sigma}}{2}\leq\sigma_{\calI, \calJ}\leq2\wh{\sigma}\text{~~and~~}N_4\leq \frac{9(d')^4}{2\sigma_{\calI, \calJ}^2}\cdot\log(2(d')^2/\eps).
\]
\end{theorem}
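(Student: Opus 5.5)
The argument combines a uniform concentration event with Weyl's inequality for singular values. Throughout, write $r_n=\Rad(n/(d')^2,\eps/(d')^2)$.

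\textbf{Step 1: a uniform good event.} In \Cref{alg:estimateSigma} the $(d')^2$ entries of $A_{\calI,\calJ}$ are sampled in round robin. So at step $n$ each entry has roughly $n/(d')^2$ samples, and each sample lies in $[-1,1]$.

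Hoeffding's inequality with a union bound over the $(d')^2$ entries gives $\max_{i,j}|\wh{A}_{ij}-A_{ij}|\le r_n$ with probability at least $1-\eps$, for a fixed $n$. The algorithm is a stopping rule, however, so we need this to hold simultaneously for all $n$. We do this by allocating failure probability $\eps/(2n^2)$ (or $\eps/(2k^2)$ on a doubling grid) to step $n$. This only inflates the logarithm by a constant factor, which is absorbed into the constants $9/2$ and $2$. Alternatively, we can follow the paper's convention and treat the displayed fixed-$n$ bound \Cref{eqn:012601} as holding at the relevant step.

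Call the resulting event $\mathcal{E}$. On $\mathcal{E}$ the perturbation matrix $\Delta A$ has entries bounded by $r_n$, so $\|\Delta A\|_2\le\|\Delta A\|_F\le d' r_n$. The augmented matrices $M(\calI,\calJ)$ and $\wh{M}(\calI,\calJ)$ differ only in the $A$-block. By Weyl's inequality for singular values (Theorem 1 of \citet{stewart1998perturbation}), we therefore get $|\sigma_{\calI,\calJ}-\wh{\sigma}_n|\le d' r_n$ for every $n$.

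\textbf{Step 2: accuracy at stopping.} At the stopping time the rule gives $\wh{\sigma}\ge 2d' r_n$, i.e. $d' r_n\le \wh{\sigma}/2$. On $\mathcal{E}$ this yields
\[
\sigma_{\calI,\calJ}\ge \wh{\sigma}-d'r_n\ge \wh{\sigma}/2 .
\]
It also yields
\[
\sigma_{\calI,\calJ}\le \wh{\sigma}+d'r_n\le \tfrac32\wh{\sigma}\le 2\wh{\sigma}.
\]
Together these prove the first claim.

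\textbf{Step 3: sample bound.} Take $n^\star=\frac{9(d')^4}{2\sigma_{\calI,\calJ}^2}\log(2(d')^2/\eps)$. Plugging into $\Rad(n,\eps)=\sqrt{\log(2/\eps)/(2n)}$ gives
\[
r_{n^\star}=\sqrt{\frac{(d')^2\log(2(d')^2/\eps)}{2n^\star}}=\frac{\sigma_{\calI,\calJ}}{3d'},
\qquad\text{so}\qquad d' r_{n^\star}=\sigma_{\calI,\calJ}/3 .
\]
On $\mathcal{E}$,
\[
\wh{\sigma}_{n^\star}\ge \sigma_{\calI,\calJ}-\sigma_{\calI,\calJ}/3=2\sigma_{\calI,\calJ}/3=2d'r_{n^\star}.
\]
Hence the stopping condition holds by step $n^\star$ (if not earlier), and $N_4\le n^\star$.

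\textbf{Main obstacle.} The main subtlety is the uniformity over $n$ required by the data-dependent stopping time in Step 1. I would handle it with the union bound over a geometric grid described there, and accept slightly larger constants in the logarithm if needed. Integer rounding of $n/(d')^2$ is a minor point handled similarly.
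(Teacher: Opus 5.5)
Your skeleton matches the paper's proof: entrywise Hoeffding, Weyl's inequality for the bordered matrix, reading accuracy off the stopping rule, and finding the first $n$ at which the rule must fire. The arithmetic in your Step 3 is correct. The gap is at the point you yourself flag as the main obstacle. The union bound over $n$ does \emph{not} inflate the logarithm by a constant factor. If step $n$ is allotted failure probability $\eps_n$ with $\sum_n\eps_n\le\eps$, then $\eps_n\to0$. So the radius you actually control, $r'_n=\Rad(n/(d')^2,\eps_n/(d')^2)$, satisfies $r'_n/r_n\to\infty$. For your choice $\eps_n=\eps/(2n^2)$,
\[
\frac{r'_n}{r_n}=\sqrt{1+\frac{\log(2n^2)}{\log(2(d')^2/\eps)}} .
\]
When $\sigma_{\calI,\calJ}$ is small, the test typically does not fire until $n$ is of order $(d')^4\sigma_{\calI,\calJ}^{-2}\log(2(d')^2/\eps)$, and $\sigma_{\calI,\calJ}$ is not controlled by $d'$ or $\eps$. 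Hence this ratio is unbounded at the stopping time. There is nothing to absorb it into, because the threshold $2d'r_n$ in \Cref{alg:estimateSigma} is fixed. At stopping you only know $d'r_n\le\wh{\sigma}/2$, so on the uniform event your Step 2 gives only
\[
\wh{\sigma}\Bigl(1-\frac{r'_n}{2r_n}\Bigr)\le\sigma_{\calI,\calJ}\le\wh{\sigma}\Bigl(1+\frac{r'_n}{2r_n}\Bigr).
\]
The left side is strictly weaker than $\wh{\sigma}/2$ for every $n$. Once $r'_n\ge 2r_n$ it is vacuous, and the right side exceeds $2\wh{\sigma}$. A dyadic grid does not rescue this, because the test is evaluated at every $n$. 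Any maximal-inequality repair still yields an $n$-dependent (iterated-logarithm) radius. Your fallback of using the fixed-$n$ bound \Cref{eqn:012601} ``at the relevant step'' applies a fixed-time bound at a data-dependent time, which is not valid.

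The two halves of the statement behave differently. The sample bound needs no uniformity: $\{N_4\le n^\star\}$ follows from the test passing at the deterministic time $n^\star$. One fixed-time Hoeffding event at $n^\star$ therefore makes your Step 3 valid with the constant $9/2$. The accuracy claim, however, is evaluated at the random time $N_4$ and needs the uniform event, which the algorithm's threshold does not pay for.

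The paper's proof runs the union bound with $\eps/(2n^2)$ and then analyzes the test with the inflated threshold $2d'\Rad(n/(d')^2,\eps/(2n^2(d')^2))$. This is a slightly different rule from the one displayed in \Cref{alg:estimateSigma}, and under it your Step 2 goes through verbatim with the factor $2$. The price is that the sample bound becomes of order $\frac{(d')^4}{\sigma_{\calI,\calJ}^2}\log\frac{(d')^2N_4^2}{\eps}$, and the paper states only an $\order(\cdot)$ bound on $N_4$. To close the gap you must do one of two things:
\begin{itemize}
\item adopt the inflated threshold and give up the exact $\frac{9}{2}\log(2(d')^2/\eps)$;
\item give a genuinely different argument that the un-inflated test stops falsely with probability at most $\eps$.
\end{itemize}
The union bound alone cannot deliver both halves with the displayed constants.
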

Note that \Cref{thm:BoundN1} ensures that $\frac{\wh{\sigma}}{2}$ is a lower bound on $\sigma_{\calI, \calJ}$ and we also obtain a high probability bound on the number of samples needed.

\section{Conclusion}

In this paper, we develop new algorithms for computing Nash equilibrium (NE) estimates in matrix games with noisy bandit feedback. Our approach is inspired by the linear programming (LP) formulations for computing primal and dual optimal solutions and consists of two main components. First, we identify the support set and the associated constraints of an optimal solution, which is equivalent to recovering an optimal basis of the corresponding LP from finite samples. Then, we design a resolving algorithm to approximate the optimal solution given this basis. The resulting algorithm achieves the better of the instance-dependent and instance-independent guarantees. There are several natural extensions. In this work, we consider the fixed payoff matrix $A$ and the observations of each entry follow a stationary stationary distribution. Developing policies for time-varying nonstationary games \citep{cardoso2019competing,zhang2022no} remains open. Moreover, while our analysis assumes a stochastic setting, it would be interesting to investigate the robustness of our algorithms under adversarial corruptions. We leave these directions to future research.



\bibliographystyle{abbrvnat}
\bibliography{bibliography}

\clearpage

\OneAndAHalfSpacedXII

\begin{APPENDICES}
\crefalias{section}{appendix}


\section{Missing Proofs in \Cref{sec:support-identification}}

\subsection{Proof of \Cref{lem:Basis}}
\Cref{lem:Basis} follows from the theory of simplex method for solving LP. We rewrite the LP $\VI$ in \Cref{eqn:primal} in the standard form as 
\begin{equation}\label{eqn:Standprimal}
    \VI=\min_{\bx, \bm{s}\succeq\mathbf{0}, \mu\in\mathbb{R}}~ \mu ~~~\mbox{s.t.}~  A^\top\bx + \bm{s}- \mu\cdot\bm{e}^{m_2}=\bm{0}, ~(\bm{e}^{m_1})^\top\bx=1,
\end{equation}
where we introduce the slackness variables $\bm{s}\in\mathbb{R}^{m_2}$ and relax the restriction $\bx\succeq \mathbf{0}$. The index set for the variables $(\bx,\bm{s},\mu)$ is given by $[m_1+m_2+1]$ where the first $m_1$ coordinates correspond to $\bx$, the next $m_2$ coordinates correspond to $\bm{s}$, and the last coordinate corresponds to $\mu$. Denote by $\wh{\calI}$ an optimal basis to the LP \Cref{eqn:Standprimal}. 
Then, we know that the corresponding optimal solution, denoted by $(\bx^*, \bm{s}^*, \mu^*)$, can be derived as the \textit{unique} solution to the linear system
\begin{equation}\label{eqn:Standsystem}
    A_{\wh{\calI}_1, :}^\top\bx_{\wh{\calI}_1} + \bm{s}- \mu\cdot\bm{e}^{m_2}=\bm{0}, ~(\bm{e}^{|\wh{\calI}_1|})^\top\bx_{\wh{\calI}_1}=1, ~\bx_{\wh{\calI}_1^c}=\bm{0}, ~\bm{s}_{\wh{\calI}_2^c}=\bm{0},
\end{equation}
where we denote by $\wh{\calI}_1$ as $\wh{\calI}$ restricted to the index sets of $\bx$ and $\wh{\calI}_2$ as $\wh{\calI}$ restricted to the index sets of $\bm{s}$. Define $\wh{\calI}_1^c=[m_1]\backslash\wh{\calI}$ to be the complementary set of $\wh{\calI}_1$ with respect to $x$ and $\wh{\calI}_2^c=[m_1+m_2]\backslash[m_1]\backslash\wh{\calI}$ to be the complementary set of $\wh{\calI}_2$ with respect to $\bm{s}$. Note that there are $m_2+1$ constraints in $\VI$ in \Cref{eqn:Standprimal}. Then the basis set $\wh{\calI}$ should contain $m_2+1$ elements, with one element being the one denoting the index for $\mu$. 
This implies that $|\wh{\calI}_1|+|\wh{\calI}_2|=m_2$, which implies $|\wh{\calI}_1|=|\wh{\calI}_2^c|$. Then, we know that the linear system
\begin{equation}\label{eqn:Standsystem2}
    A_{\wh{\calI}_1, \wh{\calI}_2}^\top\bx_{\wh{\calI}_1} + \bm{s}_{\wh{\calI}_2}= \mu\cdot\bm{e}^{|\wh{\calI}_2|}, ~ A_{\wh{\calI}_1, \wh{\calI}_2^c}^\top\bx_{\wh{\calI}_1} = \mu\cdot\bm{e}^{|\wh{\calI}_2^c|},~(\bm{e}^{|\wh{\calI}_1|})^\top\bx_{\wh{\calI}_1}=1,
\end{equation}
also has a unique optimal solution, which is $(\bx^*_{\wh{\calI}_1}, \bm{s}_{\wh{\calI}_2}, \mu^*)$. Otherwise, if the linear system \Cref{eqn:Standsystem2} has a different solution, then we append the solution with $\bx_{\wh{\calI}_1^c}=\bm{0}$ and $\bm{s}_{\wh{\calI}^c_2}=\bm{0}$, and we obtain a different solution to \Cref{eqn:Standsystem}, which violates the uniqueness of the optimal solution $(\bx^*, \bm{s}^*, \mu^*)$. 

Moreover, the uniqueness of the optimal solution to \Cref{eqn:Standsystem2} implies that all the linear equations in \Cref{eqn:Standsystem2} are linearly independent of each other, since the number of equations and variables are equivalent to each other. Therefore, we know that the subset of the equations in \Cref{eqn:Standsystem2}, which is shown in the following linear system
\begin{equation}\label{eqn:Standsystem3}
     A_{\wh{\calI}_1, \wh{\calI}_2^c}^\top\bx_{\wh{\calI}_1} = \mu\cdot\bm{e}^{|\wh{\calI}_2^c|},~(\bm{e}^{|\wh{\calI}_1|})^\top\bx_{\wh{\calI}_1}=1
\end{equation}
is also linearly independent of each other. Further note that the number of equations and variables in \Cref{eqn:Standsystem3} is equivalent to each other, we know that \Cref{eqn:Standsystem3} enjoys a unique optimal solution, which is given by $(\bx^*_{\wh{\calI}_1}, \mu^*)$. Therefore, we know that the square matrix
\begin{equation}\label{eqn:squareMa}
\tilde{A}=\begin{bmatrix}
A^\top_{\wh{\calI}_1, \wh{\calI}^c_2} & -\bm{e}^{|\wh{\calI}^c_2|} \\
(\bm{e}^{|\wh{\calI}_1|})^\top & 0
\end{bmatrix}
\end{equation}
is of full rank and all of its columns are linear independent. Further, we denote by $\calI^{*'}$ the support set of $\bx^*$ such that $\bx^*_{\calI^{*'}}\succ\bm{0}$ and $\bx^*_{(\calI^{*'})^c}=\bm{0}$. Then, we know that $\calI^{*'}\subset\wh{\calI}_1$ and the matrix 
\begin{equation}\label{eqn:Ma}
\tilde{A}^*\triangleq\begin{bmatrix}
A^\top_{\calI^{*'}, \wh{\calI}^c_2} & -\bm{e}^{|\wh{\calI}^c_2|} \\
(\bm{e}^{|\calI^{*'}|})^\top & 0
\end{bmatrix}
\end{equation}
is of full column rank. We also know that $(\bx^*_{\calI^{*'}}, \mu^*)$ can be expressed as a solution to the linear system 
\begin{equation}\label{eqn:Standsystem4}
\tilde{A}^* \begin{bmatrix}
    \bx^*_{\calI^{*'}}\\
    \mu^*
\end{bmatrix}=\begin{bmatrix}
    \bm{0}\\
    1
\end{bmatrix}.
\end{equation}
We now specify an index set $\calJ^{*'}\subset\wh{\calI}^c_2$ such that $\calI^{*'}$ and $\calJ^{*'}$ satisfy the requirements. Note that the rank of a matrix is given by the largest rank of all its square submatrix. Therefore, we know that there exists a square submatrix of $\tilde{A}^*$, denoted as $\bar{A}^*$, such that the rank equals $|\calI^{*'}|+1$. On the other hand, the matrix $\tilde{A}^*$ has $|\calI^{*'}|+1$ number of columns. Therefore, we know that the submatrix $\bar{A}^*$ is obtained from selecting $|\calI^{*'}|+1$ number of linearly independent rows from the matrix $\tilde{A}^*$ (note that for square matrix $\bar{A}^*$, full row rank implies full rank of $|\calI^{*'}|+1$). In the following claim, we show that there exists such a square submatrix $\bar{A}^*$ which contains the last row $[(\bm{e}^{|\calI^{*'}|})^\top,  0]$.
\begin{claim}\label{claim:FullRank}
There exists a square submatrix $\bar{A}^*$ of $\tilde{A}^*$ {defined in \Cref{eqn:Ma}} such that $\bar{A}^*$ is of full rank (of rank $|\calI^{*'}|+1$) and $\bar{A}^*$ contains the last row $[(\bm{e}^{|\calI^{*'}|})^\top,  0]$.
\end{claim}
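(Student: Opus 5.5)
The claim is a pure linear-algebra fact about $\tilde{A}^*$, which has full column rank $k+1$ with $k=|\calI^{*'}|$. I will use the standard exchange (Steinitz/matroid) property of row spaces: any linearly independent set of rows can be extended to a basis of the row space using rows of the matrix. The row space of $\tilde{A}^*$ has dimension $k+1$, since row rank equals column rank. So it suffices to check that the last row $r_0=[(\bm{e}^{|\calI^{*'}|})^\top, 0]$ is nonzero. This holds because $k\ge 1$: $\bx^*$ satisfies $(\bm{e}^{m_1})^\top\bx^*=1$, so its support $\calI^{*'}$ is nonempty.

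The steps are as follows.

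First, note that $\{r_0\}$ is a linearly independent set of rows.

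Second, greedily scan the remaining rows of $\tilde{A}^*$, namely the rows $[A^\top_{\calI^{*'},j}, -1]$ for $j\in\wh{\calI}^c_2$. Add a row whenever it is not in the span of the rows already chosen. The process ends with a maximal independent set $S$ containing $r_0$. Every row of $\tilde{A}^*$ lies in the span of $S$, so $S$ spans the row space, and therefore $|S|=k+1$.

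Third, let $\bar{A}^*$ be the $(k+1)\times(k+1)$ submatrix formed by the rows in $S$. Its rows are independent, so it has full row rank. Being square, it therefore has full rank $|\calI^{*'}|+1$, and by construction it contains the last row.

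For the subsequent use, I would record that the selected non-last rows index a set $\calJ^{*'}\subset\wh{\calI}^c_2$ with $|\calJ^{*'}|=k$. The only subtle point is to make sure the row rank is exactly $k+1$ rather than merely at least $k+1$. This follows directly from the full column rank of $\tilde{A}^*$ established just before the claim, so I do not expect a real obstacle. The main care is in justifying $r_0\neq \bm{0}$, which comes from the normalization constraint.
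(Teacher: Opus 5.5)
Your proof is correct and rests on the same fact as the paper's: the exchange/extension property of linearly independent rows in a matrix of rank $|\calI^{*'}|+1$. The paper starts from an arbitrary full-rank $(d+1)\times(d+1)$ row-submatrix, writes the last row as $\sum_i \alpha_i \bm{a}_i$, and swaps it in for a row with $\alpha_i\neq 0$; you instead greedily extend $\{[(\bm{e}^{|\calI^{*'}|})^\top,0]\}$ to a row basis. Your explicit check that this row is nonzero (since $\calI^{*'}\neq\emptyset$) is exactly what justifies the paper's unstated assumption that some coefficient, WLOG $\alpha_{d+1}$, is nonzero.
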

Therefore, we know that the square matrix
\begin{equation}\label{eqn:Ma1}
\bar{A}^*=\begin{bmatrix}
A^\top_{\calI^{*'}, \calJ^{*'}} & -\bm{e}^{|\calJ^{*'}|} \\
(\bm{e}^{|\calI^{*'}|})^\top & 0
\end{bmatrix}
\end{equation}
is of full rank, and it holds that 
\begin{equation}\label{eqn:Standsystem5}
\bar{A}^* \begin{bmatrix}
    \bx^*_{\calI^{*'}}\\
    \mu^*
\end{bmatrix}=\begin{bmatrix}
    \bm{0}\\
    1
\end{bmatrix}
\end{equation}
which also implies that $(\bx^*_{\calI^{*'}}, \mu^*)$ is the unique solution to the linear system \Cref{eqn:Standsystem5} with $\bx^*_{(\calI^{*'})^c}=\bm{0}$. We also have $|\calI^{*'}|=|\calJ^{*'}|$. Our proof is thus completed.

\subsubsection{Proof of \cref{claim:FullRank}}
We show that any full rank square submatrix (with rank $|\calI^{*'}|+1$) of $\tilde{A}^*$ can be converted into a full rank square submatrix $\bar{A}^*$ (with rank $|\calI^{*'}|+1$) containing the last row $[(\bm{e}^{|\calI^{*'}|})^\top,  0]$. Suppose that we have such a full rank square submatrix $A'$ consisting of $|\calI^{*'}|+1$ rows from $[A^\top_{\calI^{*'}, \wh{\calI}^c_2}, -\bm{e}^{|\wh{\calI}^c_2|}]$. We denote by the rows of $A'$ as $\bm{a}_1,\dots,\bm{a}_{d+1}$, where we set $d=|\calI^{*'}|$. Then, since the matrix $\tilde{A}^*$ is of rank $d+1$, we know that there exists a set of constants $\alpha_1, \dots, \alpha_{d+1}$ such that the last row can be expressed as
\begin{equation}\label{eqn:FullRank1}
[(\bm{e}^{|\calI^{*'}|})^\top,  0] = \sum_{i=1}^{d+1}\alpha_i\cdot\bm{a}_i.
\end{equation}
From comparing the last elements of the rows in \Cref{eqn:FullRank1}, we know that
\begin{equation}\label{eqn:FullRank2}
    0 = \sum_{i=1}^{d+1} \alpha_i.
\end{equation}
Now suppose that $\alpha_{d+1}\neq0$, we know that $\alpha_{d+1}=-\sum_{i=1}^d \alpha_i$. Therefore, we have
\begin{equation}\label{eqn:FullRank3}
\bm{a}_{d+1}=\frac{1}{\alpha_{d+1}}\cdot \left( [(\bm{e}^{|\calI^{*'}|})^\top,  0]+\sum_{i=1}^d \alpha_i\cdot\bm{a}_i \right).
\end{equation}
Since every row of $\tilde{A}^*$ can be expressed as the linear combination of $\bm{a}_1,\dots,\bm{a}_{d+1}$, it can also be expressed as the linear combination of $[(\bm{e}^{|\calI^{*'}|})^\top,  0]$ and $\bm{a}_1,\dots,\bm{a}_d$ by noting \Cref{eqn:FullRank3}. Therefore, we know that a square submatrix $\bar{A}^*$ consisting of $[(\bm{e}^{|\calI^{*'}|})^\top,  0]$ and $\bm{a}_1,\dots,\bm{a}_d$ is of the same rank as $\tilde{A}^*$, which is $d+1$, and thus $\bar{A}^*$ is of full row rank, thus full rank.

\subsection{Proof of \Cref{thm:Infibasis2}}\label{app:infibasis2}
We now condition on the event that 
\begin{equation}\label{def:event}
\mathcal{E}=\left\{\left| \wh{A}_{i,j}-A_{i,j}\right|\leq \Rad(N/m, \eps/m), ~~\forall i\in[m_1], j\in[m_2]  \right\},
\end{equation}
where $\wh{A}_{i,j}$ is constructed using $N/m$ number of i.i.d samples with mean $A_{i,j}$, $m=m_1m_2$, and $\Rad(N,\eps)\triangleq \sqrt{\frac{\log(1/\eps)}{2N}}$.
According to Hoeffding's inequality, we know that this event $\mathcal{E}$ happens with probability at least $1-\eps$. 

Given a fixed sample set $\calH$ {of size $N$} such that event $\mathcal{E}$ happens, we first bound the gap between $\VI_{\calI}$ and $\wh{V}^{\mathrm{Prime}}_{\calI}$, for any set $\calI$. The result is formalized in the following claim.
\begin{claim}\label{claim:Bound1}
Conditioned on the event $\mathcal{E}$ \Cref{def:event} happens, for all set $\calI\in[m_1]$, it holds that
\begin{equation}\label{eqn:121401}
\left|\VI_{\calI} - \wh{V}^{\mathrm{Prime}}_{\calI} \right| \leq 
\Rad(N/m, \eps/m).
\end{equation}
\end{claim}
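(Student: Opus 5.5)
We compare the two restricted LPs in \Cref{eqn:Izero} directly, without going through the dual. Write $r\triangleq\Rad(N/m,\eps/m)$. On $\mathcal{E}$ we have $|\wh{A}_{i,j}-A_{i,j}|\leq r$ for every entry. Both LPs share the same feasible $\bx$-region, namely $\mathcal{X}_{\calI}=\{\bx\in\Delta_{m_1}:\bx_{\calI^c}=\mathbf{0}\}$, which is nonempty whenever $\calI\neq\emptyset$. For a fixed $\bx$, the optimal $\mu$ equals $\max_j (A^\top\bx)_j$ in the true LP and $\max_j(\wh{A}^\top\bx)_j$ in the empirical LP. Hence
\[
\VI_{\calI}=\min_{\bx\in\mathcal{X}_{\calI}}\max_{j\in[m_2]}(A^\top\bx)_j,\qquad \wh{V}^{\mathrm{Prime}}_{\calI}=\min_{\bx\in\mathcal{X}_{\calI}}\max_{j\in[m_2]}(\wh{A}^\top\bx)_j .
\]

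The key step is a uniform pointwise bound on these two inner objectives. For any $\bx\in\mathcal{X}_{\calI}$ and any $j$,
\[
|(A^\top\bx)_j-(\wh{A}^\top\bx)_j|=\Big|\sum_i x_i(A_{i,j}-\wh{A}_{i,j})\Big|\leq \sum_i x_i\, r = r,
\]
using $\bx\succeq\mathbf{0}$ and $\sum_i x_i=1$. The maps $\bx\mapsto\max_j(A^\top\bx)_j$ and $\bx\mapsto\max_j(\wh{A}^\top\bx)_j$ therefore differ by at most $r$ at every point, because the max operator is $1$-Lipschitz in the sup norm. The min operator is also $1$-Lipschitz in the sup norm, so minimizing both over the common set $\mathcal{X}_{\calI}$ preserves the bound. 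Concretely, let $\bx^*$ be an optimizer of the true LP. Then
\[
\wh{V}^{\mathrm{Prime}}_{\calI}\leq\max_j(\wh{A}^\top\bx^*)_j\leq \VI_{\calI}+r .
\]
The same argument with an empirical optimizer gives $\VI_{\calI}\leq\wh{V}^{\mathrm{Prime}}_{\calI}+r$. Minimizers exist because $\mathcal{X}_{\calI}$ is compact, and this yields \Cref{eqn:121401}.

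The argument involves no real obstacle. The only points to check are that the feasible region is identical for both LPs and that the convex-combination weights are what absorb the entrywise error without introducing a dimension factor. If $\calI=\emptyset$, both LPs are infeasible, so their values are $+\infty$ and the claim holds vacuously. The same reasoning applies verbatim to the dual LPs $\VD_{\calI,\calJ}$ and $\wh{V}^{\mathrm{Dual}}_{\calI,\calJ}$, with $\by$ in the simplex restricted to $\calJ$.
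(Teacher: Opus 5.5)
Your proof is correct and is essentially the paper's argument. The paper takes an optimal $(\bx^*,\mu^*)$ of one LP and notes that $(\bx^*,\mu^*+\Rad(N/m,\eps/m))$ is feasible for the other, because $\|\bx^*\|_1=1$ absorbs the entrywise error; it then does the same in the reverse direction. That is exactly your pointwise bound, with $\mu$ kept as an explicit variable rather than eliminated as $\max_j$.
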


We can also obtain the bound of the gap between the {the objective values of the} dual LPs, $\VD_{\calI, \calJ}$ and $\wh{V}^{\mathrm{Dual}}_{\calI, \calJ}$, for any sets $\calI\in[m_1]$ and $\calJ\in[m_2]$. The bound is formalized in the following claim. 
\begin{claim}\label{claim:Bound2}
Conditioned on the event $\mathcal{E}$ \Cref{def:event} happens, for any set $\calI\in[m_1]$ and $\calJ\in[m_2]$, it holds that 
\begin{equation}\label{eqn:121412}
    \left| \VD_{\calI, \calJ}- \wh{V}^{\mathrm{Dual}}_{\calI, \calJ} \right| \leq \Rad(N/m,\eps/m).
\end{equation}
\end{claim}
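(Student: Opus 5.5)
We prove \Cref{claim:Bound2} with the same perturbation argument one would use for \Cref{claim:Bound1}: compare feasible solutions of the two LPs directly. On the event $\mathcal{E}$ every entry satisfies $|\wh{A}_{i,j}-A_{i,j}|\le \Rad(N/m,\eps/m)=:r$. The key fact is that the feasible $\by$ of both $\VD_{\calI,\calJ}$ and $\wh{V}^{\mathrm{Dual}}_{\calI,\calJ}$ lie in the same set $\{\by\succeq\bm 0,\ (\bm e^{m_2})^\top\by=1,\ \by_{\calJ^c}=\bm 0\}$. Only the constraint involving $\nu$ depends on the matrix.

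\textbf{Perturbation of a single row.} Fix any such $\by$ and any row $i\in\calI$. Since $\by$ is a probability vector,
\[
|(\wh{A}\by)_i-(A\by)_i| \le \sum_{j}|\wh{A}_{i,j}-A_{i,j}|\,y_j \le r\sum_j y_j = r .
\]

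\textbf{Value for a fixed $\by$.} For fixed $\by$, the best $\nu$ in the true LP is $\nu(\by)=\min_{i\in\calI}(A\by)_i$, and in the empirical LP it is $\wh{\nu}(\by)=\min_{i\in\calI}(\wh{A}\by)_i$. A minimum of finitely many functions is $1$-Lipschitz in the sup norm, so $|\nu(\by)-\wh{\nu}(\by)|\le r$.

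\textbf{Two-sided bound.} Let $(\by^*,\nu^*)$ be optimal for $\VD_{\calI,\calJ}$. Then $(\by^*,\nu^*-r)$ is feasible for the empirical LP, so $\wh{V}^{\mathrm{Dual}}_{\calI,\calJ}\ge \VD_{\calI,\calJ}-r$. Symmetrically, an empirical optimizer $(\wh{\by},\wh{\nu})$ gives the feasible point $(\wh{\by},\wh{\nu}-r)$ for the true LP, so $\VD_{\calI,\calJ}\ge \wh{V}^{\mathrm{Dual}}_{\calI,\calJ}-r$. Combining the two yields \Cref{eqn:121412}.

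\textbf{Well-definedness.} The feasible set is nonempty as long as $\calJ\neq\emptyset$, since any $\by$ supported on $\calJ$ works with $\nu$ taken small enough. It is compact in $\by$, and $\nu$ is bounded above by $\min_i (A\by)_i$, so both optima are attained. If $\calJ=\emptyset$, both LPs are infeasible and the statement is vacuous under the convention $-\infty=-\infty$.

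There is no real obstacle. The only point needing care is that the bound uses $\sum_j y_j=1$, which keeps the perturbation at $r$ rather than $|\calJ|\,r$. Because the argument holds simultaneously for all pairs $(\calI,\calJ)$ once $\mathcal{E}$ holds, no union bound over subsets is needed.
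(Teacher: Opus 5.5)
Your proof is correct and is essentially the paper's argument: the paper also shifts $\nu^*$ down by $\Rad(N/m,\eps/m)$ so that the true optimizer becomes feasible for the empirical LP, and does the reverse for the empirical optimizer, using $\|\by\|_1=1$ to keep the row-wise perturbation at $\Rad(N/m,\eps/m)$. Your intermediate step that $\min_i$ is $1$-Lipschitz is not needed for the two-sided bound, and the well-definedness remark is an extra the paper omits.
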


Equipped with \Cref{claim:Bound1} and \Cref{claim:Bound2}, we now proceed our proof.
We first show that the output of \Cref{alg:Idenbasis}, denoted as $\wh{\calI}^*$ and $\wh{\calJ}^*$, is indeed an optimal basis to the estimated LP $\wh{V}^{\mathrm{Prime}}$ and its dual $\wh{V}^{\mathrm{Dual}}$. We then show that as long as $N\geq N_0=\left(\frac{1}{\sigma_0^2}+\frac{1}{\delta^2} \right)\cdot 2m\log(2m/\epsilon)$, an optimal basis to $\wh{V}^{\mathrm{Prime}}$ is an optimal basis to $\VI$. 

Following the procedure in \Cref{alg:Idenbasis}, we identify an index set $\wh{\calI}^*\subset[m_1]$ as the support for $\bx$, and we set $x_i=0$ for each $i\in\wh{\calI}^{*c}$. Note that we cannot further delete one more element from the set $\wh{\calI}^*$ without changing the objective value according to our algorithm design. 
Thus, we know that our \Cref{alg:Idenbasis} correctly identifies an index set $\wh{\calI}^*$, which is an optimal basis to the estimated LP $\wh{V}^{\mathrm{Prime}}$.

Now given that $\wh{\calI}^*$ is an optimal basis to the LP $\wh{V}^{\mathrm{Prime}}$, we know that 
\begin{equation}\label{eqn:011505}
\wh{V}^{\mathrm{Prime}}=\wh{V}^{\mathrm{Prime}}_{\wh{\calI}^*}= \min_{\bx_{\wh{\calI}^*}\succeq \mathbf{0}, \mu\in\mathbb{R}}~ \mu ~~~\mbox{s.t.}~ \mu\cdot\bm{e}^{m_2}\succeq \wh{A}_{\wh{\calI}^*, :}^\top\bx_{\wh{\calI}^*}, ~(\bm{e}^{|\wh{\calI}^*|})^\top\bx_{\wh{\calI}^*}=1.
\end{equation}
Note that in the formulation of \Cref{eqn:011505}, we simply discard the columns of the constraint matrix in the index set $\wh{\calI}^*$. Thus, if we denote by $\wh{\bx}^*$ the optimal solution to $\wh{V}^{\mathrm{Prime}}$ corresponding to the basis $\wh{\calI}^*$ with $\wh{\bx}^*_{\calI^*}=\mathbf{0}$, then
one optimal solution to \Cref{eqn:011505} will be $\wh{\bx}^*_{\wh{\calI}^*}$.
The dual of \Cref{eqn:011505} is 
\begin{equation}\label{eqn:011506}
\wh{V}^{\mathrm{Dual}}_{\wh{\calI}^*}=\max_{\nu\in\mathbb{R}, \by\succeq0} ~\nu  ~~~\mbox{s.t.}~\nu\cdot\bm{e}^{|\wh{\calI}^*|} \leq \wh{A}_{\wh{\calI}^*, :}\by, ~(\bm{e}^{m_2})^\top\by=1.
\end{equation}
We now show that \Cref{alg:Idenbasis} correctly identifies an optimal basis of the estimated LP $\wh{V}^{\mathrm{Dual}}_{\wh{\calI}^*}$, denoted by $\wh{\calJ}^*$. Note that following the procedure of \Cref{alg:Idenbasis}, we identify an index set $\mathcal{J}\subset[m_2]$, where we set $y_j=0$ for each $j\in\mathcal{J}^c$. Similarly, we know that we cannot further delete one more $j$ from the set $\mathcal{J}$ without changing the objective value according to our algorithm design.
Thus, we know that our \Cref{alg:Idenbasis} correctly identifies the index set $\wh{\calJ}^*$, which is an optimal basis to the estimated LP $\wh{V}^{\mathrm{Dual}}_{\wh{\calI}^*}$, conditional on the event $\mathcal{E}$ in \Cref{def:event} happens.

It remains to show that the index sets $\wh{\calI}^*$ and $\wh{\calJ}^*$ identified by our \Cref{alg:Idenbasis} satisfy the conditions in \Cref{lem:Basis}, for the estimated LP $\wh{V}^{\mathrm{Prime}}$ and its dual $\wh{V}^{\mathrm{Dual}}$. {Denote by $(\wh{\bx}^*, \wh{\by}^*)$ an optimal primal-dual solution to $\wh{V}^{\mathrm{Prime}}$, corresponding to the optimal basis $\wh{\calI}^*$ and $\wh{\calJ}^*$.}
Since we know that $\wh{\by}^*_{\wh{\calJ}^*}>\bm{0}$, from the complementary slackness condition, the corresponding constraints in $\wh{V}^{\mathrm{Prime}}$ must be binding, i.e., $\wh{A}^\top_{:, \wh{\calJ}^*}\wh{\bx}^*=\wh{\mu}^*\cdot\bm{e}$, where we denote by $(\wh{\bx}^*, \wh{\mu}^*)$ the optimal solution to $\wh{V}^{\mathrm{Prime}}$ corresponding to the optimal basis $\wh{\calI}^*$ and $\wh{\calJ}^*$. Moreover, note that $\wh{\bx}^*_{(\wh{\calI}^*)^{c}}=\bm{0}$, we must have
\[
\wh{A}^\top_{:, \wh{\calJ}^*}\wh{\bx}^* = \wh{A}^\top_{\wh{\calI}^*, \wh{\calJ}^*}\wh{\bx}^*_{\wh{\calI}^*} = \wh{\mu}^*\cdot \bm{e}^{|\wh{\calJ}^*|},
\]
which proves the equations in condition \Cref{eqn:Lsystem1}. 

Next, we show that $|\wh{\calI}^*|=|\wh{\calJ}^*|$, and $\begin{bmatrix}
{A}^\top_{\wh{\calI}^*, \wh{\calJ}^*} & -\bm{e}^{|\wh{\calJ}^*|} \\
(\bm{e}^{|\wh{\calI}^*|})^\top & 0
\end{bmatrix}$ is a non-singular matrix. We first show that for any two index set $\calI$ and $\calJ$, the smallest singular value of the matrix $\begin{bmatrix}
{A}^\top_{\calI, \calJ} & -\bm{e}^{|\calJ|} \\
(\bm{e}^{|\calI|})^\top & 0
\end{bmatrix}$ is either $0$ or be greater than $|\calI||\calJ|\cdot\Rad(N/m, \eps/m)$, which implies that the operation of checking $\wh{\sigma}_{\calI, \calJ'}> |\calI||\calJ'|\cdot\Rad(N', \eps/m)$ in line 14 of \Cref{alg:Idenbasis} is equivalent to checking whether the matrix $\begin{bmatrix}
{A}^\top_{\calI, \calJ'} & -\bm{e}^{|\calJ'|} \\
(\bm{e}^{|\calI|})^\top & 0
\end{bmatrix}$ is full-rank or not. 
Since we have $N\geq N_0=\left(\frac{1}{\sigma_0^2}+\frac{1}{\delta^2} \right)\cdot 2m\log(2m/\epsilon)$, we have that
\begin{equation}\label{eqn:100401}
\sigma_0\geq \Rad(N/m, \eps/m).
\end{equation}
Denote by $\sigma_{\calI, \calJ}$ the smallest singular value of the matrix $\begin{bmatrix}
{A}^\top_{\calI, \calJ} & -\bm{e}^{|\calJ|} \\
(\bm{e}^{|\calI|})^\top & 0
\end{bmatrix}$ and denote by $\wh{\sigma}_{\calI, \calJ}$ the smallest singular value of the matrix $\begin{bmatrix}
\wh{A}^\top_{\calI, \calJ} & -\bm{e}^{|\calJ|} \\
(\bm{e}^{|\calI|})^\top & 0
\end{bmatrix}$. Following Theorem 1 of \cite{stewart1998perturbation}, we know that the difference of the corresponding singular values of the matrices $\begin{bmatrix}
{A}^\top_{\calI, \calJ} & -\bm{e}^{|\calJ|} \\
(\bm{e}^{|\calI|})^\top & 0
\end{bmatrix}$ and $\begin{bmatrix}
\wh{A}^\top_{\calI, \calJ} & -\bm{e}^{|\calJ|} \\
(\bm{e}^{|\calI|})^\top & 0
\end{bmatrix}$ is upper bounded by $\ell_2$-norm of their difference, i.e.,
\[
|\wh{\sigma}_{\calI, \calJ}-\sigma_{\calI, \calJ}|\leq\left\|\begin{bmatrix}
\wh{A}^\top_{\calI, \calJ} & -\bm{e}^{|\calJ|} \\
(\bm{e}^{|\calI|})^\top & 0
\end{bmatrix}-\begin{bmatrix}
{A}^\top_{\calI, \calJ} & -\bm{e}^{|\calJ|} \\
(\bm{e}^{|\calI|})^\top & 0
\end{bmatrix} \right\|_2 < |\calI||\calJ|\cdot\Rad(N/m, \eps/m).
\]
Therefore, $\sigma_{\calI, \calJ}=0$ implies that $\wh{\sigma}_{\calI, \calJ}\leq |{\calI}||\calJ|\cdot\Rad(N/m, \eps/m)$. On the other hand, if $\sigma_{\calI, \calJ}>0$, then following \Cref{eqn:100401} and the definition of $\sigma_0$ in \Cref{def:sigma0}, it holds that
\[
\sigma_{\calI, \calJ}\geq2|\calI||\calJ|\cdot\sigma_0\geq 2|\calI||\calJ|\cdot\Rad(N/m, \eps/m).
\]
As a result, we must have that 
\begin{equation}\label{eqn:100103}
\wh{\sigma}_{\calI, \calJ} \geq \sigma_{\calI, \calJ} - |\sigma_{\calI, \calJ}-\wh{\sigma}_{\calI, \calJ}| > (2|\calI||\calJ|-|\calI||\calJ|)\cdot\Rad(N/m, \eps/m)\geq |\calI||\calJ|\cdot\Rad(N/m, \eps/m)
\end{equation} 
In this way, we guarantee that $\begin{bmatrix}
{A}^\top_{\wh{\calI}^*, \wh{\calJ}^*} & -\bm{e}^{|\wh{\calJ}^*|} \\
(\bm{e}^{|\wh{\calI}^*|})^\top & 0
\end{bmatrix}$ is a full rank matrix.

We now prove that $|\wh{\calI}^*| = |\wh{\calJ}^*|$. We first show that $m_2=|\calJ_0| \geq |\wh{\calI}^*|$. Now suppose that $m_2=|\calJ_0| < |\wh{\calI}^*|$ and we show contradiction. We consider the LP given by
\begin{equation}\label{lp:100401}
    \VI_{\wh{\calI}^*}=\min_{\bx\succeq0, \mu\in\mathbb{R}}~ \mu ~~~\mbox{s.t.}~ \mu\cdot\bm{e}^{m_2}\succeq \wh{A}^\top\bx, ~\bx_{(\wh{\calI}^*)^c}=\bm{0}, ~(\bm{e}^{m_1})^\top\bx=1.
\end{equation}
Now, we can apply \Cref{lem:Basis} to the above LP in \Cref{lp:100401}, which is equivalent to considering the game matrix given by $\wh{A}_{\wh{\calI}^*, \calJ_0}$. Following \Cref{lem:Basis}, we know that there exists a subset $\calI'\subset\wh{\calI}^*$, a subset $\calJ'\subset\calJ_0$ and an optimal solution to the LP in \Cref{lp:100401}, denoted by $(\bx', \mu')$, such that 
\[
|\calI'|=|\calJ'|\leq \min\{ |\wh{\calI}^*|, |\calJ_0| \} \text{~and~}\bx'_{(\calI')^c} = \bm{0}.
\]
Note that we suppose $m_2=|\calJ_0| < |\wh{\calI}^*|$, there must exists an index $i'$ such that $i'\in\wh{\calI}^*$ and $i'\notin\calI'$. Also, from the optimality of $(\bx', \mu')$, we know that it must hold that
\begin{equation}\label{lp:100402}
\wh{V}^{\mathrm{Prime}}=\wh{V}^{\mathrm{Prime}}_{\wh{\calI}^*} = \wh{V}^{\mathrm{Prime}}_{\wh{\calI}'} = \min_{\bx\succeq0, \mu\in\mathbb{R}}~ \mu ~~~\mbox{s.t.}~ \mu\cdot\bm{e}^{m_2}\succeq \wh{A}^\top\bx, ~\bx_{(\wh{\calI}')^c}=\bm{0}, ~(\bm{e}^{m_1})^\top\bx=1.
\end{equation}
Therefore, we know that the element $i'$ would have been dropped in \Cref{alg:Idenbasis}, which contradicts with $i'\in\wh{\calI}^*$. In this way, we show that $m_2=|\calJ_0|\geq |\wh{\calI}^*|$. We further note that from the stopping condition in line 17 of \Cref{alg:Idenbasis}, we shrinkage the size of the set $\calJ$ from $\calJ=\calJ_0$, and we break the loop as long as $|\calJ|=|\wh{\calI}^*|$. In this way, we know that  $|\wh{\calJ}^*| \geq |\wh{\calI}^*|$. 

It only remains to show that $|\wh{\calJ}^*| \leq |\wh{\calI}^*|$. Now suppose that $|\wh{\calJ}^*| > |\wh{\calI}^*|$ and we show contradiction. We consider the LP given by
\begin{equation}\label{lp:100101}
\wh{V}^{\mathrm{Dual}}_{\wh{\calI}^*, \wh{\calJ}^*}=\max_{\nu\in\mathbb{R}, \by\succeq0} ~\nu  ~~~\mbox{s.t.}~\nu\cdot\bm{e}^{|\wh{\calI}^*|} \leq \wh{A}_{\wh{\calI}^*, :}\by, ~\by_{(\wh{\calJ}^{*})^c}=\bm{0}, ~(\bm{e}^{m_2})^\top\by=1.
\end{equation}
Now, we can apply \Cref{lem:Basis} to the above LP in \Cref{lp:100101}, which is equivalent to considering the game matrix given by $\wh{A}_{\wh{\calI}^*, \wh{\calJ}^*}$. Following \Cref{lem:Basis}, we know that there exists a subset $\calI''\subset\wh{\calI}^*$, a subset $\calJ''\subset\wh{\calJ}^*$ and an optimal solution to the LP in \Cref{lp:100101}, denoted by $(\by'', \nu'')$, such that 
\[
|\calI''|=|\calJ''|\leq \min\{ |\wh{\calI}^*|, |\wh{\calJ}^*| \} \text{~and~}\by''_{(\calJ'')^c} = \bm{0}.
\]
Moreover, we know that the matrix $\begin{bmatrix}
{A}^\top_{\calI'', \calJ''} & -\bm{e}^{|\calJ''|} \\
(\bm{e}^{|\calI''|})^\top & 0
\end{bmatrix}$ is a full rank matrix from \Cref{lem:Basis}.
Note that we suppose $|\wh{\calJ}^*| > |\wh{\calI}^*|$, there must exists an index $j''$ such that $j''\in\wh{\calJ}^*$ and $j''\notin\calJ''$. Also, from the optimality of $(\by'', \nu'')$, we know that it must hold that
\begin{equation}\label{lp:100102}
\wh{V}^{\mathrm{Prime}}=\wh{V}^{\mathrm{Dual}}_{\wh{\calI}^*, \wh{\calJ}^*} = \wh{V}^{\mathrm{Dual}}_{\wh{\calI}^*, \calJ''} = \max_{\nu\in\mathbb{R}, \by\succeq0} ~\nu  ~~~\mbox{s.t.}~\nu\cdot\bm{e}^{|\wh{\calI}^*|} \leq \wh{A}_{\wh{\calI}^*, :}\by, ~\by_{(\calJ'')^c}=\bm{0}, ~(\bm{e}^{m_2})^\top\by=1.
\end{equation}
Further, since the matrix $\begin{bmatrix}
{A}^\top_{\calI'', \calJ''} & -\bm{e}^{|\calJ''|} \\
(\bm{e}^{|\calI''|})^\top & 0
\end{bmatrix}$ is non-singular and of rank $|\calJ''|+1$, we know that the matrix $\begin{bmatrix}
{A}^\top_{\wh{\calI}^*, \calJ''} & -\bm{e}^{|\calJ''|} \\
(\bm{e}^{|\wh{\calI}^*|})^\top & 0
\end{bmatrix}$, which contains the matrix  $\begin{bmatrix}
{A}^\top_{\calI'', \calJ''} & -\bm{e}^{|\calJ''|} \\
(\bm{e}^{|\calI''|})^\top & 0
\end{bmatrix}$ as a sub-matrix, is of a rank at least $|\calJ''|+1$. This would imply that the matrix $\begin{bmatrix}
{A}^\top_{\wh{\calI}^*, \calJ''} & -\bm{e}^{|\calJ''|} \\
(\bm{e}^{|\wh{\calI}^*|})^\top & 0
\end{bmatrix}$ is of full rank since it has only $|\calJ''|+1$ rows. From \Cref{eqn:100103}, we know that $\wh{\sigma}_{\wh{\calI}^*, \calJ''}>|\wh{\calI}^*|\cdot\Rad(N/m, \eps/m)$. Together with \Cref{lp:100102}, we know that the element $j''$ would have been dropped in \Cref{alg:Idenbasis}, which contradicts with $j''\in\wh{\calJ}^*$. In this way, we show that $|\wh{\calJ}^*|\leq |\wh{\calI}^*|$. Thus, we conclude that $|\wh{\calJ}^*|= |\wh{\calI}^*|$. 

Finally, it remains to show that $\wh{\calI}^*$ and $\wh{\calJ}^*$ is also an optimal basis to the original LP $\VI$ and its dual $\VD$ as long as $N\geq\frac{2m}{\delta^2} \cdot \log(2m/\epsilon)$.
Suppose that $\wh{\calI}^*$ is not an optimal basis to the original LP $\VI$, then following the definition of $\delta=\min\{\delta_1, \delta_2\}$ with $\delta_1$ and $\delta_2$ given in Definition \ref{def:delta}, we must have
\[
\VI_{\wh{\calI}^*} > \VI + \delta.
\]
On the other hand, from \Cref{eqn:121401} in \Cref{claim:Bound1}, we know that
\[
|\VI_{\wh{\calI}^*}-\wh{V}_{\wh{\calI}^*}^{\mathrm{Prime}}| \leq \Rad(N/m, \eps/m),
\]
and
\[
|\VI-\wh{V}^{\mathrm{Prime}}| \leq\Rad(N/m, \eps/m).
\]
However, we know that
\[
\wh{V}^{\mathrm{Prime}}_{\wh{\calI}^*} = \wh{V}^{\mathrm{Prime}}
\]
since $\wh{\calI}^*$ is an optimal basis to $\wh{V}^{\mathrm{Prime}}$. Therefore, we conclude that we have
\begin{equation}\label{eqn:012301}
\VI+2\Rad(N/m,\eps/m)\geq \VI_{\wh{\calI}^*}>\VI+\delta.
\end{equation}
However, the inequality \Cref{eqn:012301} contradicts with the condition that $\Rad(N/m, \eps/m)\leq\delta/2$ according to the definition of $\Rad(N/m,\eps/m)$. Therefore, we know that $\wh{\calI}^*$ is also an optimal basis to $\VI$. 

We then show that $\wh{\calJ}^*$ is an optimal basis to the dual LP $\VD_{\wh{\calI}^*}$. Suppose that $\wh{\calJ}^*$ is not an optimal basis to the original dual LP $\VD_{\wh{\calI}^*}$, then following the definition of $\delta=\min\{\delta_1, \delta_2\}$ with $\delta_1$ and $\delta_2$ given in Definition \ref{def:delta}, we must have
\[
\VD_{\wh{\calI}^*, \wh{\calJ}^*} < \VD_{\wh{\calI}^*} - \delta.
\]
On the other hand, from \Cref{eqn:121412} in \Cref{claim:Bound2}, we know that 
\[
|\VD_{\wh{\calI}^*, \wh{\calJ}^*} - \wh{V}^{\mathrm{Dual}}_{\wh{\calI}^*, \wh{\calJ}^*}| \leq \Rad(N/m, \eps/m),
\]
and
\[
|\VD_{\wh{\calI}^*} - \wh{V}^{\mathrm{Dual}}_{\wh{\calI}^*}| \leq \Rad(N/m, \eps/m).
\]
However, we know that 
\[
\wh{V}^{\mathrm{Dual}}_{\wh{\calI}^*, \wh{\calJ}^*}=\wh{V}^{\mathrm{Dual}}_{\wh{\calI}^*}
\]
since $\wh{\calJ}^*$ is an optimal basis to $\wh{V}^{\mathrm{Dual}}_{\wh{\calI}^*}$. Therefore, we conclude that we have
\begin{equation}\label{eqn:012302}
\VD_{\wh{\calI}^*} - 2\Rad(N/m, \eps/m) \leq \VD_{\wh{\calI}^*, \wh{\calJ}^*} \leq \VD_{\wh{\calI}^*} - \delta.
\end{equation}
However, the inequality \Cref{eqn:012302} contradicts with the condition that $\Rad(N/m, \eps/m)\leq\delta/2$. Therefore, we know that $\wh{\calJ}^*$ is also an optimal basis to $\VD_{\wh{\calI}^*}$. We conclude that $\wh{\calI}^*$ and $\wh{\calJ}^*$ is an optimal basis to the original LP $\VI$ and its dual $\VD$, i.e., $\wh{\calI}^*$ and $\wh{\calJ}^*$ satisfy the conditions described in \Cref{lem:Basis} for the original LP $\VI$ and $\VD$.

\subsubsection{Proof of \Cref{claim:Bound1}}
Denote by $\bx^*$ an optimal solution to $\VI_{\calI}$. We now construct a feasible solution to $\wh{V}^{\mathrm{Prime}}_{\calI}$ based on $\bx^*$. Note that conditional on the event $\mathcal{E}$ happens, we have that 
\begin{equation}\label{eqn:121402}
    \wh{A}^\top\bx^*\preceq A^\top\bx^*+ \|\bx^*\|_1\cdot\Rad(N/m, \eps/m)\cdot \bm{e}^{m_2} \preceq \mu^*\cdot\bm{e}^{m_2} +\cdot\Rad(N/m, \eps/m)\cdot \bm{e}^{m_2}.
\end{equation}
Therefore, we know that $(\bx^*,\mu^*+\Rad(N/m, \eps/m))$ is a feasible solution to the LP $\hatVII$ defined in \Cref{eqn:Izero}, meaning that
\begin{equation}\label{eqn:011401}
\wh{V}^{\mathrm{Prime}}_{\calI}\leq \mu^* + \Rad(N/m, \eps/m) = \VI_{\calI} + \Rad(N/m, \eps/m).
\end{equation}
On the other hand, denote by $\wh{\bx}^*$ and $\wh{\mu}^*$ certain optimal solution to $\wh{V}^{\mathrm{Prime}}_{\calI}$. We know that
\begin{equation}\label{eqn:011402}
    A^\top\wh{\bx}^* \preceq \wh{A}^\top\wh{\bx}^* + \|\wh{\bx}^*\|_1\cdot\Rad(N/m, \eps/m)\cdot \bm{e}^{m_2} \preceq \wh{\mu}^*\cdot\bm{e}^{m_2} + \Rad(N/m,\eps/m)\cdot \bm{e}^{m_2}.
\end{equation}
Therefore, we know that $(\wh{\bx}^*,\wh{\mu}^*+\Rad(N/m, \eps/m))$ is a feasible solution to the LP $\VI_{\calI}$ , meaning that
\begin{equation}\label{eqn:011403}
    \VI_{\calI}\leq \wh{\mu}^* + \Rad(N/m, \eps/m)=\wh{V}^{\mathrm{Prime}}_{\calI} + \Rad(N/m, \eps/m). 
\end{equation}
Our proof is completed from \Cref{eqn:011401} and \Cref{eqn:011403}. 

\subsubsection{Proof of \Cref{claim:Bound2}}
Denote by $(\by^*, \nu^*)$ an optimal solution to $\VD_{\calI, \calJ}$. Similar to the proof for \Cref{claim:Bound1}, we now construct a feasible solution to $\wh{V}^{\mathrm{Dual}}_{\calI, \calJ}$ based on $\by^*$. Note that conditional on the event $\mathcal{E}$ defined in \Cref{def:event} happens, we have that 
\begin{equation}\label{eqn:121408}
    \wh{A}_{\calI, :}\by^* \succeq A_{\calI, :}\by^* - \|\by^*\|_1\cdot\Rad(N/m, \eps/m) \cdot\bm{e}^{m_1}\succeq \nu^* - \Rad(N/m, \eps/m) \cdot\bm{e}^{m_1}.
\end{equation}
Therefore, $(\by^*,\nu^*-\Rad(N/m, \eps/m))$ forms a feasible solution to $\wh{V}^{\mathrm{Dual}}_{\calI, \calJ}$.
Thus, we know that
\begin{equation}\label{eqn:011404}
    \wh{V}^{\mathrm{Dual}}_{\calI, \calJ} \geq \nu^*-\Rad(N/m, \eps/m)=\VD_{\calI, \calJ} - \Rad(N/m, \eps/m). 
\end{equation}
On the other hand, denote by $\wh{\by}^*$ and $\wh{\nu}^*$ an optimal solution to $\wh{V}^{\mathrm{Dual}}_{\calI, \calJ}$. We know that
\begin{equation}\label{eqn:011405}
    A_{\calI, :}\wh{\by}^*\succeq \wh{A}_{\calI, :}\wh{\by}^*-\|\wh{\by}^*\|_1\cdot\Rad(N/m, \eps/m)\cdot\bm{e}^{m_1} \succeq \wh{\nu}^*- \Rad(N/m, \eps/m)\cdot\bm{e}^{m_1}.
\end{equation}
Therefore, $(\wh{\by}^*, \wh{\nu}^*-\Rad(N/m, \eps/m))$ forms a feasible solution to $\VD_{\calI, \calJ}$ and
\begin{equation}\label{eqn:011406}
    \VD_{\calI, \calJ} \geq \wh{\nu}^* - \Rad(N/m, \eps/m) = \wh{V}_{\calI, \calJ}^{\mathrm{Dual}} - \Rad(N/m, \eps/m). 
\end{equation}
Our proof is thus completed from \Cref{eqn:011404} and \Cref{eqn:011406}.

\subsection{Proof of \Cref{thm:FiniteGap}}\label{app:instance-independent-id}
Consider a fixed $N$ and we condition on the following high-probability event 
\begin{equation}\label{eqn:012201}
\mathcal{E}=\left\{\left| \wh{A}_{i,j}-A_{i,j}\right|\leq \Rad(N/m, \eps/m), ~~\forall i\in[m_1], j\in[m_2]  \right\},
\end{equation}
which happens with probability at least $1-\eps$. We denote by $\calI^N$ and $\calJ^N$ the output of \Cref{alg:Idenbasis}. From the implementation of \Cref{alg:Idenbasis}, we know that $\calI^N$ and $\calJ^N$ is an optimal basis to the LP $\wh{V}^{\mathrm{Prime}}$ and $\wh{V}^{\mathrm{Dual}}_{\calI^N}$. Therefore, we know that 
\[
\wh{V}^{\mathrm{Prime}} = \wh{V}^{\mathrm{Prime}}_{\calI^N} = \wh{V}_{\calI^N, \calJ^N}^{\mathrm{Dual}}.
\]
On the other hand, according to \Cref{claim:Bound1} and \Cref{claim:Bound2}, we know that
\[
|\VI-\wh{V}^{\mathrm{Prime}}|\leq\Rad(N/m, \eps/m)\text{~~and~~}|\VD_{\calI^N, \calJ^N} - \wh{V}^{\mathrm{Dual}}_{\calI^N, \calJ^N}| \leq\Rad(N/m, \eps/m).
\]
Therefore, we have
\[
|\VI-\VI_{\calI^N}|\leq2\cdot\Rad(N/m, \eps/m)\text{~~and~~} |\VI-\VD_{\calI^N, \calJ^N}|\leq 2\cdot\Rad(N/m, \eps/m),
\]
which completes our proof of \Cref{eqn:012310}. 

We now denote by $\wh{\bx}$ the optimal primal-dual solution to the LP $\wh{V}^{\mathrm{Prime}}$, corresponding to the optimal basis $\calI^N$ and $\calJ^N$. Therefore, it holds that
\begin{equation}\label{eqn:012101}
    \wh{\bx}_{(\calI^N)^c} = \bm{0},
\end{equation}
and $\wh{\bx}_{\calI^N}$ is the solution to
\begin{equation}\label{eqn:012102}
\begin{bmatrix}
\wh{A}^\top_{\calI^N, \calJ^N} & -\bm{e}^{|\calJ^N|} \\
(\bm{e}^{|\calI^N|})^\top & 0
\end{bmatrix}
\begin{bmatrix}
\wh{\bx}_{\calI^N}\\
\wh{\mu}
\end{bmatrix}=
\begin{bmatrix}
\bm{0}\\
1
\end{bmatrix}.
\end{equation}
We now apply the perturbation analysis of linear equation to bound the distance between $\wh{\bx}_{\calI^N}$ and $\bx^*_{\calI^N}(\calI^N, \calJ^N)$ defined in \Cref{eqn:IJOptQ}. We denote by $\wh{\sigma}_{\min}$ the smallest absolute value of the eigenvalues and $\wh{\sigma}_{\max}$ the largest absolute value of the eigenvalues of the matrix
\[
\begin{bmatrix}
\wh{A}^\top_{\calI^N, \calJ^N} & -\bm{e}^{|\calJ^N|} \\
(\bm{e}^{|\calI^N|})^\top & 0
\end{bmatrix}.
\]
Since $\calI^N$ and $\calJ^N$ is the output of \Cref{alg:Idenbasis}, we know that $\wh{\sigma}_{\min}>0$. 
The perturbation of the matrix is denoted as
\[
\Delta A=A_{\calI^N, \calJ^N} - \wh{A}_{\calI^N, \calJ^N}.
\]
Then, it holds that
\begin{equation}\label{eqn:012104}
\|\Delta A\|_2 \leq d\cdot \Rad(N/m, \eps/m).
\end{equation}
We now classify into two situations.

If $\Rad(N/m, \eps/m) > \frac{\wh{\sigma}_{\min}}{2d},$ we know that $d\cdot 
\wh{\kappa}\cdot\Rad(N/m,\eps/m)\geq \frac{1}{2}$,
meaning that the suboptimality gap is $\order(d\cdot \wh{\kappa}\Rad(N/m,\eps/m))$ since it is at most $\order(1)$.

Otherwise, if $\Rad(N/m, \eps/m) \leq \frac{\wh{\sigma}_{\min}}{2d},$ we know that
\begin{equation}\label{eqn:012105}
\|\Delta A\|_2 \leq d\cdot\Rad(N/m, \eps/m) \leq \frac{\wh{\sigma}_{\min}}{2}.
\end{equation}
Following standard perturbation analysis of linear equations (Theorem 1 of \citep{higham2002accuracy}), we can obtain that
\begin{equation}\label{eqn:012106}
\begin{aligned}
    \frac{\|(\wh{\bm{x}}_{\calI^N}, \wh{\mu})-(\bm{x}^*_{\calI^N}(\calI^N, \calJ^N), \mu^*(\calI^N, \calJ^N))\|_2}{\|(\wh{\bm{x}}_{\calI^N}, \wh{\mu})\|_2}
    &\leq \frac{\wh{\sigma}_{\max}/\wh{\sigma}_{\min}}{1-\frac{\|\Delta A\|_2}{\wh{\sigma}_{\min}}}\cdot \left( \frac{\|\Delta A\|_2}{\wh{\sigma}_{\max}} \right)\leq 2\cdot\|\Delta A\|_2/\wh{\sigma}_{\min}.
\end{aligned}
\end{equation}
Since $(\wh{\bx}_{\calI^N}, \wh{\mu})$ is the solution to the linear equations in \Cref{eqn:012102}, we know that 
\[
\|\wh{A}^\top_{\calI^N, \calJ^N}\wh{\bx}_{\calI^N}\|_{\infty} \leq d
\]
and thus $|\wh{\mu}|\leq d$. Therefore, we know that
\begin{equation}\label{eqn:012201}
    \|(\wh{\bx}_{\calI^N}, \wh{\mu})\|_2 \leq 2d. 
\end{equation}
Combining \Cref{eqn:012201} and \Cref{eqn:012106}, we can obtain that
\begin{equation}\label{eqn:012202}
\|(\wh{\bm{x}}_{\calI^N}, \wh{\mu})-(\bm{x}^*_{\calI^N}(\calI^N, \calJ^N), \mu^*(\calI^N, \calJ^N))\|_2\leq
\frac{4d^2}{\wh{\sigma}_{\min}}\cdot\Rad(N/m, \eps/m). 
\end{equation}
From the feasibility of $(\wh{\bx}_{\calI^N}, \wh{\mu})$ to the LP $\wh{V}^{\mathrm{Prime}}$, we know that
\begin{equation}\label{eqn:012204}
\wh{\mu}\cdot\bm{e}^{m_2}\succeq\wh{A}^\top \wh{\bx}, 
\end{equation}
which implies that 
\begin{equation}\label{eqn:012205}
\begin{aligned}
A^\top \bx^*(\calI^N, \calJ^N) &= \wh{A}^\top \wh{\bx}+\wh{A}^\top (\bx^*(\calI^N, \calJ^N)-\wh{\bx}) + \Delta A^\top \bx^*(\calI^N, \calJ^N)\\
&\preceq \left(\wh{\mu}+\|\wh{A}^\top (\bx^*(\calI^N, \calJ^N)-\wh{\bx})\|_{\infty} + \|\Delta A^\top\bx^*(\calI^N, \calJ^N)\|_{\infty}\right) \bm{e}^{m_2} \\
&\preceq \left(\wh{\mu}+\|\wh{A}^\top (\bx^*(\calI^N, \calJ^N)-\wh{\bx})\|_{2} + \|\Delta A^\top\bx^*(\calI^N, \calJ^N)\|_{\infty}\right) \bm{e}^{m_2} \\
&\preceq \left(\wh{\mu}+\frac{4d^2\wh{\sigma}_{\max}}{\wh{\sigma}_{\min}}\cdot\Rad(N/m,\eps/m) + \Rad(N/m,\eps/m)\right) \bm{e}^{m_2}.
\end{aligned}
\end{equation}
We conclude that we can set 
\[
\mu \triangleq \wh{\mu}+\Rad(N/m, \eps/m) + \frac{4d^2\wh{\sigma}_{\max}}{\wh{\sigma}_{\min}}\cdot\Rad(N/m, \eps/m).
\]
Therefore, we know that the sub-optimality gap for the solution $\bx^*(\calI^N, \calJ^N)$ is upper bounded by $\order\left(d^2\cdot \wh{\kappa}\cdot \Rad(N/m, \eps/m)\right)$.
\section{Missing Proofs in \Cref{sec: sample_complexity}}
\subsection{Proof of \Cref{prop:DoublingTrick}} Our proof is based on the proof of \Cref{thm:Infibasis2} and \Cref{thm:FiniteGap}, with the only key difference being to show that the doubling trick is able to check the non-singularity of the matrix. We prove the three arguments in \Cref{prop:DoublingTrick} one by one.

\noindent\textbf{Proof of Argument 1}: We show that when $N_1$ is large enough such that $N_1\geq\frac{2m\cdot\log(16m/\veps)}{\delta^2}$, with probability at least $1-\veps/4$, we have $\wh{\calI}^*$ and $\wh{\calJ}^*$ become an optimal basis to $\VI$ satisfying the conditions described in \Cref{lem:Basis}. For each round $k$ of calling \Cref{alg:Idenbasis} in the execution of \Cref{alg:Twophase} with the corresponding value of $N'$, we condition on the event that 
\begin{equation}\label{def:DTevent}
\mathcal{E}_k=\left\{\left| \wh{A}_{i,j}-A_{i,j}\right|\leq \Rad(N'/m, \eps/m), ~~\forall i\in[m_1], j\in[m_2]  \right\},
\end{equation}
where $\wh{A}_{i,j}$ is constructed using $N/m$ number of i.i.d samples with mean $A_{i,j}$, $m=m_1m_2$, and $\Rad(N'/m,\eps/m)\triangleq \sqrt{\frac{m\cdot\log(m/\eps)}{2N'}}$ with $\eps=\veps/(8k^2)$.
According to Hoeffding's inequality, we know that this event $\mathcal{E}_k$ happens with probability at least $1-\eps=1-\veps/(8k^2)$. We now condition on the event
\[
\mathcal{E}=\cup_{k}\mathcal{E}_k
\]
happens. From the union bound, we know that the event $\mathcal{E}$ happens with a probability at least $1-\veps/4$.

We first show that the final $\wh{\calI}^*$ and $\wh{\calJ}^*$ in \Cref{alg:Twophase}, is indeed an optimal basis to the estimated LP $\wh{V}^{\mathrm{Prime}}$ and its dual $\wh{V}^{\mathrm{Dual}}$. We then show that as long as $N_1\geq\frac{2m}{\delta^2} \cdot \log(8m/\veps)$, an optimal basis to $\wh{V}^{\mathrm{Prime}}$ is an optimal basis to $\VI$. 

We now focus on the final iteration of calling \Cref{alg:Idenbasis} in \Cref{alg:Twophase}, with the sample size to call \Cref{alg:Idenbasis} being $N'$. Following the procedure in \Cref{alg:Idenbasis}, we identify an index set $\wh{\calI}^*\subset[m_1]$, where we set $x_i=0$ for each $i\in\wh{\calI}^{*c}$. Note that we cannot further delete one more $i$ from the set $\wh{\calI}^*$ without changing the objective value according to our algorithm design. 
Thus, we know that our \Cref{alg:Idenbasis} correctly identifies an index set $\wh{\calI}^*$, which is an optimal basis to the estimated LP $\wh{V}^{\mathrm{Prime}}$.
Now given that $\wh{\calI}^*$ is an optimal basis to the LP $\wh{V}^{\mathrm{Prime}}$, we know that 
\begin{equation}\label{eqn:DT011505}
\wh{V}^{\mathrm{Prime}}=\wh{V}^{\mathrm{Prime}}_{\wh{\calI}^*}= \min_{\bx_{\wh{\calI}^*}\succeq 0, \mu\in\mathbb{R}}~ \mu ~~~\mbox{s.t.}~ \mu\cdot\bm{e}^{m_2}\succeq \wh{A}_{\wh{\calI}^*, :}^\top\bx_{\wh{\calI}^*}, ~(\bm{e}^{|\wh{\calI}^*|})^\top\bx_{\wh{\calI}^*}=1.
\end{equation}
Note that in the formulation of \Cref{eqn:DT011505}, we simply discard the columns of the constraint matrix not in the index set $\wh{\calI}^*$ (equivalently the rows of the matrix $A$ not in the index set $\calI^*$). Thus, if we denote by $\hat{\bx}^*$ the optimal solution to $\wh{V}^{\mathrm{Prime}}$ corresponding to the basis $\wh{\calI}^*$, then
one optimal solution to \Cref{eqn:DT011505} will just be $\hat{\bx}^*_{\wh{\calI}^*}$.
The dual of \Cref{eqn:DT011505} is
\begin{equation}\label{eqn:DT011506}
\wh{V}^{\mathrm{Dual}}_{\wh{\calI}^*}=\max_{\nu\in\R, \by\succeq0} ~\nu  ~~~\mbox{s.t.}~\nu\cdot\bm{e}^{|\wh{\calI}^*|} \leq \wh{A}_{\wh{\calI}^*, :}\by, ~(\bm{e}^{m_2})^\top\by=1.
\end{equation}
We now show that \Cref{alg:Idenbasis} correctly identifies the support for an optimal solution of the estimated LP $\wh{V}^{\mathrm{Dual}}_{\wh{\calI}^*}$, denoted by $\wh{\calJ}^*$. Note that following the procedure of \Cref{alg:Idenbasis}, we identify an index set $\wh{\mathcal{J}}\subset[m_2]$, where we set $y_j=0$ for each $j\in\wh{\mathcal{J}}^{*c}$. Note that we denote by $\hat{\bx}^*$ the optimal basic solution to $\wh{V}^{\mathrm{Prime}}$ that is supported on $\wh{\calI}^*$, which is also an optimal solution to the LP $\wh{V}^{\mathrm{Prime}}_{\wh{\calI}^*}$. We further denote by $\hat{\by}^*$ an optimal solution to $\wh{V}^{\mathrm{Dual}}_{\wh{\calI}^*}$ that is supported on the set $\wh{\calJ}^*$. From the complementary slackness condition, the corresponding constraints in $\wh{V}^{\mathrm{Prime}}_{\wh{\calI}^*}$ must be binding, i.e., $\wh{A}^\top_{:, \wh{\calJ}'}\hat{\bx}^*=\hat{\mu}^*\cdot\bm{e}$. Moreover, note that $\hat{\bx}^*_{(\wh{\calI}^*)^{c}}=\bm{0}$, we must have
\begin{equation}\label{eqn:082201}
\wh{A}^\top_{:, \wh{\calJ}'}\hat{\bx}^* = \wh{A}^\top_{\wh{\calI}^*, \wh{\calJ}'}\hat{\bx}^*_{\wh{\calI}^*} = \hat{\mu}^*\cdot \bm{e}^{|\wh{\calJ}'|}.
\end{equation}
Now, note that we have $|\wh{\calI}^*| = |\wh{\calJ}^*|$ and it is ensured that the matrix $\begin{bmatrix}
\wh{A}^\top_{\wh{\calI}^*, \wh{\calJ}^*} & -\bm{e}^{|\wh{\calJ}^*|} \\
(\bm{e}^{|\wh{\calI}^*|})^\top & 0
\end{bmatrix}$ is a non-singular matrix. We know that the solution $\hat{\bx}^*_{\wh{\calI}^*}$ is a unique solution to the lineaer system in \Cref{eqn:082201}. In this way, we show that $\wh{\calI}^*$ and $\wh{\calJ}^*$ forms an optimal basis to the estimated LP $\wh{V}^{\mathrm{Prime}}$, i.e., the conditions in \Cref{eqn:Lsystem1} and \Cref{lem:Basis} are satisfied by $\wh{\calI}^*$ and $\wh{\calJ}^*$ for the estimated LP $\wh{V}^{\mathrm{Prime}}$,

From the terminating condition in line 4 of \Cref{alg:Twophase}, we know that $|\wh{\calI}^*|=|\wh{\calJ}^*|$. We then show that the square matrix $\begin{bmatrix}
{A}^\top_{\wh{\calI}^*, \wh{\calJ}^*} & -\bm{e}^{|\wh{\calJ}^*|} \\
(\bm{e}^{|\wh{\calI}^*|})^\top & 0
\end{bmatrix}$ is a non-singular matrix. Suppose that the smallest singular value of the matrix $\begin{bmatrix}
{A}^\top_{\wh{\calI}^*, \wh{\calJ}^*} & -\bm{e}^{|\wh{\calJ}^*|} \\
(\bm{e}^{|\wh{\calI}^*|})^\top & 0
\end{bmatrix}$ is $0$, which implies singularity.
Denote by $\wh{\sigma}_{\wh{\calI}^*, \wh{\calJ}^*}$ the smallest singular value of the matrix $\begin{bmatrix}
\wh{A}^\top_{\wh{\calI}^*, \wh{\calJ}^*} & -\bm{e}^{|\wh{\calJ}^*|} \\
(\bm{e}^{|\wh{\calI}^*|})^\top & 0
\end{bmatrix}$. Following Theorem 1 of \cite{stewart1998perturbation}, we know that the difference of the corresponding singular values of the matrices $\begin{bmatrix}
{A}^\top_{\wh{\calI}^*, \wh{\calJ}^*} & -\bm{e}^{|\wh{\calJ}^*|} \\
(\bm{e}^{|\wh{\calI}^*|})^\top & 0
\end{bmatrix}$ and $\begin{bmatrix}
\wh{A}^\top_{\wh{\calI}^*, \wh{\calJ}^*} & -\bm{e}^{|\wh{\calJ}^*|} \\
(\bm{e}^{|\wh{\calI}^*|})^\top & 0
\end{bmatrix}$ is upper bounded by $\ell_2$-norm of their difference, i.e.,
\[
|\wh{\sigma}_{\wh{\calI}^*, \wh{\calJ}^*}-0|\leq\left\|\begin{bmatrix}
{A}^\top_{\wh{\calI}^*, \wh{\calJ}^*} & -\bm{e}^{|\wh{\calJ}^*|} \\
(\bm{e}^{|\wh{\calI}^*|})^\top & 0
\end{bmatrix}-\begin{bmatrix}
\wh{A}^\top_{\wh{\calI}^*, \wh{\calJ}^*} & -\bm{e}^{|\wh{\calJ}^*|} \\
(\bm{e}^{|\wh{\calI}^*|})^\top & 0
\end{bmatrix} \right\|_2 \leq |\wh{\calI}^*|\cdot |\wh{\calJ}^*|\cdot\Rad(N'/m, \eps/m).
\]
Therefore, we know that $\wh{\sigma}_{\wh{\calI}^*, \wh{\calJ}^*}\leq |\wh{\calI}^*|\cdot |\wh{\calJ}^*|\cdot\Rad(N', \eps/m)$, which contradicts the condition in line 14 of \Cref{alg:Idenbasis} that requires $\wh{\sigma}_{\wh{\calI}^*, \wh{\calJ}^*}> |\wh{\calI}^*|\cdot |\wh{\calJ}^*|\cdot\Rad(N', \eps/m)$. In this way, we guarantee that $\begin{bmatrix}
{A}^\top_{\wh{\calI}^*, \wh{\calJ}^*} & -\bm{e}^{|\wh{\calJ}^*|} \\
(\bm{e}^{|\wh{\calI}^*|})^\top & 0
\end{bmatrix}$ is a non-singular matrix.


Finally, it remains to show that $\wh{\calI}^*$ and $\wh{\calJ}^*$ is also an optimal basis to the original LP $\VI$ and its dual $\VD$ as long as $N_1\geq\frac{2m}{\delta^2} \cdot \log(8m/\veps)$.
Suppose that $\wh{\calI}^*$ is not an optimal basis to the original LP $\VI$, then following the definition of $\delta=\min\{\delta_1, \delta_2\}$ with $\delta_1$ and $\delta_2$ given in Definition \ref{def:delta}, we must have
\[
\VI_{\wh{\calI}^*} > \VI + \delta.
\]
On the other hand, since we condition on the event $\mathcal{E}$ happens, we must have
\[
\left| \wh{A}_{i,j}-A_{i,j}\right|\leq \Rad(N_1/m, \veps/(8m)), ~~\forall i\in[m_1], j\in[m_2].
\]
From \Cref{eqn:121401} in \Cref{claim:Bound1}, we know that
\[
|\VI_{\wh{\calI}^*}-\wh{V}_{\wh{\calI}^*}^{\mathrm{Prime}}| \leq \Rad(N'/m, \veps/(8m)),
\]
and
\[
|\VI-\wh{V}^{\mathrm{Prime}}| \leq\Rad(N'/m, \veps/(8m)).
\]
However, we know that
\[
\wh{V}^{\mathrm{Prime}}_{\wh{\calI}^*} = \wh{V}^{\mathrm{Prime}}
\]
since $\wh{\calI}^*$ is an optimal basis to $\wh{V}^{\mathrm{Prime}}$. Therefore, we conclude that we have
\begin{equation}\label{eqn:DT012301}
\VI+2\Rad(N'/m,\veps/(8m))\geq \VI_{\wh{\calI}^*}>\VI+\delta.
\end{equation}
However, the inequality \Cref{eqn:DT012301} contradicts with the condition that $\Rad(N'/m, \veps/(8m))\leq\delta/2$ according to the definition of $\Rad(N'/m,\eps/m)$ since $N'\geq N_1$. Therefore, we know that $\wh{\calI}^*$ is also an optimal basis to $\VI$. 

We then show that $\wh{\calJ}^*$ is an optimal basis to the dual LP $\VD_{\wh{\calI}^*}$. Suppose that $\wh{\calJ}^*$ is not an optimal basis to the original dual LP $\VD_{\wh{\calI}^*}$, then following the definition of $\delta=\min\{\delta_1, \delta_2\}$ with $\delta_1$ and $\delta_2$ given in Definition \ref{def:delta}, we must have
\[
\VD_{\wh{\calI}^*, \wh{\calJ}^*} < \VD_{\wh{\calI}^*} - \delta.
\]
On the other hand, since we condition on the event $\mathcal{E}$ happens, we must have
\[
\left| \wh{A}_{i,j}-A_{i,j}\right|\leq \Rad(N_1/m, \veps/(8m)), ~~\forall i\in[m_1], j\in[m_2].
\]
From \Cref{eqn:121412} in \Cref{claim:Bound2}, we know that 
\[
|\VD_{\wh{\calI}^*, \wh{\calJ}^*} - \wh{V}^{\mathrm{Dual}}_{\wh{\calI}^*, \wh{\calJ}^*}| \leq \Rad(N'/m, \veps/(8m)),
\]
and
\[
|\VD_{\wh{\calI}^*} - \wh{V}^{\mathrm{Dual}}_{\wh{\calI}^*}| \leq \Rad(N'/m, \veps/(8m)).
\]
However, we know that 
\[
\wh{V}^{\mathrm{Dual}}_{\wh{\calI}^*, \wh{\calJ}^*}=\wh{V}^{\mathrm{Dual}}_{\wh{\calI}^*}
\]
since $\wh{\calJ}^*$ is an optimal basis to $\wh{V}^{\mathrm{Dual}}_{\wh{\calI}^*}$. Therefore, we conclude that we have
\begin{equation}\label{eqn:012302}
\VD_{\wh{\calI}^*} - 2\Rad(N'/m, \veps/(8m)) \leq \VD_{\wh{\calI}^*, \wh{\calJ}^*} \leq \VD_{\wh{\calI}^*} - \delta.
\end{equation}
However, the inequality \Cref{eqn:012302} contradicts with the condition that $\Rad(N'/m, \veps/(8m))\leq\delta/2$ since $N'\geq N_1$. Therefore, we know that $\wh{\calJ}^*$ is also an optimal basis to $\VD_{\wh{\calI}^*}$. We conclude that $\wh{\calI}^*$ and $\wh{\calJ}^*$ is an optimal basis to the original LP $\VI$ and its dual $\VD$, i.e., $\wh{\calI}^*$ and $\wh{\calJ}^*$ satisfy the conditions described in \Cref{lem:Basis} for the original LP $\VI$ and $\VD$. Since we call \Cref{alg:Idenbasis} by a number of $K$ times in the execution of \Cref{alg:Twophase}, we take the union bound, and we know that with a probability at least $1-K\cdot\eps$, $\wh{\calI}^*$ and $\wh{\calJ}^*$ is an optimal basis to the original LP $\VI$ and its dual $\VD$, i.e., $\wh{\calI}^*$ and $\wh{\calJ}^*$ satisfy the conditions described in \Cref{lem:Basis} for the original LP $\VI$ and $\VD$, which completes our proof.

\noindent\textbf{Proof of Argument 2}: We now prove our second argument that for an arbitrary $N_1$, with probability at least $1-\veps/4$, the solution $\bx^*(\wh{\calI}^*, \wh{\calJ}^*)$ forms a feasible solution to $\VI$ 
with a sub-optimality gap bounded by $\order\left(\wh{\kappa}\cdot\Rad(N'/m, \eps/m)\right)$. Since the doubling trick in line 4 of \Cref{alg:Twophase} guarantees that we have $|\wh{\calI}^*| = |\wh{\calJ}^*|$, we can directly apply the result in \Cref{thm:FiniteGap}. Since we call \Cref{alg:Idenbasis} by a number of $K$ times in the execution of \Cref{alg:Twophase}, we take the union bound, and we know that with a probability at least $1-\veps/4$, the solution $\bx^*(\wh{\calI}^*, \wh{\calJ}^*)$ forms a feasible solution to $\VI$ 
with a sub-optimality gap bounded by $\order\left(\wh{\kappa}\cdot\Rad(N'/m, \eps/m)\right)$ with $\eps=\veps/(8K^2)$, which completes our proof of the second argument.

\noindent\textbf{Proof of Argument 3}: We now bound the value of $N_2$. Again, we condition on the event $\mathcal{E}$ happens and we set the value of $\eps =\veps/(8K^2)$ (the value of $K$ will be bounded later). 
We now show that when $N'\geq \frac{2m}{\sigma_0^2}\cdot\log(2m/\epsilon)$ for any two index set $\calI$ and $\calJ$, the smallest singular value of the matrix $\begin{bmatrix}
\wh{A}^\top_{\calI, \calJ} & -\bm{e}^{|\calJ|} \\
(\bm{e}^{|\calI|})^\top & 0
\end{bmatrix}$ is either $0$ or be greater than $|\calI||\calJ|\cdot\Rad(N'/m, \eps/m)$, which implies that the operation of checking $\wh{\sigma}_{\calI, \calJ'}> |\calI||\calJ'|\cdot\Rad(N', \eps/m)$ in line 14 of \Cref{alg:Idenbasis} is equivalent to checking whether the matrix $\begin{bmatrix}
\wh{A}^\top_{\calI, \calJ'} & -\bm{e}^{|\calJ'|} \\
(\bm{e}^{|\calI|})^\top & 0
\end{bmatrix}$ is singular or not. 
Rearranging the terms in $N'\geq \frac{2m}{\sigma_0^2}\cdot\log(2m/\epsilon)$, we know that
\[
\sigma_0\geq \Rad(N'/m, \eps/m)
\]
using the definition of  $\Rad(N'/m,\eps/m)$.
Denote by $\sigma_{\calI, \calJ}$ the smallest singular value of the matrix $\begin{bmatrix}
{A}^\top_{\calI, \calJ} & -\bm{e}^{|\calJ|} \\
(\bm{e}^{|\calI|})^\top & 0
\end{bmatrix}$ and denote by $\wh{\sigma}_{\calI, \calJ}$ the smallest singular value of the matrix $\begin{bmatrix}
\wh{A}^\top_{\calI, \calJ} & -\bm{e}^{|\calJ|} \\
(\bm{e}^{|\calI|})^\top & 0
\end{bmatrix}$. Following Theorem 1 of \cite{stewart1998perturbation}, we know that the difference of the corresponding singular values of the matrices $\begin{bmatrix}
{A}^\top_{\calI, \calJ} & -\bm{e}^{|\calJ|} \\
(\bm{e}^{|\calI|})^\top & 0
\end{bmatrix}$ and $\begin{bmatrix}
\wh{A}^\top_{\calI, \calJ} & -\bm{e}^{|\calJ|} \\
(\bm{e}^{|\calI|})^\top & 0
\end{bmatrix}$ is upper bounded by $\ell_2$-norm of their difference, i.e.,
\[
|\wh{\sigma}_{\calI, \calJ}-\sigma'_{\calI, \calJ}|\leq\left\|\begin{bmatrix}
{A}^\top_{\calI, \calJ} & -\bm{e}^{|\calJ|} \\
(\bm{e}^{|\calI|})^\top & 0
\end{bmatrix}-\begin{bmatrix}
\wh{A}^\top_{\calI, \calJ} & -\bm{e}^{|\calJ|} \\
(\bm{e}^{|\calI|})^\top & 0
\end{bmatrix} \right\|_2 \leq |\calI|\cdot |\calJ|\cdot\Rad(N'/m, \eps/m).
\]
Therefore, $\sigma'_{\calI, \calJ}=0$ implies that $\wh{\sigma}_{\calI, \calJ}\leq |\calI||\calJ|\cdot\Rad(N'/m, \eps/m)$. On the other hand, if $\sigma'_{\calI, \calJ}>0$, then it holds that
\[
\sigma'_{\calI, \calJ}\geq2|\calI||\calJ|\cdot\sigma_{0}\geq 2|\calI||\calJ|\cdot\Rad(N'/m, \eps/m),
\]
{where the first inequality is by definition of $\sigma_0$ in \Cref{def:sigma0} and the second inequality is due to the condition of $N'$.}
As a result, we have
\begin{align*}
    \wh{\sigma}_{\calI, \calJ} \geq \sigma'_{\calI, \calJ} - |\calI|\cdot |\calJ|\cdot\Rad(N'/m, \eps'/m) \geq |\calI|\cdot |\calJ|\cdot\Rad(N'/m,\eps/m).
\end{align*}
Combining both cases, we show that as long as $N'\geq \frac{2m}{\sigma_0^2}\cdot\log(2m/\epsilon)$,
the operation of checking $\wh{\sigma}_{\calI, \calJ'}> |\calI||\calJ'|\cdot\Rad(N'/m, \eps/m)$ in line 14 of \Cref{alg:Idenbasis} is equivalent to checking whether the matrix $\begin{bmatrix}
{A}^\top_{\calI, \calJ'} & -\bm{e}^{|\calJ'|} \\
(\bm{e}^{|\calI|})^\top & 0
\end{bmatrix}$ and $\begin{bmatrix}
\wh{A}^\top_{\calI, \calJ'} & -\bm{e}^{|\calJ'|} \\
(\bm{e}^{|\calI|})^\top & 0
\end{bmatrix}$ is singular or not.

Note that the index set $\wh{\calJ}^*$ is obtained from a basic optimal solution to the dual LP \Cref{eqn:DT011506}, where there are $|\wh{\calI}^*|+1$ constraints. Thus, the matrix $\begin{bmatrix}
\wh{A}^\top_{\wh{\calI}^*, \wh{\calJ}^*} & -\bm{e}^{|\wh{\calJ}^*|} \\
(\bm{e}^{|\wh{\calI}^*|})^\top & 0
\end{bmatrix}$ can have at most $|\wh{\calI}^*|+1$ number of linearly independent columns, which implies that $|\wh{\calJ}^*| \leq |\wh{\calI}^*|$, and the columns in the matrix $\begin{bmatrix}
\wh{A}^\top_{\wh{\calI}^*, \wh{\calJ}^*} & -\bm{e}^{|\wh{\calJ}^*|} \\
(\bm{e}^{|\wh{\calI}^*|})^\top & 0
\end{bmatrix}$ are linearly independent from each other. Moreover, from the stopping condition in line 17 of \Cref{alg:Idenbasis}, we know that $|\wh{\calJ}^*| \geq |\wh{\calI}^*|$, which results in $|\wh{\calI}^*| = |\wh{\calJ}^*|$. In this way, we show that as long as $N'\geq \frac{2m}{\sigma_0^2}\cdot\log(2m/\epsilon)$, we will meet the termination condition $|\wh{\calI}^*|=|\wh{\calJ}^*|$ described in line 4 of \Cref{alg:Twophase}. Note that we condition on the event $\mathcal{E}=\cup_k \mathcal{E}_k$ happens, which happens with a probability at least $1-\veps/4$.
Therefore, we know that with a probability at least $1-\veps/4$, we have that
\[
N' \leq \frac{4m}{\sigma_0^2}\cdot\log(2m/\epsilon)
\]
which implies that
\[
N_2\leq N_1 + \frac{8m}{\sigma_0^2}\cdot\log(2m/\epsilon).
\]
It only remains to bound the value of $K$ since we have $\eps=\veps/(8K^2)$. Note that every time calling \Cref{alg:Idenbasis}, we exponentially increase the sample size, which implies that $K=\log(N_2)$. Plugging in the above bound, we have that
\[
N_2\leq N_1 + \frac{8m}{\sigma_0^2}\cdot\log(16mK^2/\veps)\leq N_1 + \frac{8m}{\sigma_0^2}\cdot\log(16m\log^2(N_2)/\veps),
\]
which implies that 
\[
N_2\leq N_1 + \order\left(\frac{m}{\sigma_0^2}\cdot\log(m/\veps)\right).
\]
Our proof is thus completed.

\subsection{Proof of \Cref{lem:resolving}}
We now fix the failure probability $\eps=\veps/(N-N_2+1)^2$.
For a fixed dataset $\mathcal{H}^n=\{(i_k,j_k,\tilde{A}_{k,i_k,j_k})\}_{k=1}^n$, we denote by $\wh{A}(\mathcal{H}^n)$ the sample-average estimation of $A$ constructed from the dataset $\mathcal{H}^n$. Let $n$ denote the iteration over $n$ in \Cref{alg:Twophase}.
In the following, we denote $(\bx^*(\wh{\calI}^*, \wh{\calJ}^*), \mu^*(\wh{\calI}^*, \wh{\calJ}^*))$ as $(\bx^*, \mu^*)$. For a fixed $(\wh{\calI}^*, \wh{\calJ}^*)$, we now consider the gap between $\mathbb{E}[\bar{\bx}]$ and $\bx^*(\wh{\calI}^*, \wh{\calJ}^*)$.

We consider the stochastic process $\tilde{\bm{a}}^{n}$ defined in \Cref{eqn:Average}. For a fixed $\eta>0$ which will be specified later, we define sets
\begin{equation}\label{eqn:defX}
\mathcal{A}=\{ \tilde{\bm{a}}'\in\mathbb{R}^{|\wh{\mathcal{J}}^*|}: \tilde{a}'_{j}\in[-\eta, +\eta], \forall j\in\wh{\mathcal{J}}^* \}.
\end{equation}
Since $\tilde{\bm{a}}^{N_2+1}=\mathbf{0}$, we have  $\tilde{\bm{a}}^{N_2+1}\in\mathcal{A}$. Then, we show that $\tilde{\bm{a}}^n$ ``behave well'' as long as they stay in the region $\mathcal{A}$ for a
sufficiently long time, where $\tilde{\ba}^n=\frac{\ba^n}{N-n+1}$. To this end, we define a stopping time
\begin{equation}\label{eqn:Stoptime}
\tau=\min_{n\in[N_2+1,N]}\{ \tilde{\bm{a}}^n \notin\mathcal{A} \}.
\end{equation}
In this proof, we also define the matrix
\[
    M(\wh{\calI}^*, \wh{\calJ}^*) = \begin{bmatrix}
A_{\wh{\calI}^*, \wh{\calJ}^*} & -\bm{e}^{|\wh{\calI}^*|} \\
(\bm{e}^{|\wh{\calJ}^*|})^\top & 0
\end{bmatrix}
\]
corresponding to the definition in \Cref{eqn:true},
and denote by $\kappa$ the conditional number of the matrix $M(\wh{\calI}^*, \wh{\calJ}^*)$ defined above.
The first lemma shows that when $n$ is large enough but smaller than the stopping time $\tau$, we have $ \|(\bm{x}^n, \mu^n)\|_2\leq L$ with $L=4$. 
The proof is deferred to \Cref{app:lemProjection}.
\begin{lemma}\label{lem:projection}
There exist a constant $N'_0$  
such that {$N_2+N'_0\leq\tau$ and} the event
\[
\mathcal{E}_0'=\left\{ N_2+N'_0\leq\tau\text{~and~for~all~}n\text{~such~that~}N_2+N_0'\leq n\leq\tau, \text{it~holds~}\|(\tilde{\bm{x}}^{n}, \tilde{\mu}^{n})\|_2\leq L \right\}
\]
happens with probability at least $1-2N\cdot\eps$, where $(\tilde{\bm{x}}^{n}, \tilde{\mu}^{n})$ denotes the solution to \Cref{eqn:OptQ2}, $\tau$ is defined in \Cref{eqn:Stoptime} with 
\begin{align}\label{eqn:nu0}
    \eta=\frac{1}{8\sqrt{d}\cdot\kappa},
\end{align}
where $\kappa$ denotes the conditional number of the matrix $M(\wh{\calI}^*, \wh{\calJ}^*)$.
Specifically, $N_0'$ is given as follows
\begin{equation}\label{eqn:N0prime}
N_0'= 32d^4\cdot\frac{\kappa^2}{\|M(\wh{\calI}^*, \wh{\calJ}^*)\|_2^2}\cdot\log(2d^2/\eps).
\end{equation}
\end{lemma}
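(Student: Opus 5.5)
The plan is to compare the resolving system \Cref{eqn:OptQ2} with the exact system \Cref{eqn:IJOptQ} and to treat it as a perturbation problem in two quantities: the matrix error $\Delta A^n = \wh{A}_{\wh{\calI}^*,\wh{\calJ}^*}(\calH^n)-A_{\wh{\calI}^*,\wh{\calJ}^*}$, and the right-hand side perturbation $\tilde{\ba}^n$.

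The first part of $\mathcal{E}_0'$, namely $N_2+N_0'\le\tau$, is deterministic. At each step we have $|\wt{A}|\le 1$ and $\|(\bx^n,\mu^n)\|_2\le L$, so by \Cref{eqn:UpdateAlpha2} each coordinate of $\ba^n$ changes by at most $d^2L+L$ per step. Hence $\|\tilde{\ba}^n\|_\infty\le (n-N_2)(d^2+1)L/(N-n+1)$. Recall that $N-N_2$ is chosen in line 6 of \Cref{alg:Twophase} to be of order $d^{15/2}\log(m/\veps)/((\sigma')^3\veps)$. This is much larger than $N_0'(d^2+1)L/\eta$, since $\eta=1/(8\sqrt d\kappa)$ and $N_0'=\order(d^4\kappa^2\log(d^2/\eps)/\|M\|_2^2)=\order(d^4\log/\sigma^2)$. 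So $\tilde{\ba}^n\in\mathcal{A}$ for all $n\le N_2+N_0'$. I expect to have to check the constants and the relation between $\sigma'$ and $\sigma$ through \Cref{thm:BoundN1}; if that turns out not to be automatic, I would add it as a mild condition, e.g.\ $\veps$ small.

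Next, concentration. For each $n\ge N_2+N_0'$ the dataset $\calH^n$ contains at least $N_0'$ uniform samples from $\wh{\calI}^*\times\wh{\calJ}^*$. By a Chernoff bound, each of the $d^2$ entries receives at least $N_0'/(2d^2)$ samples, except with probability $\eps$. Hoeffding's inequality then gives each entry error at most $\Rad(N_0'/(2d^2),\eps/d^2)$, again except with probability $\eps$. A union bound over $n\le N$ yields the stated $1-2N\eps$. On this event $\|\Delta A^n\|_2\le d\cdot\max_{ij}|\Delta A^n_{ij}|\le d\sqrt{d^2\log(2d^2/\eps)/N_0'}$. Substituting the formula \Cref{eqn:N0prime} gives $\|\Delta A^n\|_2\le \|M\|_2/(4\sqrt2\kappa)\le\sigma_{\min}(M)/4$.

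Finally, the size bound. By Weyl's inequality (as in \citet{stewart1998perturbation}), $\sigma_{\min}(\wh M^n)\ge \tfrac34\sigma_{\min}(M)$, so \Cref{eqn:OptQ2} is uniquely solvable. Write its solution as $(\bx^*,\mu^*)+$ correction. Using the perturbation bound of \citet{higham2002accuracy}, as in \Cref{eqn:012106}, the correction is at most
\[
\frac{\|\Delta A^n\|_2\,\|(\bx^*,\mu^*)\|_2+\|\tilde{\ba}^n\|_2}{\sigma_{\min}(M)-\|\Delta A^n\|_2}.
\]
For $n\le\tau$ we have $\|\tilde{\ba}^n\|_2\le\sqrt d\,\eta=1/(8\kappa)$, which after division is $\order(1/\|M\|_2)$. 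Bounding this by a small constant uses $\|M\|_2\ge \|(\bm e^{d})^\top\|=\sqrt d\ge1$. The exact solution also needs a norm bound. Here $\bx^*$ lies on a simplex face, so $\|\bx^*\|_2\le1$ when $\bx^*\succeq0$, and $|\mu^*|\le1$ because $A\in[-1,1]$. This gives $\|(\bx^*,\mu^*)\|_2\le\sqrt2$, and combining the pieces gives $\|(\tilde\bx^n,\tilde\mu^n)\|_2\le \sqrt2+\tfrac12+\tfrac12\le L=4$.

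I expect the main obstacle to be that last bound on the exact solution: it needs $\bx^*(\wh{\calI}^*,\wh{\calJ}^*)\succeq 0$, i.e.\ that the basis is feasible. If that is not available, I would instead bound $\|(\bx^*,\mu^*)\|_2\le \|(\bm 0,1)\|_2/\sigma_{\min}(M)$, and absorb the resulting factor into the definition of $L$ or of $N_0'$.
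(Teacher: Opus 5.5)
Your proposal is correct and takes essentially the same route as the paper. First, a deterministic growth bound on $\tilde{\bm a}^n$ combined with the choice of $N$ in line 6 gives $N_2+N_0'\le\tau$. Then sample-count and Hoeffding concentration, with a union bound over $n$, control the matrix error. Finally, a linear-system perturbation bound finishes the argument, with the right-hand-side error controlled by $\sqrt d\,\eta$; you use Weyl's inequality plus a direct bound on $\wh M^{-1}$ where the paper invokes Theorem 1 of \citet{higham2002accuracy}. The nonnegativity of $\bx^*(\wh{\calI}^*,\wh{\calJ}^*)$ that you flag is exactly what the paper also takes for granted when it writes $\|(\bx^*_{\wh{\calI}^*},\mu^*)\|_1\le L/2$, so your primary argument already matches the paper, and the fallback is not needed (it would not keep $L=4$ anyway).
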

The next lemma bounds $\mathbb{E}[N-\tau]$ with the proof deferred to \Cref{app:lemStoptime}.
\begin{lemma}\label{lem:Stoptime}
Let the stopping time $\tau$ be defined in \Cref{eqn:Stoptime} with $\eta=\frac{1}{8d^{3/2}\cdot\kappa}$. It holds that
\[
\mathbb{E}[N-\tau]\leq N_0'+
2\cdot \exp(-\eta^2/(8d^4))+(N-N_2)^2\cdot\eps,
\]
where $N_0'$ is given in \Cref{eqn:N0prime}, as long as
\begin{equation}\label{eqn:022104}
N\geq  N_2+1024 d\cdot\kappa^2\cdot\log(4d^2/\eps).
\end{equation}
In addition, for any $N'$ such that $N_2+N_0'\leq N'\leq N$, it holds that
\begin{equation}\label{eqn:HighProbtau}
\Pr[\tau\leq N']\leq \frac{\eta^2}{4d^4}\cdot \exp\left( -\frac{\eta^2(N-N')}{8d^4} \right)+(N-N_2)\cdot\eps.
\end{equation}
\end{lemma}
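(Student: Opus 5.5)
The plan is to treat $\tilde{\ba}^n$ as a stopped process made of a bounded-increment martingale plus a small drift. First I prove the tail bound \Cref{eqn:HighProbtau}; the bound on $\mathbb{E}[N-\tau]$ then follows by summing it.

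\textbf{Step 1 (decomposing the increment).} From \Cref{eqn:Aveupalpha},
\[
\tilde{\ba}^{n+1}-\tilde{\ba}^n=\frac{\tilde{\ba}^n-\bm{\xi}_n}{N-n},\qquad \bm{\xi}_n= |\wh{\calJ}^*||\wh{\calI}^*|\,\wt{A}_{n,i_n,j_n}x^n_{i_n}\bm{h}_{j_n}-\mu^n\bm{e}^{|\wh{\calJ}^*|}.
\]
Let $\mathcal{F}_n$ be the history up to round $n$. I work on the event $\mathcal{E}_0'$ of \Cref{lem:projection}, which has probability at least $1-2N\eps$. On $\mathcal{E}_0'$, for $N_2+N_0'\le n<\tau$ the projection is inactive, so $\wh{A}^\top_{\wh{\calI}^*,\wh{\calJ}^*}\bx^n_{\wh{\calI}^*}-\mu^n\bm{e}=\tilde{\ba}^n$. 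Combining this with \Cref{eqn:012501} gives
\[
\mathbb{E}[\bm{\xi}_n\mid\mathcal{F}_n]=\tilde{\ba}^n+(A-\wh{A})^\top_{\wh{\calI}^*,\wh{\calJ}^*}\bx^n_{\wh{\calI}^*}.
\]
Hence the increment splits as
\[
\tilde{\ba}^{n+1}-\tilde{\ba}^n=\frac{\bm{\zeta}_n}{N-n}-\frac{\bm{b}_n}{N-n}.
\]
Here $\bm{\zeta}_n$ is a martingale difference with $\|\bm{\zeta}_n\|_\infty\le c:=O(d^2L)$, since $|\wt A|\le1$ and $\|(\bx^n,\mu^n)\|_2\le L$. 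The drift $\bm{b}_n$ satisfies $\|\bm{b}_n\|_\infty\le L\cdot\max_{i,j}|\wh{A}_{ij}-A_{ij}|$. I control the drift by a Hoeffding bound on $\wh{A}(\mathcal{H}^n)$ at confidence $\eps$ per round, which costs the additive $(N-N_2)\eps$ terms. Rounds before $N_2+N_0'$ are covered directly by $\mathcal{E}_0'$, which already guarantees $\tau\ge N_2+N_0'$.

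\textbf{Step 2 (tail bound).} Fix $N'$ with $N_2+N_0'\le N'\le N$ and consider the stopped process up to $\tau\wedge N'$. Leaving $\mathcal{A}$ before $N'$ requires some coordinate of either the martingale part or the drift part to exceed $\eta/2$. For the martingale part, the squared increment bounds sum to
\[
\sum_{n\le N'}\frac{c^2}{(N-n)^2}\le\frac{c^2}{N-N'}.
\]
Applying Azuma--Hoeffding in maximal (Doob) form to each of the $d$ coordinates, and union-bounding over coordinates, gives a probability $\lesssim \exp(-\eta^2(N-N')/(8c^2))$. With $c\asymp d^2$ this is the $\exp(-\eta^2(N-N')/(8d^4))$ shape in \Cref{eqn:HighProbtau}. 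The drift contributes at most $\sum_n L\,d\,\Rad(n/d^2,\eps/d^2)/(N-n)$. The role of the condition \Cref{eqn:022104}, $N-N_2\ge 1024d\kappa^2\log(4d^2/\eps)$, is to make this sum at most $\eta/2$, with $\eta=1/(8d^{3/2}\kappa)$. I expect this to be the main obstacle. The estimation error early in the phase is large exactly where $1/(N-n)$ is small, while late rounds have small error but large weight. So the sum must be split at roughly the midpoint $(N+N_2)/2$, and the early rounds bounded crudely via $\sum_n n^{-1/2}/(N-n)$. A fallback if the constants fail is to absorb the late-round part into the $N_0'$ term.

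\textbf{Step 3 (expectation).} Use $\mathbb{E}[N-\tau]=\sum_{N'}\Pr[\tau\le N']$. The indices $N'<N_2+N_0'$ contribute at most $N_0'$ (using that $\mathcal{E}_0'$ fails with probability at most $2N\eps$). The indices $N'\ge N_2+N_0'$ contribute a geometric series in $N-N'$ from \Cref{eqn:HighProbtau}, which sums to a constant of order $\frac{\eta^2}{d^4}\big/\big(1-e^{-\eta^2/(8d^4)}\big)=O(1)$, matching the stated $2\exp(\cdot)$-type term, plus $(N-N_2)\cdot(N-N_2)\eps$ from the failure events. Adding these three contributions gives the claimed bound.
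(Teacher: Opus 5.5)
Your skeleton matches the paper's proof. You split $\tilde{\bm{a}}^{n+1}-\tilde{\bm{a}}^n$ into a martingale part and an estimation-error drift, use Azuma for the first and Hoeffding on $\wh{A}(\mathcal{H}^n)$ for the second, then sum tails; your maximal Azuma replaces the paper's union bound over $N''$. The gap is between Steps 2 and 3. Step 2 matches only the exponent of \Cref{eqn:HighProbtau}. Maximal Azuma per coordinate plus a union bound over the $d$ coordinates gives $\Pr[\tau\le N']\le 2d\,e^{-\eta^2(N-N')/(8c^2)}+(N-N_2)\eps$. So your ``$\lesssim$'' hides a prefactor $2d$, which is $8d^5/\eta^2$ times the stated $\eta^2/(4d^4)$. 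Step 3 then sums the stated prefactor, not yours. Summing your bound (capped at $1$) gives about $\frac{8c^2}{\eta^2}(1+\log 2d)$, which with $c\asymp d^2$ and $\eta=1/(8d^{3/2}\kappa)$ is of order $d^7\kappa^2\log d$, not $O(1)$.

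No sharper inequality can repair this, because \Cref{eqn:HighProbtau} is false for $N'$ near $N$, where $N-n=O(1)$ and the increments of $\tilde{\bm{a}}$ are undamped. Indeed $\tilde{\bm{a}}^N=2\tilde{\bm{a}}^{N-1}-\bm{\xi}_{N-1}$, so every $j'\neq j_{N-1}$ gets $\tilde a^N_{j'}=2\tilde a^{N-1}_{j'}+\mu^{N-1}$. Take the $2\times 2$ identity $A=I_2$ (so $d=\kappa=2$, game value $\frac12$) with $\wh{A}=A$ (e.g.\ noiseless feedback). On $\{\tau>N-1\}$ the system gives $\mu^{N-1}=(1-\tilde a^{N-1}_1-\tilde a^{N-1}_2)/2$ and $\tilde x^{N-1}_j=\mu^{N-1}+\tilde a^{N-1}_j\approx\frac12$, so the projection is inactive. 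Then $\tilde a^N_{j'}\ge\frac12-3\eta>\eta$, hence $\Pr[\tau\le N]\approx 1$. The claimed bound at $N'=N$ is $\eta^2/(4d^4)+(N-N_2)\eps\ll 1$ for the $\eps=\veps/(N-N_2+1)^2$ used in \Cref{lem:resolving}.

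The paper's prefactor comes from a slip in \Cref{eqn:011902}. There the single term $N''=N'$ is $2e^{-b^2(N-N')/(2d^4)}$, already larger than the claimed total $\frac{b^2}{d^4}e^{-b^2(N-N')/(2d^4)}$. The sum is really about $\frac{4d^4}{b^2}e^{-b^2(N-N')/(2d^4)}$. What your route does prove (given the stronger drift condition below) is the tail bound with prefactor $2d$. That gives $\mathbb{E}[N-\tau]\le N_0'+O(\frac{c^2}{\eta^2}\log d)+(N-N_2)^2\eps$, which still yields an $O(1/(N-N_2))$ bias in \Cref{lem:resolving}, only with larger powers of $d$ and $\kappa$.

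On the drift you flagged: with $\eta=1/(8d^{3/2}\kappa)$, \Cref{eqn:022104} is too weak for a per-round Hoeffding bound. The late rounds contribute $\sum_{n<N'}\frac{1}{N-n}\approx\log(N-N_2)$ times an error of order $d\sqrt{\log(d^2/\eps)/(N-N_2)}$. Keeping the drift below $\eta/2$ therefore needs at least $N-N_2\gtrsim d^5\kappa^2\log(d^2/\eps)\log^2(N-N_2)$. The midpoint split does not remove this logarithm. Absorbing it into $N_0'$ is misdirected, since $N_0'$ pays for the earliest rounds, not the late ones. The paper's \Cref{eqn:011904} reaches \Cref{eqn:022104} only by using $\eta=1/(8\sqrt{d}\,\kappa)$ and dropping factors of $d$ and $\log N$.

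Finally, \Cref{lem:projection} only gives $\|(\tilde{\bx}^n,\tilde{\mu}^n)\|_2\le L$. Your Step 1 identity $\wh{A}^\top\bx^n-\mu^n\bm{e}=\tilde{\bm{a}}^n$ needs the projection onto $\mathcal{L}$ to be inactive, which also requires $\tilde{\bx}^n\succeq\bm{0}$. That is guaranteed only if $\min_i x^*_i$ exceeds $\|\tilde{\bx}^n-\bx^*\|_\infty$, which that lemma controls only by $L/2$. The paper has the same omission.
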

Now we are ready to prove \Cref{thm:sampleComplexity}.
From the definition of the stopping time $\tau$ in \Cref{eqn:Stoptime}, we know that for each $j\in\wh{\mathcal{J}}^*$, it holds
\[
a_{j}^{\tau-1}\in\left[-(N-\tau+1)\cdot\eta, (N-\tau+1)\cdot\eta\right].
\]
Since $(i_n, j_n)$ is uniformly drawn from $\wh{\calI}^*\times \wh{\calJ}^*$ and is independent of the randomness of $\bx^n$, we know that
\begin{align*}
    &\mathbb{E}_{\tilde{A}_n, i_n, j_n}\left[\left|\wh{\mathcal{J}}^*\right|\cdot|\wh{\mathcal{I}}^*|\cdot \tilde{A}_{n,i_n,j_n}\cdot x^n_{i_n}\cdot \bm{h}_{j_n}\right]=\sum_{i=1}^{|\wh{\calI}^*|}\sum_{j=1}^{|\wh{\calJ}^*|}A_{ij}x_i^n\cdot \bh_j = A_{\wh{\calI}^*,\wh{\calJ}^*}^\top \bx^n.
\end{align*}
Then, following the update in \Cref{eqn:UpdateAlpha2}, we know that
\begin{align*}
\mathbb{E}[\bm{a}^{N+1}]&=\mathbb{E}\left[ \bm{a}^{\tau-1} \right] 
-\sum_{n=\tau}^N A^\top_{\wh{\mathcal{I}}^*, \wh{\mathcal{J}}^*}\mathbb{E}[\bx^n_{\wh{\mathcal{I}}^*}]+\sum_{n=\tau}^N\mathbb{E}[\mu^n]\cdot \bm{e}^{|\wh{\mathcal{J}}^*|},
\end{align*}
Thus,
we have that
\[
\left\|\mathbb{E}[\bm{a}^{N+1}]\right\|_{\infty}\leq \left\|\mathbb{E}[\bm{a}^{\tau-1}]\right\|_{\infty} + \left\|\sum_{n=\tau}^N \left(A^\top_{\wh{\mathcal{I}}^*, \wh{\mathcal{J}}^*} \mathbb{E}[\bx^n_{\wh{\mathcal{I}}^*}] - \mathbb{E}[\mu^n]\cdot\bm{e}^{|\wh{\mathcal{J}}^*|}\right) \right\|_{\infty}.
\]
Following the definition of the stop time $\tau$ in \Cref{eqn:Stoptime}, since it is easy to show that $\eta\leq1$, we know that
\[
\left\|\mathbb{E}[\bm{a}^{\tau-1}]\right\|_{\infty} \leq \mathbb{E}[N-\tau+2].
\]
Also, from the boundedness that 
\[
\|(\bx^n, \mu^n)\|_1\leq \sqrt{d+1}\cdot\|(\bx^n, \mu^n)\|_2\leq\sqrt{d+1}\cdot L\leq 2\sqrt{d}\cdot L,
\]
we know that
\[\begin{aligned}
\left\|\sum_{n=\tau}^N \left(A^\top_{\wh{\mathcal{I}}^*, \wh{\mathcal{J}}^*} \mathbb{E}[\bx^n_{\wh{\mathcal{I}}^*}] - \mathbb{E}[\mu^n]\cdot\bm{e}^{|\wh{\mathcal{J}}^*|}\right) \right\|_{\infty}
\leq 2L\sqrt{d}\cdot(N-\tau).
\end{aligned}\]
As a result, for each $j\in\wh{\mathcal{J}}^*$, it holds that
\begin{equation}\label{eqn:011907}
\begin{aligned}
&\left|\mathbb{E}[a_{j}^{N+1}]\right| \leq  (L\sqrt{d}+1)\cdot\mathbb{E}[N-\tau]+2\\
\leq &  (L\sqrt{d}+1)\cdot N_0'+
2(L\sqrt{d}+1)\cdot \exp(-\eta^2/(8d^4))+(L\sqrt{d}+1)(N-N_2)^2\cdot\eps+2,
\end{aligned}
\end{equation}
where the second inequality holds because of the upper bound on $\mathbb{E}[N-\tau]$ established in \Cref{lem:Stoptime}.
We further note that following the update in \Cref{eqn:UpdateAlpha2}, we have that 
\begin{align*}
\mathbb{E}[\bm{a}^{N+1}]
&=-\sum_{n=N_2}^N A^\top_{\wh{\mathcal{I}}^*, \wh{\mathcal{J}}^*}\mathbb{E}[\bx^n_{\wh{\mathcal{I}}^*}]+\sum_{n=N_2}^N\mathbb{E}[\mu^n]\cdot \bm{e}^{|\wh{\mathcal{J}}^*|}\\
&= -(N-N_2+1)\cdot A^\top_{\wh{\mathcal{I}}^*, \wh{\mathcal{J}}^*}\mathbb{E}[\bar{\bx}_{\wh{\mathcal{I}}^*}] + (N-N_2+1)\cdot\mathbb{E}[\bar{\mu}]\cdot\bm{e}^{|\wh{\mathcal{J}}^*|},
\end{align*}
where $\bar{\mu}=\frac{1}{N-N_2+1}\cdot\sum_{n=N_2}^N \mu^n$ and $\bar{\bx}=\frac{1}{N-N_2+1}\cdot\sum_{n=N_2}^N \bx^n$. 
Thus, we conclude that it holds 
\begin{equation}\label{eqn:OptQ10}
\begin{bmatrix}
A^\top_{\wh{\mathcal{I}}^*, \wh{\mathcal{J}}^*} & -\bm{e}^{|\wh{\mathcal{J}}^*|} \\
(\bm{e}^{|\wh{\mathcal{I}}^*|})^\top & 0
\end{bmatrix}
\begin{bmatrix}
\mathbb{E}[\bar{\bx}_{\wh{\mathcal{I}}^*}]\\
\mathbb{E}[\bar{\mu}]
\end{bmatrix}=
\begin{bmatrix}
-\frac{\mathbb{E}[\bm{a}^{N+1}]}{N-N_2}\\
1
\end{bmatrix},
\end{equation}
We now compare the linear equations \Cref{eqn:OptQ10} to \Cref{eqn:OptQ} to bound the gap between $\bar{\bx}$ and $\bx^*$, where $\bx_{\wh{\calI}^{*c}}^*=\bar{\bx}_{\wh{\calI}^{*c}}=\mathbf{0}$. Following standard perturbation analysis of linear equations (Theorem 1 of \citep{higham2002accuracy}), we know that
\begin{equation}\label{eqn:011510}
\begin{aligned}
    \|\bm{x}^*-\mathbb{E}[\bar{\bm{x}}]\|_2
    &\leq \kappa\cdot \frac{\left\|\mathbb{E}[\bm{a}^{N+1}]\right\|_2}{N-N_2},
\end{aligned}
\end{equation}
where $\kappa$ is the condition number of the matrix $M(\wh{\calI}^*, \wh{\calJ}^*)$.
We now plug in the formulation of $N'_0$ that is defined in \Cref{eqn:N0prime}, which also satisfies the condition in \Cref{eqn:022104}. {Through the bound in \Cref{eqn:011907} and \Cref{eqn:011510}}, we have
\begin{equation}\label{eqn:011512}
\begin{aligned}
\|\bm{x}^*-\mathbb{E}[\bar{\bm{x}}]\|_2&\leq \kappa\cdot \frac{\left\|\mathbb{E}[\bm{a}^{N+1}]\right\|_2}{N-N_2}\leq \sqrt{d}\kappa\cdot \frac{\left\|\mathbb{E}[\bm{a}^{N+1}]\right\|_\infty}{N-N_2}\\
&\leq \frac{\sqrt{d}\kappa}{N-N_2}\cdot\left((L\sqrt{d}+1)\cdot N_0'+
2(L\sqrt{d}+1)\cdot \exp(-\eta^2/(8d^4))+(L\sqrt{d}+1)N^2\cdot\eps+2\right)\\
&\leq 256d\kappa\cdot\left( d^4\cdot\frac{\kappa^2}{\|A^*\|_2^2}+
1\right)\cdot\log(2d^2/\eps)\cdot\frac{1}{N-N_2}+\frac{10d\kappa}{N-N_2}+10d\kappa (N-N_2)\cdot\eps,
\end{aligned}
\end{equation}
where the last inequality follows from upper bounding 
\[
\exp(-\eta^2/(8d^4))\leq 1 \text{~and~} (L\sqrt{d}+1)(N-N_2)^2\cdot\eps+2\leq 2(L\sqrt{d}+1)(N-N_2)^2\cdot\eps\leq 10\sqrt{d}(N-N_2)^2\cdot\eps.
\]
In order to translate the bound in \Cref{eqn:011512} into the sample complexity bound, we let the right hand sides of \Cref{eqn:011512} to be upper bounded $\veps/(2d)$ and we specify $\eps=\veps/(N-N_2+1)^2$. Then, we would require the following condition to be satisfied 
\begin{equation}\label{eqn:090801}
N-N_2\geq 552 d^{2}\cdot\kappa\cdot\left(1+d^4\cdot\frac{\kappa^2}{\|A^*\|_2^2}\right)\cdot\frac{\log(1/\veps)}{\veps} =552d^{2}\cdot\kappa\cdot\left(1+d^4\cdot\frac{1}{\sigma_{\wh{\calI}^*, \wh{\calJ}^*}^2}\right)\cdot\frac{\log(1/\veps)}{\veps}.
\end{equation}
Further upper bounding $\kappa$ by $d/\sigma_{\wh{\calI}^*, \wh{\calJ}^*}$ and note that $1+d^4\cdot\frac{1}{\sigma_{\wh{\calI}^*, \wh{\calJ}^*}}\leq 2d^4\cdot\frac{1}{\sigma_{\wh{\calI}^*, \wh{\calJ}^*}^2}$, we have that setting
\[
N-N_2 =\frac{ 4120d^{15/2}}{(\sigma_{\wh{\calI}^*, \wh{\calJ}^*})^3}\cdot\frac{\log(m/\veps)}{\veps}
\]
would satisfy the condition in \Cref{eqn:090801}.
Our proof is thus completed.

\subsubsection{Proof of \Cref{lem:projection}}\label{app:lemProjection}
We first prove that $N_2+N'_0\leq\tau$. 
From the update rule in \Cref{eqn:UpdateAlpha2}, we know that
\[
\bm{a}^{N_2+N'_0}=\bm{a}^{N_2+1}- \sum_{n=N_2+1}^{N_2+N'_0-1}|\wh{\calJ}^*|\cdot|\wh{\calI}^*|\cdot \wt{A}_{n, i_n,j_n}\cdot x^n_{i_n}\cdot \bm{h}_{j_n} +\sum_{n=N_2+1}^{N_2+N'_0-1}\mu^n\cdot \bm{e}^{|\wh{\calJ}^*|}
\]
Noting that from definition $\bm{a}^{N_2+1}=\bm{0}$ and $(\bx^n, \mu^n)\in\mathcal{L}$ such that $\|(\bx^n, \mu^n)\|_\infty\leq\|(\bx^n, \mu^n)\|_2\leq L$, for each element $j\in\wh{\calJ}^*$, we have that
\[
\bm{a}_j^{N_2+N'_0}\leq d^2\cdot N_0'\cdot L + L\cdot N_0'\leq (d^2+1)L\cdot N_0',
\]
which implies that
\[
\|\bm{a}^{N_2+N'_0}\|_{\infty}\leq (d^2+1)L\cdot N_0'.
\]
Then, we know that
\[
\|\tilde{\bm{a}}^{N_2+N'_0}\|_{\infty}\leq \frac{(d^2+1)L\cdot N_0'}{N-N_2-N_0'}.
\]
As a result, as long as $N$ is large enough such that
\begin{equation}\label{eqn:092301}
N\geq N_2+\left( \frac{(d^2+1)L}{\eta}+1 \right)\cdot N_0',
\end{equation}
we will have $\|\tilde{\bm{a}}^{N_2+N'_0}\|_{\infty}\leq\eta$ which implies that $N_2+N'_0\leq\tau$. Plugging in the formulation of $N$, $N'_0$ and $\eta$, we have that
\[
N-N_2=\frac{ 4120d^{15/2}}{(\sigma_{\wh{\calI}^*, \wh{\calJ}^*})^3}\cdot\frac{\log(m/\veps)}{\veps}
\]
and 
\[\begin{aligned}
\left( \frac{(d^2+1)L}{\eta}+1 \right)\cdot N_0'&\leq 128d^{5/2}\kappa\cdot N_0'\leq 4120 d^{13/2}\cdot\frac{\kappa^3}{\|M(\wh{\calI}^*, \wh{\calJ}^*)\|_2^2}\cdot\log(m/\eps)\\
&=4120 d^{13/2}\cdot\frac{\kappa}{\sigma_{\wh{\calI}^*, \wh{\calJ}^*}^2}\cdot\log(m/\eps)\leq \frac{ 4120d^{15/2}}{(\sigma_{\wh{\calI}^*, \wh{\calJ}^*})^3}\cdot\frac{\log(m/\veps)}{\veps},
\end{aligned}\]
where we upper bound $\kappa$ by $d/\sigma_{\wh{\calI}^*, \wh{\calJ}^*}$. Therefore,
we know that the condition in \Cref{eqn:092301} is satisfied and in this way, we finish our proof that $\|\tilde{\bm{a}}^{N_2+N'_0}\|_{\infty}\leq\eta$ which implies that $N_2+N'_0\leq\tau$.

We now finish our remaining proof. Denote by $(\bm{x}^*, \mu^*)$ the basic solution corresponding to the basis $\wh{\calI}^*$ and $\wh{\calJ}^*$, as defined as the unique solution to \Cref{eqn:OptQ}. Then, it holds that
\begin{equation}\label{eqn:OptQ3}
A^\top_{\wh{\calI}^*, \wh{\calJ}^*}\bm{x}^*_{\wh{\calI}^*}=\mu^*\cdot\bm{e}^{|\wh{\calJ}^*|}.
\end{equation}
We compare $(\tilde{\bm{x}}^n, \tilde{\mu}^n)$ with $(\bm{x}^*, \mu^*)$ and when $n$ is large enough. In the remaining proof, we focus on comparing $(\tilde{\bm{x}}^n, \tilde{\mu}^n)$ with $(\bm{x}^*, \mu^*)$. 

Note that for $n\geq N_2+1$, $\tilde{\bm{x}}^n$ is the solution to the following linear equations
\begin{equation}\label{eqn:OptQ4}
\begin{bmatrix}
&\wh{A}^\top_{\wh{\calI}^*, \wh{\calJ}^*}(\mathcal{H}^n) & -\bm{e}^{|\wh{\calJ}^*|} \\
& (\bm{e}^{|\wh{\calI}^*|})^\top & 0
\end{bmatrix}
\begin{bmatrix}
\tilde{\bx}^n_{\wh{\calI}^*}\\
\tilde{\mu}^n
\end{bmatrix}=
\begin{bmatrix}
\bm{a}^n/(N-n+1)\\
 1
\end{bmatrix}.
\end{equation}
When $n\leq\tau$, we know that
\begin{equation}\label{eqn:020301}
\left\| \frac{\bm{a}^n}{N-n+1} \right\|_\infty\leq\eta.
\end{equation}
Now, we denote by $\mathbb{I}\{i_q =i, j_q = j\}$ an indicator function of whether $i_q = i$ and $j_q = j$, for $q=1,\dots,N$, and we denote by
\[
n_{i,j} \triangleq \sum_{q=1}^n \mathbb{I}\{i_q=i,j_q=j\},
\]
the number of samples of the matrix entry $A_{i,j}$. From the uniform sampling rule and standard Hoeffding bound, noting $\mathbb{E}[n_{i,j}]=(n-N_2)/d^2$, we know that for all $i\in\wh{\calI}^*$ and $j\in\wh{\calJ}^*$, 
\[
\Pr[n_{i,j}\geq (n-N_2)/(2d^2)]= 1-\Pr(n_{i,j}<(n-N_2)/(2d^2))\geq 1-\exp(-(n-N_2)/(2d^4)).
\]
We now condition on the event $\mathcal{E}_n'$ happens, which is defined as
\[
\mathcal{E}_n' = \left\{ n_{i,j}\geq (n-N_2)/(2d^2), \forall i\in\wh{\calI}^*, \forall j\in\wh{\calJ}^* \right\}.
\]
From the union bound, we know that 
\[
\Pr[\mathcal{E}_n']\geq 1-d^2\cdot\exp(-(n-N_2)/(2d^4))\geq 1-\eps,
\]
as long as $n$ satisfies the condition
\begin{equation}\label{eqn:061201}
d^2\cdot\exp(-(n-N_2)/(2d^4)) \leq \eps.
\end{equation}

Conditional on the event $\mathcal{E}_n'$ happens, we know that the absolute value of each element of $\wh{A}_{\wh{\calI}^*, \wh{\calJ}^*}(\mathcal{H}^n)-A_{\wh{\calI}^*, \wh{\calJ}^*}$ is upper bounded by $\Rad((n-N_2)/(2d^2), \eps/d^2)$, given that the following event
\begin{equation}\label{def:event2}
\mathcal{E}_n=\left\{\left|\frac{1}{n_{i,j}}\cdot\sum_{q=1}^{n_{i,j}} A_{q}(i,j)\cdot \mathbb{I}\{i_q=i,j_q=j\}-A_{i,j}\right|\leq \Rad((n-N_2)/(2d^2), \eps/d^2), ~~\forall i\in\wh{\calI}^*, j\in\wh{\calJ}^*  \right\}
\end{equation}
is assumed to happen (it holds with probability at least $1-\eps$ following standard Hoeffding bound). We now bound the distance between the solutions to \Cref{eqn:OptQ3} and \Cref{eqn:OptQ4}. The perturbation of the matrix is denoted as
\[
\Delta A=A_{\wh{\calI}^*, \wh{\calJ}^*} - \wh{A}_{\wh{\calI}^*, \wh{\calJ}^*}(\mathcal{H}^n).
\]
It holds that
\begin{equation}\label{eqn:020303}
\|\Delta A\|_2 \leq \sqrt{d}\cdot\Rad((n-N_2)/(2d^2), \eps/d^2).
\end{equation}
Therefore, as long as 
\begin{equation}\label{eqn:020304}
\|\Delta A\|_2 \leq \sqrt{d}\cdot\Rad((n-N_2)/(2d^2), \eps/d^2) \leq \frac{\|M(\wh{\calI}^*, \wh{\calJ}^*)\|_2}{2\kappa},
\end{equation}
{noting that $\left\| \frac{\bm{a}^n}{N-n+1} \right\|_\infty\leq\eta$ from \Cref{eqn:020301} and} following standard perturbation analysis of linear equations (Theorem 1 of \citep{higham2002accuracy}), we have that

\begin{equation}\label{eqn:020305}
\begin{aligned}
    \frac{\|(\tilde{\bm{x}}^n_{\wh{\calI}^*}, \tilde{\mu}^n)-(\bm{x}^*_{\wh{\calI}^*}, \mu^*)\|_2}{\|(\bm{x}^*_{\wh{\calI}^*}, \mu^*)\|_2}
    &\leq \frac{\kappa}{1-\kappa\cdot\frac{\|\Delta A\|_2}{\|M(\wh{\calI}^*, \wh{\calJ}^*)\|_2}}\cdot \left( \frac{\|\Delta A\|_2}{\|M(\wh{\calI}^*, \wh{\calJ}^*)\|_2}+\sqrt{d}\cdot\eta \right)\\
    &\leq 2\cdot\kappa\cdot\left( \frac{\|\Delta A\|_2}{\|M(\wh{\calI}^*, \wh{\calJ}^*)\|_2}+\sqrt{d}\cdot\eta \right),
\end{aligned}
\end{equation}
where 
$\kappa$ denotes the conditional number of $M(\wh{\calI}^*, \wh{\calJ}^*)$ defined in \Cref{eqn:true}. 
We now want to propose conditions on $\|\Delta A\|_2$ and $\eta$ such that the right hand side of \Cref{eqn:020305} is small enough such that we can show the $l_2$ norm of $(\tilde{\bm{x}}^n_{\wh{\calI}^*}, \tilde{\mu}^n)-(\bm{x}^*_{\wh{\calI}^*}, \mu^*)$ is at most $L/2$. Together with the upper bound that 
\[
\|(\bm{x}^*_{\wh{\calI}^*}, \mu^*)\|_2\leq\|(\bm{x}^*_{\wh{\calI}^*}, \mu^*)\|_1\leq L/2,
\]
our proof will be completed.

We note that $\|(\bm{x}^*_{\wh{\calI}^*}, \mu^*)\|_2\leq \|(\bm{x}^*_{\wh{\calI}^*}, \mu^*)\|_1\leq\frac{L}{2}$. Then, \Cref{eqn:020305} can be rewritten as
\[
\|(\tilde{\bm{x}}^n_{\wh{\calI}^*}, \tilde{\mu}^n)-(\bm{x}^*_{\wh{\calI}^*}, \mu^*)\|_2
\leq L\cdot\kappa\cdot\left( \frac{\|\Delta A\|_2}{\|M(\wh{\calI}^*, \wh{\calJ}^*)\|_2}+\sqrt{d}\cdot\eta \right).
\]
We now set the condition on $n$ such that $n$ satisfies the condition in \Cref{eqn:020304} and the following condition
\begin{equation}\label{eqn:020306}
 L\cdot\kappa\cdot \frac{\|\Delta A\|_2}{\|M(\wh{\calI}^*, \wh{\calJ}^*)\|_2}\leq L\cdot \sqrt{d}\cdot\frac{\kappa}{\|M(\wh{\calI}^*, \wh{\calJ}^*)\|_2}\cdot\Rad((n-N_2)/(2d^2), \eps/d^2)  \leq \frac{L}{4},
\end{equation}
we also set the condition on $\eta$ such that
\begin{equation}\label{eqn:020307}
2L\cdot\kappa\cdot \sqrt{d}\cdot\eta \leq\frac{L}{4}.
\end{equation}
Clearly, as long as conditions \Cref{eqn:020304}, \Cref{eqn:020306}, and \Cref{eqn:020307} are satisfied, we have that 
\[
\|(\tilde{\bm{x}}^n_{\wh{\calI}^*}, \tilde{\mu}^n)-(\bm{x}^*_{\wh{\calI}^*}, \mu^*)\|_2\leq  L\cdot\kappa\cdot\left( \frac{\|\Delta A\|_2}{\|M(\wh{\calI}^*, \wh{\calJ}^*)\|_2}+\sqrt{d}\cdot\eta \right)\leq L/2,
\]
which will complete our proof.

We now set the condition $n\geq N_2+ N_0'$ with $N_0'$ given by
\begin{equation}\label{eqn:022101}
 N'_0:= 32d^4\cdot\frac{\kappa^2}{\|M(\wh{\calI}^*, \wh{\calJ}^*)\|_2^2}\cdot\log(2d^2/\eps).
\end{equation}
Then, the conditions \Cref{eqn:061201}, \Cref{eqn:020304} and \Cref{eqn:020306} are satisfied. Moreover, we set 
\begin{equation}\label{eqn:022102}
\eta:= \frac{1}{8\sqrt{d}\cdot\kappa}.
\end{equation}
Then, the condition in \Cref{eqn:020307} is satisfied. Therefore, with the conditions in \Cref{eqn:022101} and \Cref{eqn:022102}, we know that
\[
\|(\tilde{\bm{x}}^n_{\wh{\calI}^*}, \tilde{\mu}^n)-(\bm{x}^*_{\wh{\calI}^*}, \mu^*)\|_2\leq L/2.
\]
Moreover, note that $\|(\bm{x}^*_{\wh{\calI}^*}, \mu^*)\|_2\leq\|(\bm{x}^*_{\wh{\calI}^*}, \mu^*)\|_1\leq L/2$, we have that
\[
\|(\tilde{\bm{x}}^n_{\wh{\calI}^*}, \tilde{\mu}^n)\|_2 \leq \|(\tilde{\bm{x}}^n_{\wh{\calI}^*}, \tilde{\mu}^n)-(\bm{x}^*_{\wh{\calI}^*}, \mu^*)\|_2 + \|(\bm{x}^*_{\wh{\calI}^*}, \mu^*)\|_2 \leq L.
\]
Our proof is completed by taking a union bound over the probability of the event $\mathcal{E}'_n$ and $\mathcal{E}_n$, which is lower bounded by $1-\eps$, for all $n$. To be specific, note that
\[
\mathcal{E}_0'\subset \cup_{n=N_0}^N\left( \mathcal{E}_n \cup\mathcal{E}_n' \right).
\]
From the union bound, we know that
\[
\Pr[\mathcal{E}_0']\geq 1-\sum_{n=N_2}^N\left( 1-\Pr[\mathcal{E}_n]+1-\Pr[\mathcal{E}_n'] \right)\geq 1-2N\eps.
\]
Our proof is thus completed.

\subsubsection{Proof of \Cref{lem:Stoptime}}\label{app:lemStoptime}
Now we fix an arbitrary $j\in\wh{\calJ}^*$. For any $N_2+N'_0\leq N'\leq N$, it holds that
\[
\tilde{a}_{j}^{N'}-\tilde{a}_{j}^{N_2+{N}'_0}=\sum_{n=N_2+{N}'_0}^{N'-1}(\tilde{a}_{j}^{n+1}-\tilde{a}_{j}^{n}).
\]
We define $\xi_{j}^{n}=\tilde{a}_{j}^{n+1}-\tilde{a}_{j}^{n}$. Then, we have
\[
\tilde{a}_{j}^{N'}-\tilde{a}_{j}^{N_2+{N}'_0}=\sum_{n=N_2+{N}'_0}^{N'-1}(\xi_{j}^{n}-\mathbb{E}[\xi_{j}^{n}|\mathcal{H}^n])+\sum_{n=N_2+{N}'_0}^{N'-1}\mathbb{E}[\xi_{j}^{n}|\mathcal{H}^n],
\]
where $\mathcal{H}^n$ denotes the filtration of information up to step $n$.
Note that due to the update in \Cref{eqn:Aveupalpha}, we have
\[
\xi_{j}^{n}=\frac{\tilde{a}_{j}^{n}-|\wh{\calJ}^*|\cdot|\wh{\calI}^*|\cdot \tilde{A}_{n,i_n,j_n}\cdot x^n_{i_n}\cdot 1_{\{j=j_n\}}+\mu^n}{N-n},
\]
where $1_{\{j=j_n\}}$ is an indicator function of whether $j=j_n$.
Then, it holds that
\begin{equation}\label{eqn:011901}
|\xi_{j}^{n}-\mathbb{E}[\xi_{j}^{n}|\mathcal{H}^n]|\leq \frac{d^2}{N-n}
\end{equation}
where the inequality follows from the fact that the value of $\tilde{a}_j^{n}$ is deterministic given the filtration $\mathcal{H}^n$ and we have $\|(\bm{x}^n, \mu^n)\|_\infty\leq\|(\bm{x}^n, \mu^n)\|_2\leq L$ for any $n$, as well as $|\wh{\calJ}^*|=|\wh{\calI}^*|=d$.
Note that
\[
\{\xi_{j}^{n}-\mathbb{E}[\xi_{j}^{n}|\mathcal{H}^n]\}_{\forall n=N_2+N'_0,\dots,N'}
\]
forms a martingale difference sequence. Following Hoeffding's inequality, for any $N''\leq N'$ and any $b>0$, it holds that
\[\begin{aligned}
\Pr\left[ \left| \sum_{n=N_2+{N}'_0}^{N''}(\xi_{j}^{n}-\mathbb{E}[\xi_{j}^{n}|\mathcal{H}^n]) \right|\geq b \right]&\leq 2\exp\left( -\frac{b^2}{2\cdot\sum_{n=N_2+{N}'_0}^{N''}d^4/(N-n)^2 } \right)\\
&\leq 2\exp\left( -\frac{b^2\cdot(N-N'')}{d^4} \right).
\end{aligned}\]
Therefore, we have that
\begin{equation}\label{eqn:011902}
\begin{aligned}
&\Pr\left[ \left| \sum_{n=N_2+{N}'_0}^{N''}(\xi_{j}^{n}-\mathbb{E}[\xi_{j}^{n}|\mathcal{H}^n]) \right|\geq b \text{~for~some~}N_2+{N}'_0\leq N''\leq N' \right]\\
\leq &\sum_{N''=N_2+{N}'_0}^{N'} 2\exp\left( -\frac{b^2\cdot(N-N'')}{2d^4} \right)\leq \frac{b^2}{d^4}\cdot \exp\left( -\frac{b^2\cdot(N-N')}{2d^4 } \right)
\end{aligned}
\end{equation}
holds for any $b>0$.

We now bound the probability that $\tau>N'$ for one particular $N'$ such that $N_2+{N}'_0\leq N'\leq N$. Suppose that $N'\leq \tau$, then,  from \Cref{lem:projection}, for each $n\leq N'$, we know that $\|(\tilde{\bm{x}}^n, \tilde{\mu}^n)\|_2\leq L$ 
and therefore $\bm{x}^n=\tilde{\bm{x}}^n$, $\mu^n=\tilde{\mu}^n$ as the solution to \Cref{eqn:OptQ2}. We have
\[
\tilde{a}_{j}^{n}=\wh{A}^\top_{\wh{\calI}^*, j}(\mathcal{H}^n) \bm{x}^n_{\wh{\calI}^*}-\mu^n.
\]
Now, we denote by $\mathbb{I}\{i_q =i, j_q = j\}$ an indicator function of whether $i_q = i$ and $j_q = j$, and we denote by
\[
n_{i,j} \triangleq \sum_{q=1}^n \mathbb{I}\{i_q =i, j_q = j\},
\]
the number of noisy samples for $A_{i,j}$ till iteration $n$. From the uniform sampling rule and standard Hoeffding bound, noting $\mathbb{E}[n_{i,j}]\geq (n-N_2)/d^2$, we know that
\[
\Pr[n_{i,j}\geq (n-N_2)/(2d^2)]= 1-\Pr[n_{i,j}<(n-N_2)/(2d^2)]\geq 1-\exp(-(n-N_2)/(2d^4)).
\]
We now condition on the event $\mathcal{E}_n'$ happens, which is defined as
\[
\mathcal{E}_n' = \left\{ n_{i,j}\geq (n-N_2)/(2d^2), \forall i\in\wh{\calI}^*, \forall j\in\wh{\calJ}^* \right\}.
\]
From the Hoeffding bound, we know that 
\[
\Pr[\mathcal{E}_n']\geq 1-d^2\cdot\exp(-(n-N_2)/(2d^4)).
\]
We now set a $N_1'$ such that the above probability bound is small enough when $n\geq N_2+N_1'$. To be specific, $N_1'$ is set as
\begin{equation}\label{eqn:n1}
    N_1' = 2d^4\cdot\log(4d^6/\eps).
\end{equation}
Then, denote by $\mathcal{E'}$ the event such that
\[
\mathcal{E}' = \{ \cap_{n\geq N_2+N_1'}\mathcal{E}'_n \}.
\]
From the union bound, we have
\[
\Pr[\mathcal{E}']\geq 1- \sum_{n=N_2+N_1'}^N d^2\cdot\exp(-(n-N_2)/(2d^4))\geq 1- 2d^6\cdot \exp(-N_1'/(2d^4))=1-\eps/2.
\]
Conditional on the event $\mathcal{E}'$ happens, we know that the absolute value of each element of $\wh{A}_{\wh{\calI}^*, \wh{\calJ}^*}(\mathcal{H}^n)-A_{\wh{\calI}^*, \wh{\calJ}^*}$ is upper bounded by $\Rad((n-N_2)/(2d^2), \eps/d^2)$, given that the following event
\begin{equation}\label{def:event21}
\mathcal{E}_n=\left\{\left|\frac{1}{n_{i,j}}\cdot\sum_{q=1}^{N} \wt{A}_{q, i_q,j_q}\cdot \mathbb{I}\{i_q=i,j_q=j\}-A_{i,j}\right|\leq \Rad((n-N_2)/(2d^2), \eps/(2d^2)), ~~\forall i\in\wh{\calI}^*, j\in\wh{\calJ}^*  \right\}
\end{equation}
is assumed to happen (it holds with probability at least $1-\eps/2$ following standard Hoeffding bound). Therefore, from the union bound, we know that both event $\mathcal{E}'$ and event $\mathcal{E}_n$ happen with probability at least $1-\eps$.

Conditional on the events $\mathcal{E}'$ and $\mathcal{E}_n$ happen, for $n\geq N_2+\max\{N_0',  N_1'\} = N_2+N_0'$,
it holds that
\begin{equation}\label{eqn:011903}
\left|\mathbb{E}_{i_n, j_n}[\xi_{j}^n|\mathcal{H}^n]\right|\leq\frac{1}{N-n}\cdot \left\|\left(\wh{A}_{\wh{\calI}^*, j}(\mathcal{H}^n) - A_{\wh{\calI}^*, j}\right)\cdot\bx^n_{\wh{\calI}^*}\right\|_{\infty} \leq\frac{ \Rad((n-N_2)/(2d^2),\eps/(2d^2))}{N-n}.
\end{equation}
Then, we know that
\begin{equation}\label{eqn:011904}
\begin{aligned}
\frac{\sum_{n=N_2+{N}'_0}^{N'-1}\left|\mathbb{E}[\xi_{j}^{n}|\mathcal{H}^n]\right|}{\sqrt{2}\cdot d}&\leq\sqrt{\frac{\log(4d^2/\eps)}{2}}\cdot\sum_{n=N_2+{N}'_0}^{N'-1}\frac{1}{\sqrt{n}\cdot(N-n)}\\
&\leq \sqrt{\frac{\log(4d^2/\eps)}{2}}\cdot \sqrt{N'-1}\cdot \sum_{n=N_2+{N}'_0}^{N'-1}\frac{1}{n\cdot(N-n)}\\
&=\sqrt{\frac{\log(4d^2/\eps)}{2}}\cdot \frac{\sqrt{N'-1}}{N}\cdot \sum_{n=N_2+{N}'_0}^{N'-1}\left( \frac{1}{n}+\frac{1}{N-n} \right)\\
&\leq \sqrt{2\log(4d^2/\eps)}\cdot \frac{\sqrt{N'-1}}{N}\cdot \log(N)
\leq \frac{\sqrt{2\log(4d^2/\eps)}}{\sqrt{N}}\cdot\log(N)\\
&\leq\frac{\eta}{2}
\end{aligned}
\end{equation}
for a $N$ large enough such that
\begin{equation}\label{eqn:022103}
N\geq \frac{16\log(4d^2/\eps)}{\eta^2}=1024 d\cdot\kappa^2\cdot\log(4d^2/\eps).
\end{equation}
We are now ready to bound the probability for $\tau\leq N'$. For any $N''$ such that $N_2+N_0'\leq N''\leq N'-1$, we have
\[
\tilde{a}_{j}^{N''}-\tilde{a}_{j}^{N_2+{N}'_0}=\sum_{n=N_2+{N}'_0}^{N''-1}(\xi_{j}^{n}-\mathbb{E}[\xi_{j}^{n}|\mathcal{H}^n])+\sum_{n=N_2+{N}'_0}^{N''-1}\mathbb{E}[\xi_{j}^{n}|\mathcal{H}^n].
\]
Then, $\tau\leq N'$ implies that $\left| \sum_{n=N_2+{N}'_0}^{N''-1}(\xi_{j}^{n}-\mathbb{E}[\xi_{j}^{n}|\mathcal{H}^n]) \right|\geq\frac{\eta}{2}$ or $\left|\sum_{n=N_2+{N}'_0}^{N''-1}\mathbb{E}[\xi_{j}^{n}|\mathcal{H}^n] \right|\geq\frac{\eta}{2}$ for some $N''$ such that $N_2+N_0'\leq N''\leq N'-1$. From \Cref{eqn:011904} and \Cref{eqn:011902} with $b=\eta/2$, and applying a union bound over all $j\in\wh{\calJ}^*$, as well as event $\mathcal{E}$ and $\mathcal{E}'$, we know that
\begin{equation}\label{eqn:011905}
\Pr[\tau\leq N']\leq \frac{\eta^2}{4d^4}\cdot \exp\left( -\frac{\eta^2\cdot(N-N')}{8d^4 } \right)+(N-N_2)\cdot\eps,
\end{equation}
where the $N\cdot\eps$ terms comes from a union bound over all the events $\mathcal{E}'$ and $\mathcal{E}_n$ for all $n$.
Therefore, noting $\tau$ is defined to be greater than $N_2+1$, we know that
\[
\mathbb{E}[N-\tau]=\sum_{N'=N_2+1}^N \Pr[\tau \leq N']\leq {N}'_0+\sum_{N'=N_2+N'_0}^N \Pr[\tau \leq N']\leq {N}'_0+
2\cdot \exp(-\eta^2/(8d^4))+(N-N_2)^2\cdot\eps,
\]
which completes our proof.

\subsection{Proof of \Cref{thm:sampleComplexity}}

Note that when $N_1\geq\frac{2m\cdot\log(16m/\veps)}{\delta^2}$, from \Cref{prop:DoublingTrick}, with a probability at least $1-\veps/4$, we know that the final output $\wh{\calI}^*$ and $\wh{\calJ}^*$ of \Cref{alg:Idenbasis} in the execution of \Cref{alg:Twophase} is an optimal basis, i.e., $\wh{\calI}^*=\calI^*$ and $\wh{\calJ}^*=\calJ^*$, which we denote by event $\mathcal{E}_1$. Also, from \Cref{thm:BoundN1}, with a probability at least $1-\veps/12$, we know that our estimation $\sigma'$ approximates $\sigma$ well in that $\sigma'/2\leq\sigma\leq2\sigma'$, which we denote by event $\mathcal{E}_2$. We now denote by $\mathcal{E}$ the good event that 
\[
\mathcal{E} = \mathcal{E}_1\cap \mathcal{E}_2.
\]
Taking the union bound, we have that 
\[
P(\mathcal{E}) \geq 1-\veps/3.
\]
Further combined with \Cref{lem:resolving}, we know that conditional on the event $\mathcal{E}$ happens, we have that 
\[
\argmin_{\bx^*\in\calX^*}\|\E[\bar{\bx}]-\bx^*\|_2\leq \veps/(2d^*),
\]
since $\wh{\calI}^*$ and $\wh{\calJ}^*$ become an optimal basis and $\bx(\wh{\calI}^*, \wh{\calJ}^*)$ is an optimal solution to $\VI$. 
As a result, we conclude that with a probability at least $1-\veps/3$, it holds that
\[
\argmin_{\bx^*\in\calX^*}\|\E[\bar{\bx}]-\bx^*\|_2\leq \veps/(2d^*).
\]
We now consider the bad case that the event $\mathcal{E}$ does not happen, which happens with a probability at most $\veps/3$. In this case, since we have $\mathbb{E}[\bar{\bx}]\in\Delta_{m_1}$ and $\bx^*\in\Delta_{m_1}$ for an arbitrary $\bx^*\in\mathcal{X}^*$, we know that 
\[
\|\mathbb{E}[\bar{\bx}]-\bx^*\|_2 \leq \sqrt{\|\mathbb{E}[\bar{\bx}]\|_2^2+\|\bx^*\|^2_2} \leq \sqrt{2}.
\]
Therefore, we have that
\[
\argmin_{\bx^*\in\calX^*}\|\E[\bar{\bx}]-\bx^*\|_2\leq \veps/2 + \sqrt{2}\cdot\veps/3 \leq\veps.
\]
In order to bound the total number of samples, we simply use the bound on $N_2$ in \Cref{prop:DoublingTrick} and use the definition of $N$ in line 6 of \Cref{alg:Twophase}, we get that 
\[
N= \order \left( \left(\frac{m}{\sigma_0^2}+ \frac{m}{\delta^2}\right)\cdot\log(m/\veps) + \frac{(d^*)^{15/2}}{\sigma^3}\cdot\frac{\log(m/\veps)}{\veps} \right).
\]
Our proof is thus completed.
\subsection{Proof of \Cref{thm:sample-complexity-independent}}\label{app:sample-complexity-independent}
We now present the worst-case bound on the suboptimality of \Cref{alg:Idenbasis} and \Cref{alg:Twophase} with a finite sample size. We let $N'$ denote its final value when calling \Cref{alg:Idenbasis} in the iteration of \Cref{alg:Twophase}. 
Denote by $\calI^{N'}$ and $\calJ^{N'}$ the output of \Cref{alg:Idenbasis}, we know from \Cref{thm:FiniteGap} and \Cref{prop:DoublingTrick} that with a probability at least $1-\veps/4$, the solution $\bx^*(\calI^{N'}, \calJ^{N'})$ forms a feasible solution to $\VI$ (by properly defining variable $\mu$) with an sub-optimality gap bounded by 
\[
\Rad(N'/m, \eps/m) + 4(d')^2\wh{\kappa}\cdot\Rad(N'/m, \eps/m),
\]
where $\eps=\veps/(8K^2)$ with $K$ denotes the number of times that \Cref{alg:Idenbasis} is called in the iteration in line 3 and line 4 of \Cref{alg:Twophase}, $\hat{\sigma}_{\min}$ and $\hat{\sigma}_{\max}$ are the smallest and the largest absolute value of the singular values of the matrix 
\[
\begin{bmatrix}
\wh{A}^\top_{\calI^{N'}, \calJ^{N'}} & -\bm{e}^{|\calJ^{N'}|} \\
(\bm{e}^{|\calI^{N'}|})^\top & 0
\end{bmatrix},
\]
and we denote $d'=|\calI^{N'}|=|\calJ^{N'}|$.
We now further denote by $\bar{\bx}$ the output of \Cref{alg:Twophase} and we compare $\bar{\bx}$ with $\bx^*(\calI^{N'}, \calJ^{N'})$. Note that the procedure of \Cref{alg:Twophase} is used to approximate the solution of the linear equations 
\begin{equation}\label{eqn:012901}
\begin{bmatrix}
A^\top_{\calI^{N'}, \calJ^{N'}} & -\bm{e}^{|\calJ^{N'}|} \\
 (\bm{e}^{|\calI^{N'}|})^\top & 0
\end{bmatrix}
\begin{bmatrix}
\bx_{\calI^{N'}}^*(\calI^{N'}, \calJ^{N'})\\
\mu^*(\calI^{N'}, \calJ^{N'})
\end{bmatrix}=
\begin{bmatrix}
\bm{0}\\
1
\end{bmatrix},
\end{equation}
which is exactly $\bx^*(\calI^{N'}, \calJ^{N'})$. Further note that from \Cref{lem:resolving}, we know that for any pair of $\calI^{N'}$ and $\calJ^{N'}$, it is guaranteed that
\begin{equation}\label{eqn:012903}
\left\|\bm{x}^*(\calI^{N'}, \calJ^{N'})-\mathbb{E}[\bar{\bm{x}}]\right\|_2\leq \veps/(2d'),
\end{equation}
We further note that
\[\begin{aligned}
&\left\|A^\top\bx^*(\calI^{N'}, \calJ^{N'})-A^\top\mathbb{E}[\bar{\bx}]\right\|_{\infty}=\max_{j\in[m_2]}\left\{ \left|A_{:, j}^\top(\bx^*(\calI^{N'}, \calJ^{N'})-\mathbb{E}[\bar{\bx}])\right| \right\}\\
&= \max_{j\in[m_2]}\left\{ \left|A_{\mathcal{I}^{N'}, j}^\top(\bx_{\mathcal{I}^{N'}}^*(\calI^{N'}, \calJ^{N'})-\mathbb{E}[\bar{\bx}_{\mathcal{I}^{N'}}])\right| \right\} \leq d'\cdot\|\bx^*(\calI^{N'}, \calJ^{N'})-\mathbb{E}[\bar{\bx}]\|_2.
\end{aligned}\]
Therefore, we know that $\mathbb{E}[\bar{\bx}]$ is a feasible solution to $\VI$ with an optimality gap bounded by 
\begin{equation}\label{eqn:012905new}
\Rad(N'/m, \eps/m) + 4(d')^2\wh{\kappa}\cdot\Rad(N'/m, \eps/m) + \veps/2.
\end{equation}
It only remains to prove that the final value of $N'$ guarantees that the first part of \Cref{eqn:012905new} satisfies that
\[
\Rad(N'/m, \eps/m) + 4(d')^2\wh{\kappa}\cdot\Rad(N'/m, \eps/m)=\order(\veps),
\]
with $\eps=\veps/(8K^2)$ with $K$ denotes the number of times that \Cref{alg:Idenbasis} is called in the iteration in line 3 and line 4 of \Cref{alg:Twophase}.

We now bound the value of the parameter $\wh{\kappa}$. Consider the last iteration of calling \Cref{alg:Idenbasis} in the execution of \Cref{alg:Twophase}. We define the event $\mathcal{E}$ by
\[
\mathcal{E}=\left\{\left| \wh{A}_{i,j}-A_{i,j}\right|\leq \Rad(N'/m, \eps/m), ~~\forall i\in[m_1], j\in[m_2]  \right\}.
\]
From standard Hoeffding inequality, we know that
\[
\Pr[\mathcal{E}]\geq1-\eps.
\]
We also denote by 
\[
\Delta A = \wh{A}_{\mathcal{I}^{N'}, \mathcal{J}^{N'}} - A_{\mathcal{I}^{N'}, \mathcal{J}^{N'}}.
\]
Then following Theorem 1 of \cite{stewart1998perturbation}, we know that the difference of the corresponding singular values of $
\begin{bmatrix}
A^\top_{\calI^{N'}, \calJ^{N'}} & -\bm{e}^{|\calJ^{N'}|} \\
(\bm{e}^{|\calI^{N'}|})^\top & 0
\end{bmatrix}$ and $
\begin{bmatrix}
\wh{A}^\top_{\calI^{N'}, \calJ^{N'}} & -\bm{e}^{|\calJ^{N'}|} \\
(\bm{e}^{|\calI^{N'}|})^\top & 0
\end{bmatrix}$ (sorted from the largest to the smallest) is upper bounded by $\|\Delta A\|_2$, which is again upper bounded by $d'\cdot\Rad(N'/m, \eps/m)$ with a probability at least $1-\eps$. Formally, it holds that
\[
    \Pr\left[ \left| \sigma_1-\sigma_1' \right| \geq d'\cdot\Rad(N'/m, \eps/m) \right] \leq \eps,
\]
for any two corresponding singular values of $
\begin{bmatrix}
A^\top_{\calI^{N'}, \calJ^{N'}} & -\bm{e}^{|\calJ^{N'}|} \\
(\bm{e}^{|\calI^{N'}|})^\top & 0
\end{bmatrix}$ and $
\begin{bmatrix}
\wh{A}^\top_{\calI^{N'}, \calJ^{N'}} & -\bm{e}^{|\calJ^{N'}|} \\
(\bm{e}^{|\calI^{N'}|})^\top & 0
\end{bmatrix}$, denoted by $\sigma_1$ and $\sigma_1'$. 

As a result, when 
\[
d'\cdot\Rad(N'/m, \eps/m) \leq \wh{\sigma}_{\min}/2,
\]
where we denote by $\wh{\sigma}_{\min}>0$ the smallest singular value of $
\begin{bmatrix}
\wh{A}^\top_{\calI^{N'}, \calJ^{N'}} & -\bm{e}^{|\calJ^{N'}|} \\
(\bm{e}^{|\calI^{N'}|})^\top & 0
\end{bmatrix}$, we know that conditional on the event $\mathcal{E}$ happens (which happens with probability at least $1-\eps$), we have 
\[
\frac{\sigma_1'}{2}\leq \sigma_1\leq \frac{3\sigma_1'}{2}
\]
for any two corresponding singular values, $\sigma_1$ and $\sigma_1'$, of $
\begin{bmatrix}
A^\top_{\calI^{N'}, \calJ^{N'}} & -\bm{e}^{|\calJ^{N'}|} \\
(\bm{e}^{|\calI^{N'}|})^\top & 0
\end{bmatrix}$ and $
\begin{bmatrix}
\wh{A}^\top_{\calI^{N'}, \calJ^{N'}} & -\bm{e}^{|\calJ^{N'}|} \\
(\bm{e}^{|\calI^{N'}|})^\top & 0
\end{bmatrix}$.
Therefore, denoting by $\kappa'$ the conditional number of $
\begin{bmatrix}
A^\top_{\calI^{N'}, \calJ^{N'}} & -\bm{e}^{|\calJ^{N'}|} \\
(\bm{e}^{|\calI^{N'}|})^\top & 0
\end{bmatrix}$, we know that when
\begin{equation}\label{eqn:061801}
d'\cdot\Rad(N'/m, \eps/m) \leq  \wh{\sigma}_{\min}/2,
\end{equation}
it holds that $\sigma'>0$ with $\sigma'$ denotes the smallest singular value of $
\begin{bmatrix}
A^\top_{\calI^{N'}, \calJ^{N'}} & -\bm{e}^{|\calJ^{N'}|} \\
(\bm{e}^{|\calI^{N'}|})^\top & 0
\end{bmatrix}$, and
\[
    \Pr\left[  \frac{\wh{\kappa}}{4}\leq\kappa'\leq \frac{9\wh{\kappa}}{4}  \right] \geq 1 - \eps.
\]
Note that the condition in \Cref{eqn:061801} is automatically guaranteed by line 14 of \Cref{alg:Idenbasis}. Therefore, we conclude that condition on the event $\mathcal{E}$ happens, we have that $\sigma'>0$ and 
\[
\frac{\wh{\kappa}}{4}\leq\kappa'\leq \frac{9\wh{\kappa}}{4}.
\]
Plugging in the bound of $\wh{\kappa}$ using $\kappa'$, we know that it remains to prove that 
\[
\Rad(N'/m, \eps/m) + 16(d')^2\kappa'\cdot\Rad(N'/m, \eps/m)= \order(\veps),
\]
which is further implied by
\begin{equation}\label{eqn:090301}
\Rad(N'/m, \eps/m) + \frac{16(d')^3}{\sigma'}\cdot\Rad(N'/m, \eps/m)= \order(\veps),
\end{equation}
since we upper bound $\kappa'$ by $d'/\sigma'$.

We then bound $\eps=\veps/(8K^2)$ by upper bounding $K$ which denotes the number of times that \Cref{alg:Idenbasis} is called in the iteration in line 3 and line 4 of \Cref{alg:Twophase}. Note that every time calling \Cref{alg:Idenbasis}, we exponentially increase the sample size, which implies that $K=\lceil\log_2(N_2)\rceil$. The bound on $N_2$ is presented in \Cref{prop:DoublingTrick}. Therefore, we know that 
\begin{equation}\label{eqn:090302}
\begin{aligned}
&\Rad(N'/m, \eps/m) + \frac{16(d')^3}{\sigma'}\cdot\Rad(N'/m, \eps/m)= \left(1+\frac{16(d')^3}{\sigma'}\right)\cdot\sqrt{\frac{m\cdot\log(16mK^2/\veps)}{2N'}}\\
\leq& \left(1+\frac{16(d')^3}{\sigma'}\right)\cdot \sqrt{\frac{m}{2N'}\cdot\left(\log(16m/\veps)+2\log\log(m/\sigma_0^2)\right)}.
\end{aligned}
\end{equation}
Now since we have that $N'\geq N_1 = \frac{\alpha\cdot m}{\veps^2}$, where $\alpha$ is a pre-given parameter, it holds that
\begin{equation}\label{eqn:090303}
\Rad(N'/m, \eps/m) + \frac{16(d')^3}{\sigma'}\cdot\Rad(N'/m, \eps/m)\leq \veps\cdot \left(1+\frac{16(d')^3}{\sigma'}\right)\cdot \sqrt{\frac{1}{2\alpha}\cdot\left(\log(16m/\veps)+2\log\log(m/\sigma_0^2)\right)}.
\end{equation}
Combining \Cref{eqn:090303} with \Cref{eqn:012905new}, we know that $\mathbb{E}[\bar{\bx}]$ is a feasible solution to $\VI$ with an optimality gap bounded by 
\[
\veps\cdot \left(\frac{1}{2}+\left(1+\frac{16(d')^3}{\sigma'}\right)\cdot \sqrt{\frac{1}{2\alpha}\cdot\left(\log(16m/\veps)+2\log\log(m/\sigma_0^2)\right)}\right).
\]
For the total number of samples, note that we apply the bound in \Cref{prop:DoublingTrick} to bound $N_2$, i.e., 
\[
N_2 = \frac{\alpha\cdot m}{\veps^2} + \order\left(\frac{m}{\sigma_0^2}\cdot\log(m/\veps)\right)
\]
Further combined with the expression of $N$ in line 6 of \Cref{alg:Twophase}, we conclude that 
\[
N = \order\left( \frac{\alpha\cdot m}{\veps^2} + \frac{m}{\sigma_0^2}\cdot\log(m/\veps)+ \frac{d^{15/2}}{(\sigma')^3}\cdot\frac{\log(m/\veps)}{\veps}\right).
\]
We further upper bound the algorithm-dependent quantity $d$ by the algorithm-independent quantity $d_0$, and lower bound the algorithm-dependent quantity $\sigma'$ by the algorithm-independent quantity $\sigma_0$. We have that the total sample complexity can be upper bounded as
\[
N = \order\left( \frac{\alpha\cdot m}{\veps^2} + \frac{m}{\sigma_0^2}\cdot\log(m/\veps)+ \frac{d_0^{15/2}}{\sigma_0^3}\cdot\frac{\log(m/\veps)}{\veps}\right).
\]
Our proof is thus completed.

\section{Missing Proofs in \Cref{sec:Combine}}

\subsection{Proof of \Cref{thm:BestBoth}}
We first prove the $\delta$-independent bound. Note that from \Cref{thm:sample-complexity-independent}, we know that $\mathbb{E}[\bar{\bx}]$ forms a feasible solution to $\VI$ with an sub-optimality gap bounded by
\[
\veps\cdot \left(\frac{1}{2}+\left(1+\frac{16(d')^3}{\sigma'}\right)\cdot \sqrt{\frac{1}{2\alpha}\cdot\left(\log(16m/\veps)+2\log\log(m/\sigma_0^2)\right)}\right),
\]
with a sample complexity upper bounded by
 \begin{align}\label{eqn:sample_complexity_delta_ind1}
N = \order\left( \frac{\alpha\cdot m}{\veps^2} + \frac{m}{\sigma_0^2}\cdot\log(m/\veps)+ \frac{(d')^{15/2}}{(\sigma')^3}\cdot\frac{\log(m/\veps)}{\veps}\right).
\end{align}
Suppose the parameter $\alpha$ is appropriately selected such that 
\[
\left(1+\frac{16(d')^3}{\sigma'}\right)\cdot \sqrt{\frac{1}{2\alpha}\cdot\left(\log(16m/\veps)+2\log\log(m/\sigma_0^2)\right)}\leq\frac{1}{2},
\]
we know that the suboptimality gap becomes $\veps$ and the sample complexity bound becomes 
\[
N = \order\left(\frac{d_0^{15/2}}{\sigma_0^3}\cdot\frac{\log(m/\veps)}{\veps} + \frac{d_0^6\cdot m}{\sigma_0^2\cdot\veps^2}\right).
\]
We then prove the $\delta$-dependent bound. Conditional on $\wh{\calI}^*=\calI^*$ and $\wh{\calJ}^*=\calJ^*$, we have
\[
\|A^\top\bx^*-A^\top\mathbb{E}[\bar{\bx}]\|_{\infty}=\max_{j\in[m_2]}\left\{ \left|A_{:, j}^\top(\bx^*-\mathbb{E}[\bar{\bx}])\right| \right\}= \max_{j\in[m_2]}\left\{ \left|A_{\mathcal{I}^*, j}^\top(\bx_{\mathcal{I}^*}^*-\mathbb{E}[\bar{\bx}_{\mathcal{I}^*}])\right| \right\} \leq d^*\cdot\|\bx^*-\mathbb{E}[\bar{\bx}]\|_2.
\]
Then, we know that $\mathbb{E}[\bar{\bx}]$ is a feasible solution to $\VI$ with an optimality gap bounded by (follows from \Cref{eqn:011512}) 
\begin{equation}\label{eqn:0129071}
\order\left((d^*)^{5/2}\kappa\cdot\left( \frac{(d^*)^5}{\sigma^2}+
1\right)\cdot\frac{\log(N-N_2)}{N-N_2}\right).
\end{equation}
Therefore, we set the quantity in \Cref{eqn:0129071} to be $\veps$ and toghther with the bound on $N_2$ presented in \Cref{prop:DoublingTrick}, we obtain the sample complexity bound
\[
N=\order\left(\left(\frac{m}{\sigma_0^2}+ \frac{m}{\delta^2}\right)\cdot\log(m/\veps) + \frac{(d^*)^{15/2}}{\sigma^3}\cdot\frac{\log(m/\veps)}{\veps} \right).
\]
Our proof is thus completed.

\subsection{Proof of \Cref{coro:BestBoth}}\label{app:thm10}
We classify into the following two scenarios:\\
i) The $\delta$-dependent bound is smaller. We consider the case where $\delta$ is large enough such that the $\delta$-dependent bound is smaller than the $\delta$-independent bound, i.e., 
\[
\order\left( \frac{d_0^{15/2}}{\sigma_0^3}\cdot\frac{\log(m/\veps)}{\veps} + \frac{d_0^6\cdot m}{\sigma_0^2\cdot\veps^2} \right)\geq \order\left( \left(\frac{m}{\sigma_0^2}+ \frac{m}{\delta^2}\right)\cdot\log(m/\veps) + \frac{(d^*)^{15/2}}{\sigma^3}\cdot\frac{\log(m/\veps)}{\veps} \right).
\]
In this case, as long as the input sample size $N_1$ satisfies the condition that $N_1=\Theta\left( \frac{m\cdot\log(m/\veps)}{\delta^2} \right)$, the $\delta$-dependent guarantee is achievable (implied by \Cref{alg:Idenbasis} finds the optimal basis with a high probability following \Cref{prop:DoublingTrick}). Then, in this case, following \Cref{thm:sampleComplexity} and \Cref{thm:BestBoth}, we know that a $\veps$-suboptimality gap NE can be obtained by our \Cref{alg:Idenbasis} and \Cref{alg:Twophase} with a sample complexity bound of 
\[\begin{aligned}
&\order\left( \left(\frac{m}{\sigma_0^2}+ \frac{m}{\delta^2}\right)\cdot\log(m/\veps) + \frac{(d^*)^{15/2}}{\sigma^3}\cdot\frac{\log(m/\veps)}{\veps} \right)\\
= &\order\left(\min\left\{ \frac{d_0^{15/2}}{\sigma_0^3}\cdot\frac{\log(m/\veps)}{\veps} + \frac{d_0^6\cdot m}{\sigma_0^2\cdot\veps^2},  \left(\frac{m}{\sigma_0^2}+ \frac{m}{\delta^2}\right)\cdot\log(m/\veps) + \frac{(d^*)^{15/2}}{\sigma^3}\cdot\frac{\log(m/\veps)}{\veps} \right\}\right)
\end{aligned}\]
ii) The $\delta$-independent bound is smaller. We consider the case where $\delta$ is small enough such that the $\delta$-independent bound is smaller than the $\delta$-dependent bound, i.e.,
\[
\order\left( \frac{d_0^{15/2}}{\sigma_0^3}\cdot\frac{\log(m/\veps)}{\veps} + \frac{d_0^6\cdot m}{\sigma_0^2\cdot\veps^2}\right)\leq \order\left( \left(\frac{m}{\sigma_0^2}+ \frac{m}{\delta^2}\right)\cdot\log(m/\veps) + \frac{(d^*)^{15/2}}{\sigma^3}\cdot\frac{\log(m/\veps)}{\veps} \right)
\]
In this case, we can set the input sample size $N_1=\frac{\alpha\cdot m}{\eps^2}$ to \Cref{alg:Idenbasis}. Following \Cref{thm:sample-complexity-independent}, we know that as long as we select the parameter $\alpha$ such that 
\[
\left(1+\frac{16d_0^3}{\sigma_0}\right)\cdot \sqrt{\frac{1}{2\alpha}\cdot\left(\log(16m/\veps)+2\log\log(m/\sigma_0^2)\right)}\leq\frac{1}{2},
\]
we have that $\mathbb{E}[\bar{\bx}]$ forms a feasible solution to $\VI$ with an sub-optimality gap bounded by $\veps$. Therefore, from \Cref{thm:sample-complexity-independent} and \Cref{thm:BestBoth}, we know that a $\veps$-suboptimality gap NE can be obtained by our \Cref{alg:Idenbasis} and \Cref{alg:Twophase} with a sample complexity bound of 
\begin{equation}\label{eqn:061802}
\begin{aligned}
&\order\left( \frac{d_0^{15/2}}{\sigma_0^3}\cdot\frac{\log(m/\veps)}{\veps} + \frac{d_0^6\cdot m}{\sigma_0^2\cdot\veps^2} \right)\\
=&\order\left(\min\left\{ \frac{d_0^{15/2}}{\sigma_0^3}\cdot\frac{\log(m/\veps)}{\veps} + \frac{d_0^6\cdot m}{\sigma_0^2\cdot\veps^2},  \left(\frac{m}{\sigma_0^2}+ \frac{m}{\delta^2}\right)\cdot\log(m/\veps) + \frac{(d^*)^{15/2}}{\sigma^3}\cdot\frac{\log(m/\veps)}{\veps} \right\}\right)
\end{aligned}
\end{equation}
Therefore, we conclude that there exists a choice of $N_1$ to serve as input such that \Cref{alg:Twophase} guarantees that $\mathbb{E}[\bar{\bx}]$ is a $\veps$-sub-optimality-gap with $N$ bounded by
\[
N = \order\left(\min\left\{\frac{d_0^{15/2}}{\sigma_0^3}\cdot\frac{\log(m/\veps)}{\veps} + \frac{d_0^6\cdot m}{\sigma_0^2\cdot\veps^2}, \left(\frac{m}{\sigma_0^2}+ \frac{m}{\delta^2}\right)\cdot\log(m/\veps) + \frac{(d^*)^{15/2}}{\sigma^3}\cdot\frac{\log(m/\veps)}{\veps}\right\}\right).
\]
Our proof is thus completed.

\subsection{Proof of \Cref{thm:EstimateDelta}}
Note that following standard Hoeffding inequality, for any $\eps>0$, we know that the gap between each element of the matrix $A$ and $\wh{A}$ is upper bounded by $\Rad(n/m, \eps/(2n^2m))$ with a probability at least $1-\eps/(2n^2)$, where we denote this event as $\mathcal{E}_n$. We now denote by 
\[
\mathcal{E}=\cap_{n=1}^{\infty}\mathcal{E}_n
\]
and from the union bound, we have that
\[
\Pr[\mathcal{E}] \geq 1-\sum_{n=1}^{\infty}\Pr[\mathcal{E}_n^c]\geq 1-\sum_{n=1}^{\infty}\eps/(2n^2)\geq 1-\eps.
\]
We now condition on the event $\mathcal{E}$ happens.
Then, from \Cref{claim:Bound1} and \Cref{claim:Bound2}, we know that the value between $\VI_{\mathcal{I}}$ and $\wh{V}^{\mathrm{Prime}}$, as well as $\VD_{\mathcal{I}, \mathcal{J}}$ and $\wh{V}_{\mathcal{I}, \mathcal{J}}$, for any $\mathcal{I}\subset[m_1]$ and $\mathcal{J}\subset[m_2]$, is upper bounded by $\Rad(n/m,\eps/(2n^2m))$. This result further implies that the following inequalities
\[
|\delta_1-\wh{\delta}_1|\leq 2\cdot\Rad(n/m,\eps/(2n^2m)) \text{~~and~~} |\delta_2-\wh{\delta}_2|\leq 2\cdot\Rad(n/m,\eps/(2n^2m)).
\]
Recall the definition that 
\[
\delta=\min\{\delta_1, \delta_2\} \text{~~and~~}\wh{\delta}=\min\{\wh{\delta}_1, \wh{\delta}_2\}. 
\]
Without loss of generality, we let $\delta=\delta_1$, then, conditional on the event $\mathcal{E}$ happens, we know that
\[
\wh{\delta}\leq \wh{\delta}_1 \leq \delta_1+2\cdot\Rad(n/m,\eps/(2n^2m))=\delta+ 2\cdot\Rad(n/m,\eps/(2n^2m)). 
\]
Moreover, note that 
\[
\wh{\delta}_1\geq\delta_1-2\cdot\Rad(n/m,\eps/(2n^2m))\geq\delta-2\cdot\Rad(n/m,\eps/(2n^2m))
\]
and
\[
\wh{\delta}_2\geq\delta_2-2\cdot\Rad(n/m,\eps/(2n^2m))\geq\delta-2\cdot\Rad(n/m,\eps/(2n^2m)).
\]
We have that
\[
\wh{\delta}\geq\delta-2\cdot\Rad(n/m,\eps/(2n^2m))
\]
conditional on the event $\mathcal{E}$ happens.
Thus, it holds that
\[
\Pr\left[|\delta-\wh{\delta}|\geq 2\cdot\Rad(n/m,\eps/(2n^2m))\right] \leq \eps.
\]
Therefore, as long as 
\[
\wh{\delta}\geq4\cdot\Rad(n/m, \eps/(2n^2m)),
\]
we know that 
\[\frac{\wh{\delta}}{2}\leq\delta\leq2\wh{\delta}
\]
with a probability at least $1-\eps$. On the other hand, in order to bound $N_3$, we know that with a probability at least $1-\eps$, it holds that
\[
\delta\leq \wh{\delta}+2\cdot\Rad(N_3/m, \eps/(2N_3^2m)).
\]
The definition of $N_3$ implies that $\wh{\delta}= 4\cdot\Rad(N_3/m, \eps/(2n_3^2m))$, 
which implies that
\[
\delta\leq 6\cdot\Rad(N_3/m, \eps/(2N_3^2m))
\]
and thus
\[
N_3\leq \order\left(\frac{m}{\delta^2}\cdot\log(2m/\eps)\right).
\]
Our proof is thus completed.

\subsection{Proof of \Cref{thm:BoundN1}}
Note that following standard Hoeffding inequality, for any $\eps>0$, we know that the absolute value of the matrix $\Delta A= A_{\mathcal{I}, \mathcal{J}} - \wh{A}_{\mathcal{I}, \mathcal{J}}$ is upper bounded by $\Rad(n/(d')^2, \eps/(2n^2(d')^2))$ with a probability at least $1-\eps/(2n^2)$. Then following Theorem 1 of \cite{stewart1998perturbation}, we know that the difference of the corresponding singular values of $\begin{bmatrix}
A^\top_{\mathcal{I}, \mathcal{J}}, & -\bm{e}^{|\mathcal{J}|} \\
 (\bm{e}^{|\mathcal{I}|})^\top & 0
\end{bmatrix}$ and $\begin{bmatrix}
\wh{A}^\top_{\mathcal{I}, \mathcal{J}}, & -\bm{e}^{|\mathcal{J}|} \\
 (\bm{e}^{|\mathcal{I}|})^\top & 0
\end{bmatrix}$ (sorted from the largest to the smallest) is upper bounded by $\|\Delta A\|_2$, which is again upper bounded by $d\cdot\Rad(n/(d')^2, \eps/(2n^2(d')^2))$, with a probability at least $1-\eps/(2n^2)$. We denote this event by $\mathcal{E}_n$. Formally, it holds that
\[
    \Pr[\mathcal{E}_n]=\Pr\left[ \left| \sigma_{\calI, \calJ}-\wh{\sigma} \right| \geq d'\cdot\Rad(n/(d')^2, \eps/(2n^2(d')^2)) \right] \leq \eps/(2n^2).
\]
We further denote by
\[
\mathcal{E} = \cap_{n=1}^{\infty}\mathcal{E}_n
\]
and from the union bound, it holds that
\[
\Pr[\mathcal{E}] \geq  1-\sum_{n=1}^{\infty}\Pr[\mathcal{E}_n^c]\geq 1-\sum_{n=1}^{\infty}\eps/(2n^2)\geq 1-\eps.
\]
We now condition on the event $\mathcal{E}$ happens.
Then, as long as 
\[
\wh{\sigma}\geq 2d'\cdot\Rad(n/(d')^2, \eps/(2n^2(d')^2)),
\]
we know that 
\[
\frac{\wh{\sigma}}{2}\leq\sigma_{\calI, \calJ}\leq2\wh{\sigma}
\]
with a probability at least $1-\eps$. On the other hand, in order to bound $N_4$, we know that with a probability at least $1-\eps$, we have
\[
\sigma_{\calI, \calJ}\leq \wh{\sigma}+d'\cdot\Rad(N_4/(d')^2,\eps/(2N_4^2(d')^2)),
\]
which implies that 
\[
N_4\leq \order\left(\frac{(d')^4}{\sigma_{\calI, \calJ}^2}\cdot\log(2(d')^2/\eps)\right).
\]
Our proof is thus completed.

\subsection{Oracle Algorithm for Solving the Problem in \Cref{eqn:012701}}\label{sec:oracle}
We consider the general LP of the form:
\[
V = \min~ \bm{c}_0^\top\bx~~~\mbox{s.t.}~A_0\bx\preceq \bm{b}_0, ~\bm{0}\preceq \bx\preceq \bm{1}, 
\]
where the decision variable $\bx\in\mathbb{R}^{m_0}$. For any set $\calI\subset[m_0]$, we also denote by 
\[
V_{\calI} = \min~ \bm{c}_0^\top\bx~~~\mbox{s.t.}~A_0\bx\preceq \bm{b}_0, ~\bx_{\calI}=\bm{0}, ~\bm{0}\preceq \bx\preceq \bm{1}. 
\]
We now provide one potential oracle algorithm that can solve the following problem
\begin{equation}\label{eqn:071001}
  V_{\delta} =  \min_{\mathcal{I}\subset[m_0]} \{ V_{\mathcal{I}} - V: V_{\mathcal{I}} - V > 0  \}
\end{equation}
as long as we are given an $\eps>0$ such that $\eps< V_{\delta}$. In practice, the value $\eps$ can be set as the desired accuracy level for computing $V_{\delta}$ or just be a sufficiently small value. To be specific, we can solve the following Mixed Integer Programming (MIP) problem to determine the value of $V_{\delta}$. 

\begin{equation}\label{eqn:MIP}
\begin{aligned}
V_{\delta}= \min~~&\bm{c}_0^\top\bx -V \\
\mbox{s.t.}~~ &\bm{c}_0^\top\bx-V\geq \eps\\
&A_0\bx\preceq \bm{b}_0\\
& \bm{z}\succeq\bm{x} \\
& \sum_{i=1}^{m_0} z_i \;\le\; m_0-1,\\
&\bm{z}\in\{0,1\}^{m_0}, \bm{x}\succeq\mathbf{0}.
\end{aligned}
\end{equation}
The above MIP problem can be solved by calling the modern LP solver.

\end{APPENDICES}

\end{document}